\newcommand{\torsten}[1]{\todo[inline,color=yellow]{\textbf{Torsten:} #1}}
\newtheorem{theorem}{Theorem}
\newtheorem{lemma}[theorem]{Lemma}
\newtheorem{conjecture}[theorem]{Conjecture}
\let\old@setaddresses\@setaddresses
\def\@setaddresses{\bigskip{\parindent 0pt\let\scshape\relax\let\ttfamily\relax\old@setaddresses}}
\title{Flips in colorful triangulations}
\author{Rohan Acharya}
\address[Rohan Acharya]{Department of Computer Science, University of Warwick, United Kingdom}
\email{rohan.acharya@warwick.ac.uk}
\author{Torsten M\"utze}
\address[Torsten M\"utze]{Institut f\"ur Mathematik, Universit\"at Kassel, Kassel, Germany \& Department of Theoretical Computer Science and Mathematical Logic, Charles University, Prague, Czech Republic}
\email{tmuetze@mathematik.uni-kassel.de}
\author{Francesco Verciani}
\address[Francesco Verciani]{Institut f\"ur Mathematik, Universit\"at Kassel, Kassel, Germany}
\email{francesco.verciani@uni-kassel.de}
\thanks{An extended abstract of this work was accepted for presentation at the 32nd International Symposium on Graph Drawing and Network Visualization (Graph Drawing 2024).}
\thanks{This work was supported by Czech Science Foundation grant GA~22-15272S. The second and third author participated in the workshop `Combinatorics, Algorithms and Geometry' in March 2024, which was funded by German Science Foundation grant~522790373.}
\begin{document}
\begin{abstract}
The associahedron is the graph~$\cG_N$ that has as nodes all triangulations of a convex $N$-gon, and an edge between any two triangulations that differ in a flip operation.
A \emph{flip} removes an edge shared by two triangles and replaces it by the other diagonal of the resulting 4-gon.
In this paper, we consider a large collection of induced subgraphs of~$\cG_N$ obtained by Ramsey-type colorability properties.
Specifically, coloring the points of the $N$-gon red and blue alternatingly, we consider only \emph{colorful} triangulations, namely triangulations in which every triangle has points in both colors, i.e., monochromatic triangles are forbidden.
The resulting induced subgraph of~$\cG_N$ on colorful triangulations is denoted by~$\cF_N$.
We prove that~$\cF_N$ has a Hamilton cycle for all~$N\geq 8$, resolving a problem raised by Sagan, i.e., all colorful triangulations on $N$ points can be listed so that any two cyclically consecutive triangulations differ in a flip.
In fact, we prove that for an arbitrary fixed coloring pattern of the $N$ points with at least 10 changes of color, the resulting subgraph of~$\cG_N$ on colorful triangulations (for that coloring pattern) admits a Hamilton cycle.
We also provide an efficient algorithm for computing a Hamilton path in~$\cF_N$ that runs in time~$\cO(1)$ on average per generated node.
This algorithm is based on a new and algorithmic construction of a tree rotation Gray code for listing all $n$-vertex $k$-ary trees that runs in time~$\cO(k)$ on average per generated tree.
\end{abstract}

\maketitle

\section{Introduction}

The \defi{associahedron} is a polytope of fundamental interest and importance~\cite{MR2108555,MR2321739,MR3437894}, as it lies at the heart of many recent developments in algebraic combinatorics and discrete geometry; see~\cite{MR4675114} and the references therein.
In this paper we are specifically interested in its combinatorial structure, namely the graph of its skeleton; see Figure~\ref{fig:asso}.
This graph, which we denote by~$\cG_N$, has as nodes all triangulations of a convex $N$-gon ($N\geq 3$), and an edge between any two triangulations that differ in a \defi{flip} operation, which consists of removing an edge shared by two triangles and replacing it by the other diagonal of the resulting 4-gon.
The graph~$\cG_N$ is isomorphic to the graph that has as nodes all binary trees with $N-2$ vertices, and an edge between any two trees that differ in a tree rotation.
Each binary tree arises as the geometric dual of a triangulation, i.e., we place a vertex into every triangle, connect vertices in adjacent triangles by an edge, and the root of the tree is given by `looking through' a fixed outer edge; see Figure~\ref{fig:asso}.
Under this bijection between triangulations and binary trees, flips translate to tree rotations; see Figure~\ref{fig:flip2}.
This dual point of view between triangulations and binary trees, and more generally, between dissections into $(k+1)$-gons and $k$-ary trees, is an essential tool used in our paper.

\begin{figure}[t!]
\includegraphics[page=1]{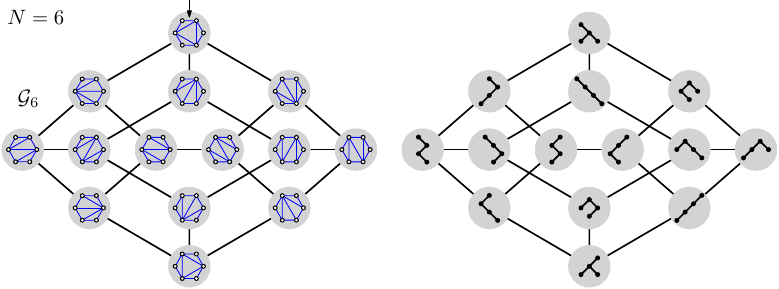}
\caption{The graph of the 3-dimensional associahedron.
The top edge of each triangulation is the outer edge that determines the root of the corresponding binary tree (see little arrow).}
\label{fig:asso}
\end{figure}

Properties of the graph~$\cG_N$ have been the subject of extensive investigations in the literature.
Most prominently, the diameter of~$\cG_N$ was shown to be~$2N-10$ for all $N>12$~\cite{MR928904,MR3197650}.
Furthermore, the graph~$\cG_N$ is regular with degree~$N-3$, and this number is also its connectivity~\cite{MR1723053}.
The chromatic number of~$\cG_N$ is at most~$\cO(\log N)$~\cite{MR2535071,berry-et-al:18}, while the best known lower bound is only~4.

\begin{wrapfigure}{r}{0.45\textwidth}
\centering
\includegraphics[page=2]{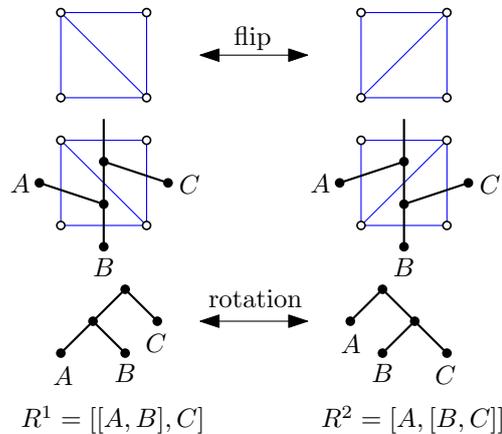}
\caption{Correspondence between flips in triangulations (top) and rotations in binary trees (bottom).}
\label{fig:flip2}
\end{wrapfigure}
Another fundamental graph property that we focus on in this paper is Hamiltonicity.
To this end, Lucas~\cite{MR920505} first proved that $\cG_N$ admits a Hamilton cycle for~$N\geq 5$, and a short proof was given by Hurtado and Noy~\cite{MR1723053}.
A Hamilton path in~$\cG_N$ can be computed efficiently and yields a Gray code ordering of all binary trees by rotations~\cite{MR1239499}.
This algorithm is a special case of the more general Hartung-Hoang-M\"utze-Williams permutation language framework~\cite{MR4391718,MR4344032,MR4598046,DBLP:journals/talg/CardinalMM25}.

In this paper, we consider a large collection of induced subgraphs of~$\cG_N$ obtained by Ramsey-type colorability properties.
This line of inquiry was initiated by Sagan~\cite{MR2426410}, following a sequence of problems posed by Propp on a mailing list in~2003.
Specifically, we label the points of the convex $N$-gon by $1,\ldots,N$ in counterclockwise order, and we color them red (\tr) and blue (\tb) alternatingly.
It follows that point~$i$ is colored red if $i$ is odd and blue if $i$ is even.
Note that for even~$N$, any two neighboring points have opposite colors, whereas for odd~$N$ this property is violated for the first and last point, which are both red.

\begin{figure}[h!]
\includegraphics[page=1]{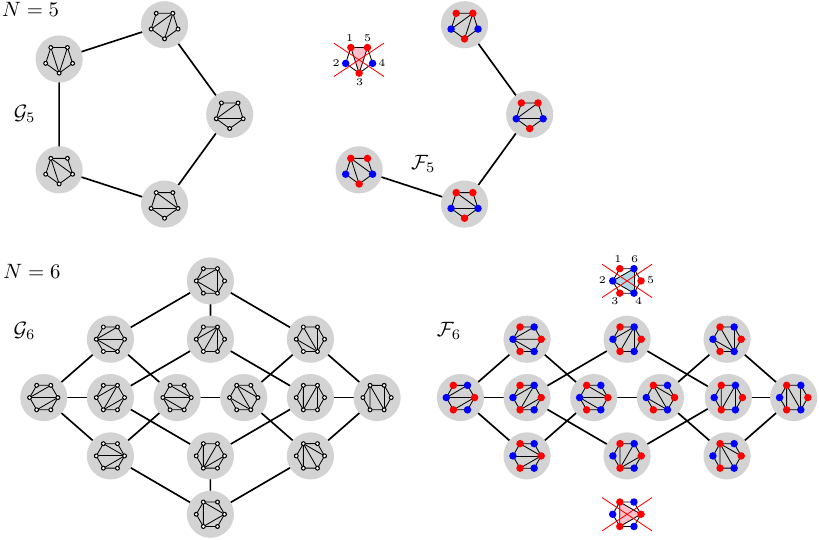}
\caption{Induced subgraphs of the associahedron~$\cG_5$ (top left) and~$\cG_6$ (bottom left) obtained for the coloring sequence $\tr\tb\tr\tb\cdots$ by forbidding monochromatic triangles.
The triangulations with monochromatic triangles are still shown, but they are not part of the graphs~$\cF_5$ and~$\cF_6$ (top right and bottom right, respectively) and hence crossed out.
}
\label{fig:f56}
\end{figure}

We say that a triangulation is \defi{colorful} if every triangle has points of both colors, i.e., no triangles in which all three points have the same color.
We write~$\cF_N$ for the subgraph of~$\cG_N$ induced by all colorful triangulations.
In other words, $\cF_N$ is obtained from~$\cG_N$ by deleting all triangulations that have a monochromatic triangle; see Figure~\ref{fig:f56}.

\subsection{Sagan's problem and its generalization}

Sagan~\cite{MR2426410} proved that~$\cF_N$ is a connected graph, and he asked [personal communication] whether~$\cF_N$ admits a Hamilton path or cycle.
Looking at the first two interesting instances~$N=5$ and~$N=6$ in Figure~\ref{fig:f56}, we note that~$\cF_5$ has a Hamilton path, but no cycle, and~$\cF_6$ has no Hamilton path and hence no cycle either.
Furthermore, $\cF_7$ admits a Hamilton path (see Figure~\ref{fig:F4567}), but no Hamilton cycle, which seems rather curious (cf.\ Theorem~\ref{thm:a11111} below).
We prove the following result.

\begin{theorem}
\label{thm:FN-ham}
For any $N\geq 8$, the graph~$\cF_N$ has a Hamilton cycle.
\end{theorem}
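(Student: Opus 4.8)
The plan is to derive Theorem~\ref{thm:FN-ham} from the stronger statement announced in the abstract: for \emph{every} two-coloring of the vertices of a convex polygon with at least ten changes of color around the boundary, the flip graph on the colorful triangulations for that coloring has a Hamilton cycle. The alternating pattern on $N$ points has $N$ (even $N$) or $N-1$ (odd $N$) color changes, so this covers all $N\geq 11$; and, unlike the alternating pattern, the class of ``sufficiently colorful'' colorings is closed under cutting a polygon along a diagonal, which is what makes an induction possible. The few remaining cases — $N=8,9$ and a small safety margin of further values — I would settle by a direct computer search, $\cF_N$ still being of moderate size; this also records that $\cF_5,\cF_6,\cF_7$ genuinely have no Hamilton cycle, so the threshold $N\geq 8$ is best possible. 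Thus the substance is the general colored statement, which I would prove by induction on the number of vertices.

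The heart of the inductive step is to slice the colorful flip graph by a local feature. Fix one boundary edge $\{p,q\}$ and sort the colorful triangulations according to the apex $r$ of the unique triangle $\{p,q,r\}$ incident to that edge; the admissible apexes are exactly the vertices $r$ for which $\{p,q,r\}$ is non-monochromatic, and for the alternating pattern there are many of them (almost all vertices). For a fixed admissible $r$, the diagonals $\{p,r\}$ and $\{r,q\}$ cut the polygon into two smaller polygons $A$ and $B$, each with an inherited coloring; since a triangle is monochromatic inside $A$ or $B$ precisely when it is monochromatic in the whole polygon, the slice of apex $r$ is the product $\cF_A\times\cF_B$ of the two colorful flip graphs. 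Finally, flipping the diagonal $\{p,r\}$ (or $\{r,q\}$), whenever the resulting quadrilateral stays colorful, changes the apex, so neighboring slices are joined by many flips of the ambient graph.

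With this decomposition in hand the plan is: first, apply the induction hypothesis to $A$ and $B$; second, upgrade from Hamilton cycles in the factors to Hamilton paths between a rich prescribed family of endpoint pairs inside each product slice $\cF_A\times\cF_B$, via the standard mechanism by which a product of graphs — one factor being ``Hamilton-connected enough'' — has many such paths (the same device that yields a Hamiltonian prism); and third, concatenate the per-slice Hamilton paths along the cyclic sequence of admissible apexes, using the inter-slice flips as connectors, into a single Hamilton cycle of the whole graph. For steps two and three to have the endpoints they require, the right object to induct on is not ``$\cF$ has a Hamilton cycle'' but a strengthening of the form ``$\cF$ has Hamilton paths between every pair in a suitably rich prescribed family''; I would prove that stronger statement and obtain Theorem~\ref{thm:FN-ham} as a corollary.

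The hard part, and where the hypothesis ``at least ten color changes'' is spent, will be reconciling the recursion with the gluing. Cutting along a diagonal tends to destroy color changes, and a sub-polygon with too few of them need not satisfy the induction hypothesis — its colorful flip graph can be small, or can lack the prescribed-endpoint Hamilton paths (the failures of $\cF_5,\cF_6,\cF_7$ are exactly such obstructions writ small). So one must argue that either both sub-polygons of every slice again have enough color changes to feed the induction, or else the coloring is one of a bounded list of ``exceptional'' shapes — roughly, those with a long nearly-monochromatic arc hugging the cut — which are then handled by an explicit, ad hoc construction. On top of that sits the bookkeeping of keeping all the Hamilton-path endpoint conditions mutually compatible across every slice and every exceptional case, plus the (easy but unavoidable) separate treatment of degenerate slices in which $A$ or $B$ is a triangle or a quadrilateral, so that one factor is a single vertex or an edge and the slice must be threaded into the cycle by hand. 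I expect this layered case analysis, rather than any single clever trick, to be the bulk of the argument.
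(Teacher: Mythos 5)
Your plan is a Hurtado--Noy-style induction: fix a boundary edge $\{p,q\}$, slice the colorful flip graph by the apex $r$ of the triangle on that edge, observe that each slice is a Cartesian product $\cF_A\square\cF_B$ of the colorful flip graphs of the two sub-polygons, and glue. This is genuinely different from the paper, which never cuts the polygon at all: it deletes the monochromatic edges of a colorful triangulation to obtain a quadrangulation, observes that the $2^q$ triangulations over a fixed quadrangulation induce a copy of the hypercube $Q_q$ in $\cF_N$, identifies the reduced graph on quadrangulations with the rotation graph of ternary trees $\cG_{N,4}$, and lifts a maximum-degree-$3$ spanning tree of that reduced graph to a Hamilton cycle of $\cF_N$ using the fact that $Q_d$ has a Hamilton cycle through any $\le 2d-3$ prescribed edges forming disjoint paths. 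The cases $N=8,9$ are then done by hand from a Hamilton path in $\cF_8'$, not by computer search.

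The gap in your route is the treatment of the sub-polygons. When you cut a coloring with many color changes along the diagonals $\{p,r\}$ and $\{r,q\}$, the inherited colorings of $A$ and $B$ are essentially arbitrary, and the colorings for which the colorful flip graph is badly non-Hamiltonian are \emph{not} ``a bounded list of exceptional shapes'': they form infinite families. The paper's Theorems~\ref{thm:comb}, \ref{thm:grid} and~\ref{thm:a11111} show, for instance, that $\cF_{(a,b)}$ never has a Hamilton cycle for $a,b\ge 2$ and often has no Hamilton path (it is the adjacent-transposition graph of $(a-1,b-1)$-combinations, with two degree-$1$ nodes), and that $\cF_{(a,1,b,1)}$ is an $a\times b$ grid with a pendant edge at every node, hence has no Hamilton path for $ab\ge 3$. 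Sub-polygons with exactly these coloring patterns arise for infinitely many slices of perfectly legitimate ambient colorings (e.g.\ cut off a block $\tr^a\tb^b$), so ``apply the induction hypothesis to $A$ and $B$'' fails on an unbounded set of slices, and the fallback ``ad hoc construction for finitely many exceptions'' does not exist. To salvage the plan you would need a mechanism that makes the product $\cF_A\square\cF_B$ traversable even when a factor has no Hamilton path (degree-$1$ vertices in a factor are not fatal in a Cartesian product, but your stated device of upgrading Hamilton cycles in the factors presupposes they have them), together with endpoint control in each slice; none of this is supplied, and it is precisely the difficulty the paper's hypercube decomposition is designed to avoid, since there the ``fibers'' are always hypercubes regardless of the coloring. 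Separately, the claim that the alternating pattern on odd $N$ has $N-1$ color changes is off (it has $N-1$ changes along the path $1,\dots,N$ but the relevant count in the paper's normal form is $\ell=N-1$ for the sequence $(2,1^{N-2})$, which covers $N\ge 11$ as you say), so the reduction of Theorem~\ref{thm:FN-ham} to the general statement for $N\ge 11$ is fine; the substance of the gap is entirely in the inductive step for the general statement.
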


The resolution of Sagan's question immediately gives rise to the following more general problem:
We consider an arbitrary sequence~$\alpha$ of coloring the points~$1,\ldots,N$ red or blue, and let~$\cF_\alpha$ be the corresponding induced subgraph of~$\cG_N$ obtained by forbidding monochromatic triangles.
For which sequences~$\alpha$ does~$\cF_\alpha$ admit a Hamilton path or cycle?

Formally, a \defi{coloring sequence} is a sequence $\alpha=(\alpha_1,\ldots,\alpha_\ell)$ of even length $\ell\geq 2$ with $\alpha_i\geq 1$ for $i=1,\ldots,\ell$, and it encodes the coloring pattern
\begin{equation}
\label{eq:col-pat}
\tr^{\alpha_1}\tb^{\alpha_2}\tr^{\alpha_3}\tb^{\alpha_4}\cdots \tr^{\alpha_{\ell-1}}\tb^{\alpha_\ell}
\end{equation}
for the points~$1,\ldots,N$, where $N=\sum_{i=1}^\ell \alpha_i$, and $\tr^{\alpha_i}$ and $\tb^{\alpha_j}$ denote $\alpha_i$-fold and $\alpha_j$-fold repetition of red and blue, respectively.
In words, the first~$\alpha_1$ many points are colored red, the next~$\alpha_2$ many points are colored blue, the next $\alpha_3$ many points are colored red etc.
Clearly, the special cases considered by Sagan are $\alpha_1=\alpha_2=\cdots=\alpha_\ell=1$ for even $N=\ell$, or $\alpha_1=2$ and $\alpha_2=\cdots=\alpha_\ell=1$ for odd $N=\ell+1$, respectively (in the second case, the two consecutive points of the same color are~$1$ and~$2$ instead of~1 and~$N$ as before, but this is only a cyclic shift of indices).
We let $\cF_\alpha$ be the induced subgraph of~$\cG_N$ induced by the colorful triangulations with coloring sequence~$\alpha$.

We provide the following generalization of Theorem~\ref{thm:FN-ham} before.
Specifically, our next theorem applies to all coloring patterns with at least 10 changes of colors.

\begin{theorem}
\label{thm:Falpha-ham}
For any coloring sequence $\alpha=(\alpha_1,\ldots,\alpha_\ell)$ of (even) length~$\ell\geq 10$, the graph~$\cF_\alpha$ has a Hamilton cycle.
\end{theorem}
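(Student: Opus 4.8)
The plan is to adapt the classical fan decomposition of triangulations with respect to a fixed boundary edge to the colorful setting. Since $\ell\geq 2$, the polygon has a bichromatic boundary edge $e=\{a,a+1\}$ — for instance between the last vertex of the block $B_1$ and the first vertex of $B_2$ — and I would fix one such $e$ (chosen carefully; see below). For a colorful triangulation $T$, the triangle of $T$ containing $e$ has some apex $v(T)$, and it is automatically colorful because $e$ is bichromatic; thus the only obstruction to a given value of $v$ occurring is that one of the two chords $\{a,v\},\{a+1,v\}$ is block-internal, equivalently that one of the two sub-polygons cut off by the triangle $\{a,a+1,v\}$ is monochromatic. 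Grouping colorful triangulations by their apex, the fiber $\mathcal A_v=\{T\in\cF_\alpha: v(T)=v\}$ is identified, by restricting $T$ to the two sub-polygons, with a Cartesian product $\cF_{\beta(v)}\times\cF_{\gamma(v)}$ of colorful flip graphs of the two smaller sub-polygons (each inheriting a coloring from $\alpha$), where a degenerate sub-polygon — an edge, or a single forbidden-free triangle — contributes a one-vertex factor.

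Two facts would drive the construction. First, flipping a diagonal of the apex triangle $\{a,a+1,v\}$ produces a triangulation whose new apex is the apex, inside the relevant sub-polygon, of that diagonal; hence distinct fibers are joined by single flips in a way governed by the recursive structure of the two sides, exactly as in the classical proofs of Hamiltonicity of $\cG_N$ \cite{MR920505,MR1723053}. Second, a Cartesian product $A\times B$ in which $B$ carries a Hamilton cycle has a Hamilton path between any two vertices agreeing in the $A$-coordinate, and more generally a flexible supply of Hamilton paths; so it suffices that, for every occurring apex $v$, at least one of $\cF_{\beta(v)},\cF_{\gamma(v)}$ is Hamiltonian, the other being swallowed into a flexible product. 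The argument would then be an induction on the number of vertices: I would single out a class of \emph{good} colored polygons (enough color changes, plus whatever parity side-conditions the gluing turns out to force) and prove that every good polygon's colorful flip graph has a Hamilton cycle together with a rich enough family of Hamilton paths with prescribed endpoints to serve as entry/exit data in a larger decomposition. In the inductive step I would place $e$ roughly in the middle of the block sequence so that, for \emph{every} occurring apex $v$, the larger of the two sub-polygons is good — this is exactly where the hypothesis $\ell\geq 10$ is spent, since $10$ color changes force the large side of every apex split to retain enough of them — then apply the induction hypothesis there, form the flexible product, and thread a Hamilton cycle through the fibers $\mathcal A_v$ in the order dictated by the recursion, stitching consecutive fibers by the flips above and the prescribed-endpoint Hamilton paths.

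The hard part will be the bookkeeping rather than the scheme. Plain Hamiltonicity does not propagate through the decomposition: Cartesian products of two graphs that have only Hamilton paths can fail to have a Hamilton cycle (a $3\times 3$ grid), and this family has genuine parity obstructions — $\cF_6$ has no Hamilton path and $\cF_7$ has one but no cycle — so I would have to isolate the precise ``flexible Hamilton path'' invariant that survives both the product operation and the fiber gluing, and then verify that the bad small instances ($\cF_6$- and $\cF_7$-type colored polygons, together with the assorted degenerate pieces) can only arise as the \emph{non-good} factor paired with a good, cycle-carrying factor, hence never obstruct. Additional care is needed near the boundary blocks $B_1,B_\ell$, where block-internal chords thin out the set of occurring apexes, and in checking that consecutive fibers in the chosen recursive order are always joined by an \emph{available} flip compatible with the prescribed entry and exit triangulations. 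I expect pinning down this invariant and the gluing order — and calibrating how many color changes make the ``large side is always good'' guarantee robust (the reason the threshold is $10$ rather than, say, $8$) — to be the bulk of the work, with the finitely many base cases dispatched by a computer search.
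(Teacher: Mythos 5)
Your scheme (fix a bichromatic boundary edge, group colorful triangulations by the apex of its triangle, identify each fiber with a product of two smaller colorful flip graphs, and thread a cycle through the fibers) is a genuinely different route from the paper's, but it has a concrete gap that I do not think can be repaired in the form you describe. The key inductive claim --- that because $\ell\geq 10$, for \emph{every} occurring apex $v$ the larger of the two sub-polygons is ``good'' --- is false: the $\ell$ color changes are split between the two sides of the triangle $\{a,a+1,v\}$, so an apex sitting near the middle of the block sequence leaves each side with only about $\ell/2$ changes. With $\ell=10$ both inherited coloring sequences can have length at most $6$, and these short sequences land squarely in the families the paper shows are genuinely obstructed: $\cF_{(a,b)}$ with $a,b\geq 3$ has no Hamilton cycle (two degree-one nodes), $\cF_{(a,1,b,1)}$ has no Hamilton path at all for $a\cdot b\geq 3$, and $\cF_{(a,1,1,1,1,1)}$ never has a Hamilton cycle. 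So your requirement that ``for every occurring apex at least one factor is Hamiltonian'' fails for many fibers, and no amount of raising the threshold helps, since a balanced apex always halves the number of color changes. The remaining difficulties you flag honestly --- the precise ``flexible Hamilton path'' invariant surviving both the product and the gluing, and the parity obstructions visible already in $\cF_6,\cF_7$ --- are exactly the content of the theorem and are left unspecified, so as it stands this is a plan rather than a proof.

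For comparison, the paper sidesteps the apex decomposition entirely. It deletes the monochromatic \emph{inner} edges of a colorful triangulation, obtaining an $\alpha$-angulation (a dissection into $q=(\ell-2)/2$ quadrangles and $N-\ell$ triangles); the fiber over each $\alpha$-angulation is a copy of the hypercube $Q_q$ (one independent diagonal choice per quadrangle), which is a far more flexible building block than your products of smaller $\cF$-graphs. A Hamilton cycle in $\cF_\alpha$ is then obtained by ``uncompressing'' a spanning tree of maximum degree at most $3$ of the reduced flip graph $\cF_\alpha'$, using the fact that a hypercube of dimension $q\geq 4$ has a Hamilton cycle through any prescribed vertex-disjoint paths of few edges (Lemma~\ref{lem:edge-ham}); this is where $\ell\geq 10$, i.e.\ $q\geq 4$, is actually spent. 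The low-degree spanning tree of $\cF_\alpha'$ is produced not by induction on sub-polygons but by starting from the alternating sequence $\beta=1^\ell$, where $\cF_\beta'\simeq\cG_{\ell,4}$ is the ternary-tree rotation graph and Theorem~\ref{thm:GNk-ham} gives a Hamilton path, and then inflating each repeated color via Lemma~\ref{lem:zigzag}, whose fibers are the graphs $G(a,d)$ of Lemma~\ref{lem:Gad}. If you want to salvage an apex-based induction, you would first need an invariant strong enough to absorb non-Hamiltonian factors of unbounded size, which is precisely what the hypercube fibration gives for free.
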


Note that there are~$2^{N-2}$ different coloring sequences satisfying the conditions of the theorem, i.e., there are exponentially many subgraphs of the associahedron to which Theorem~\ref{thm:Falpha-ham} applies.
This also shows that the associahedron has cycles of many different lengths.

In view of the last theorem, it remains to consider short coloring sequences, i.e., sequences of length~$\ell\leq 8$.
We offer three simple observations in this regime.
We first consider the easiest case~$\ell=2$, i.e., the coloring sequence has the form~$\alpha=(a,b)$.
The resulting graph~$\cF_\alpha$ for $\alpha=(4,4)$ is shown in Figure~\ref{fig:comb}.
Another way to think about such a triangulation is as a triangulation of the so-called double-chain, where each triangle has to touch both chains.
We observe that the number of colorful triangulations in this case is $\binom{N-2}{a-1}=\binom{N-2}{b-1}$ where $N:=a+b$.
Moreover, these triangulations are in bijection with bitstrings of length~$N-2$ with $a-1$ many 0s and $b-1$ many 1s, so-called \defi{$(a-1,b-1)$-combinations}.
This bijection is defined as follows; see Figure~\ref{fig:comb}:
Given a triangulation, we consider a ray separating the red from the blue points, and we record the types of triangles intersected by this ray one after the other, specifically we record a 1-bit or 0-bit if the majority color of the three triangle points is red and blue, respectively.
We see that flips in the triangulations correspond to adjacent transpositions in the corresponding bitstrings.
In the following, we use the generic term \defi{flip graph} for any graph that has as nodes a set of combinatorial objects, and an edge between any two objects that differ in a certain change operation.
From what we said before, it follows that $\cF_{(a,b)}$ is isomorphic to the flip graph of $(a-1,b-1)$-combinations under adjacent transpositions.
We can thus apply known results from~\cite{MR737262,MR821383} to obtain the following theorem.

\begin{figure}[h!]
\includegraphics[page=8]{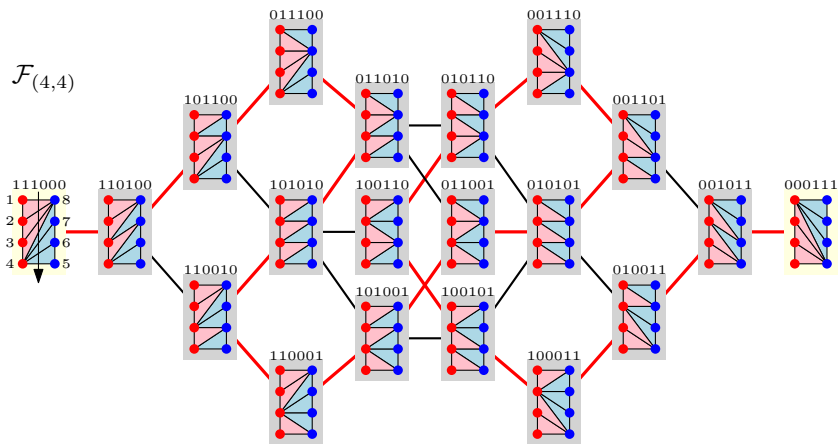}
\caption{Flip graph of colorful triangulations for the coloring sequence $\alpha=(4,4)$ (\tr\tr\tr\tr\tb\tb\tb\tb), which is isomorphic to the flip graph of $(3,3)$-combinations under adjacent transpositions.
The black arrow in the leftmost triangulation is the ray that separates red from blue points, and the combination is obtained by reading the triangle types that intersect this ray from top to bottom (red=1, blue=0).
The nodes of degree~1 and a Hamilton path in the flip graph are highlighted.
}
\label{fig:comb}
\end{figure}

\begin{theorem}
\label{thm:comb}
For integers~$a,b\geq 1$ with $a+b\geq 3$, the graph~$\cF_{(a,b)}$ is isomorphic to the flip graph of $(a-1,b-1)$-combinations under adjacent transpositions.
Consequently, $\cF_{(a,b)}$ has a Hamilton path if and only if $a\in\{1,2\}$, or $b\in\{1,2\}$, or $a$ and~$b$ are both even.
Furthermore, if $a,b\geq 2$, then $\cF_{(a,b)}$ has no Hamilton cycle.
\end{theorem}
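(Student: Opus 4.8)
The plan is to reduce everything to the flip graph of $(a-1,b-1)$-combinations and then invoke the classical results of Eades–McKay~\cite{MR737262} and Buck–Wiedemann~\cite{MR821383} on Gray codes for combinations under adjacent transpositions. The first task is to make the bijection sketched above (and in Figure~\ref{fig:comb}) precise and to verify that it is an isomorphism of flip graphs. Concretely, fix the coloring pattern $\tr^a\tb^b$ on the points $1,\dots,N$ with $N=a+b$, and fix a directed ray~$\rho$ that enters the $N$-gon between points $N$ and $1$ (the red/blue boundary on that side) and exits between points $a$ and $a+1$ (the other red/blue boundary). In any colorful triangulation, every triangle has points of both colors, so the red points span a ``red fan'' region and the blue points a ``blue fan'' region meeting along a path of diagonals that~$\rho$ crosses transversally; the triangles met by~$\rho$, read in order, alternate in a controlled way and each is either red-majority or blue-majority. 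Recording $1$ for red-majority and $0$ for blue-majority yields a bitstring of length $N-2$ with exactly $a-1$ ones and $b-1$ zeros. I would prove this map is a bijection by exhibiting the inverse explicitly (a bitstring tells you how to build up the separating path of diagonals, hence the whole triangulation), and then check that a single flip changes exactly one triangle's majority color, which corresponds precisely to swapping one adjacent $01$ or $10$ pair in the bitstring — this gives the claimed flip-graph isomorphism $\cF_{(a,b)}\cong$ (combinations under adjacent transpositions).

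Given the isomorphism, the Hamiltonicity statements are citations. The Eades–McKay result says the flip graph of $(p,q)$-combinations under adjacent (``homogeneous'') transpositions has a Hamilton path whenever $p\le 1$ or $q\le 1$ or $p,q$ both even; I would translate this to $p=a-1$, $q=b-1$, giving the ``if'' directions $a\in\{1,2\}$, $b\in\{1,2\}$, or ($a$ and $b$ both even). The ``only if'' direction and the ``no Hamilton cycle'' statement both come from a parity/bipartiteness obstruction: assigning to a bitstring the number of inversions (pairs $i<j$ with bit~$i$ equal to $1$ and bit~$j$ equal to $0$, say) mod~$2$ is a proper $2$-coloring of the flip graph, so it is bipartite, and one computes the two side sizes. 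When $a,b\ge 2$ the two color classes have sizes differing by more than~$1$ in general — and are unequal whenever $\binom{a+b-2}{a-1}$ splits unevenly by inversion parity — which rules out a Hamilton path; more simply, a bipartite graph with unequal sides has no Hamilton cycle, and even when the sides are equal one checks (as in~\cite{MR821383}) that no Hamilton cycle exists for $a,b\ge 2$. I should be slightly careful here: bipartiteness alone kills Hamilton \emph{cycles} only when the parts are unequal, so for the cycle statement with $a,b\ge2$ I would cite the sharper non-Hamiltonicity analysis (the presence of the two degree-$1$ nodes highlighted in Figure~\ref{fig:comb} when $a,b\ge 2$, i.e.\ the all-$0^{b-1}1^{a-1}$-type and its reverse, already forces any spanning path/cycle to use both their incident edges, and a short argument shows a cycle is then impossible).

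The main obstacle is the geometric bijection, not the enumeration: one has to argue rigorously that ``colorful'' forces the clean red-fan/blue-fan structure with a single monotone separating path, so that the ray~$\rho$ really does meet exactly $N-2$ triangles in a well-defined linear order, and that this order is flip-covariant. The cleanest route is an induction on~$N$: an ear of a colorful triangulation is a triangle using two consecutive boundary edges, necessarily of the form $\tr\tb\tr$ or $\tb\tr\tb$ sitting at one of the two color boundaries (since a monochromatic ear is forbidden and the only places two same-colored points are adjacent are inside the red block or the blue block); removing such an ear reduces $(a,b)$ to $(a-1,b)$ or $(a,b-1)$ and, on the bitstring side, deletes the corresponding extreme $1$ or $0$. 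Once the bijection and its flip-covariance are nailed down, the degree-$1$ vertices of $\cF_{(a,b)}$ correspond exactly to the two combinations with all $1$s before all $0$s or vice versa, which is the hook for both the ``only if'' of the path statement and the ``no cycle'' statement, and everything else is bookkeeping plus the cited Gray-code theorems.
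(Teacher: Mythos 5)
Your proposal matches the paper's argument: the paper likewise defines the bijection by reading off the majority colors of the triangles crossed by a ray separating the red block from the blue block, observes that flips become adjacent transpositions, cites the same Gray-code results for combinations to get the Hamilton-path characterization, and derives the non-existence of a Hamilton cycle from the two degree-one nodes $1^{a-1}0^{b-1}$ and $0^{b-1}1^{a-1}$. Your additional material (the ear-removal induction for the bijection, the inversion-parity count for the ``only if'' direction) merely fills in details that the paper delegates to Figure~\ref{fig:comb} and to the cited references.
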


The reason for the non-existence of a Hamilton cycle is that~$\cF_{(a,b)}$ has two nodes of degree~1, corresponding to the combinations~$1^{a-1}0^{b-1}$ and~$0^{b-1}1^{a-1}$; see Figure~\ref{fig:comb}.

The next result is a simple observation for the special case of coloring sequences of length~$\ell=4$ with exactly two non-consecutive blue points; see Figure~\ref{fig:grid}.

\begin{theorem}
\label{thm:grid}
For integers $a,b\geq 1$, the graph~$\cF_{(a,1,b,1)}$ is isomorphic to an $a\times b$ rectangular grid with one pending edge attached to each node.
Consequently, it does not have a Hamilton path unless $a\cdot b\leq 2$.
\end{theorem}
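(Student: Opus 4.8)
The plan is to exhibit an explicit bijection between colorful triangulations with coloring sequence $\alpha=(a,1,b,1)$ and cells of an $a\times b$ grid, then show the flip graph structure matches the grid-with-pendant-edges claim, and finally invoke a counting/parity obstruction to rule out Hamilton paths once $a\cdot b\geq 3$.

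First I would set up the geometry. Label the points $1,\dots,N$ with $N=a+b+2$, so the red blocks are $\{1,\dots,a\}$ and $\{a+2,\dots,a+b+1\}$ and the two isolated blue points are $p:=a+1$ and $q:=a+b+2=N$. The key structural observation is that in a colorful triangulation every triangle must contain at least one blue point, so every triangle contains $p$ or $q$ (or both). Hence the diagonal $pq$ must be present (it separates the two red runs, and any triangulation restricted to either side must be fanned), and the triangulation decomposes into a fan from $p$ covering the red points $\{1,\dots,a\}$ together with $q$, and a fan from $q$ covering the red points $\{a+2,\dots,a+b+1\}$ together with $p$. Wait --- I should be more careful: a fan from $p$ over the polygon on points $p, p+1,\dots,q$ is forced only after we know no diagonal among reds is allowed, which follows because such a diagonal would cut off a monochromatic red triangle. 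So each side is a unique fan, and the triangulation is \emph{not} yet determined --- the remaining freedom is exactly where the ``ear'' triangles sit. I would recount: the triangulation on the $a+1$ points $p,1,2,\dots,a$ plus the edge to $q$... the cleanest description is that a colorful triangulation here corresponds to choosing a triangle $\{p,q,x\}$ for the edge $pq$ with $x$ red, then fanning; but that gives only $a+b$ choices, not $ab$. The correct picture (matching Figure~\ref{fig:grid}) must be: there are two ``flexible'' diagonals, one on each side, each of the form $p\,r_i$ (for $r_i$ ranging over $a$ choices of red point in the second block) and $q\,s_j$ (for $s_j$ ranging over $b$ choices in the first block), and these choices are independent, giving the $a\times b$ grid of interior nodes; the pendant edge at each node comes from a single additional flip that is available at every node but leads out of the grid.

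Concretely, the main step is: identify the unique diagonal $d_1$ incident to $p$ that is ``not forced'' and the unique diagonal $d_2$ incident to $q$ that is ``not forced,'' show flipping $d_1$ moves you along one grid direction (cycling through $a$ positions, but actually only $a-1$ flips connect $a$ positions in a path, which is why it's a grid and not a torus) and flipping $d_2$ moves along the other, and show these are the only two flips that keep the triangulation colorful --- \emph{except} for one more flip, available in every triangulation, namely flipping the diagonal $pq$ itself into $r s$ for the appropriate reds; this third flip is a ``pendant'' because the resulting triangulation has no further colorful flips other than flipping right back (its only colorful flip returns to the grid). Checking that $pq\to rs$ is colorful and that the resulting triangulation is a leaf requires verifying all its triangles meet a blue point and enumerating its flips; this is the bookkeeping-heavy part. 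So the degree of every grid-node is (its grid-degree) $+1$, and the degree of every pendant is $1$.

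The hard part will be making the ``exactly these flips are colorful'' claim airtight --- one must check that no other diagonal can be introduced without creating a monochromatic triangle, and that the fan structure on each side is genuinely rigid up to the one sliding diagonal. Once the graph is pinned down as the $a\times b$ grid with a leaf hanging off each vertex, the non-Hamiltonicity is immediate: a graph with $k\geq 2$ leaves has no Hamilton path when $k\geq 3$, and here the number of leaves equals $ab$, so $ab\geq 3$ already kills it; for $ab\le 2$ the grid is a path $P_1$ or $P_2$ and with pendants one gets $P_3$ or a path on $4$ vertices (for $1\times2$) or a star-like small graph --- in each case one checks directly whether a Hamilton path survives, yielding the stated threshold $ab\leq 2$. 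I would close by noting this also reproves the $\ell=4$, two-blue-points instances of the ``short sequence'' regime left open by Theorem~\ref{thm:Falpha-ham}.
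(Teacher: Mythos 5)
Your final picture is the correct one and is exactly what the paper has in mind (the paper states this as an observation, offering only Figure~\ref{fig:grid} and a one-line remark identifying the degree-one nodes): the $ab$ colorful triangulations containing the edge $pq$ between the two blue points form the grid, flipping $pq$ itself produces the $ab$ pendants, and a graph with $ab\geq 3$ leaves has no Hamilton path, while for $ab\leq 2$ the graph is a path on $2$ or $4$ nodes. However, two of your intermediate assertions are false and contradict your own later description, so they must not survive into a writeup. First, the diagonal $pq$ is \emph{not} forced: the $ab$ pendant triangulations are precisely the colorful triangulations in which $pq$ is absent, namely those consisting of a red--red chord $(x,y)$ with $x\in\{1,\dots,a\}$ and $y$ in the second red block, with the side containing $p$ fanned from $p$ and the side containing $q$ fanned from $q$. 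Second, ``a diagonal among reds would cut off a monochromatic triangle'' is only true for a diagonal joining two reds of the \emph{same} block; the cross-block chord $(x,y)$ just described cuts off nothing monochromatic and is exactly the diagonal present in every pendant node.

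For the grid nodes the cleanest parametrization is by the pair of apexes $(x,y)$ of the two triangles $\{p,q,x\}$ and $\{p,q,y\}$ sitting on the edge $pq$; the rest of each side is then forced (a fan from $q$ over $1,\dots,x$ and a fan from $p$ over $x,\dots,a$, and symmetrically on the other side), since any other configuration creates an all-red triangle. Note also that a single grid axis is not traversed by flipping ``the one free diagonal incident to $p$'': incrementing $x$ flips the diagonal $(p,x)$ while decrementing $x$ flips $(q,x)$, so both blue vertices participate in each axis. With these bookkeeping corrections, your enumeration of the colorful flips (the sliding moves on each side, plus the flip of $pq$ into $(x,y)$, and at a pendant only the flip of $(x,y)$ back) goes through and pins down the claimed isomorphism, after which the leaf count settles Hamiltonicity exactly as you say.
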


The nodes of degree~1 are the triangulations in which the two blue points are not connected by an edge, in which case the only possible flip restores this edge between the two blue points.

\begin{figure}[h!]
\includegraphics[page=9]{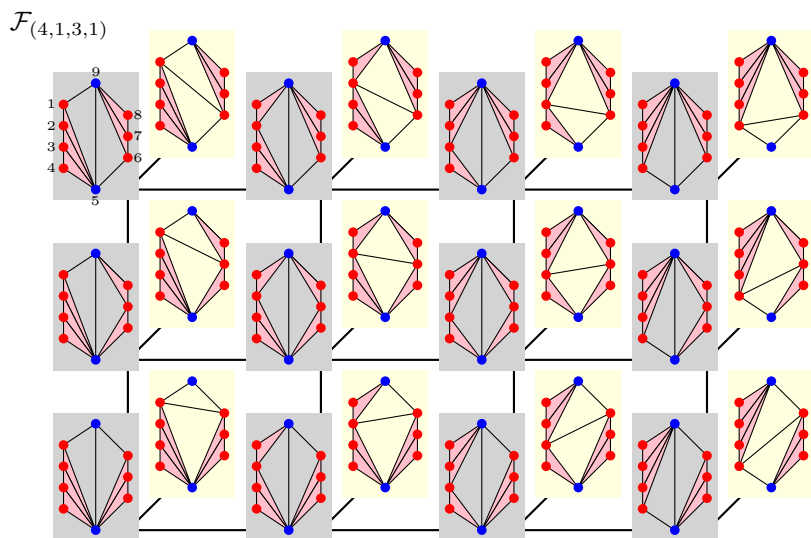}
\caption{Illustration of Theorem~\ref{thm:grid} for the coloring sequence~$\alpha=(4,1,3,1)$ ($\tr\tr\tr\tr\tb\tr\tr\tr\tb$).
The nodes of degree~1 in the flip graph are highlighted.}
\label{fig:grid}
\end{figure}

The last result is for coloring sequences of length $\ell=6$ and yields an infinite family of natural flip graphs that admit a Hamilton path but no Hamilton cycle, despite the fact that they have minimum degree~2; see Figure~\ref{fig:a11111}.

\begin{theorem}
\label{thm:a11111}
For $\alpha=(a,1,1,1,1,1)$, the graph~$\cF_\alpha$ has no Hamilton cycle for any~$a\geq 1$, but it has a Hamilton path for~$a=2$ and any~$a\geq 4$.
\end{theorem}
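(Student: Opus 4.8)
The plan is to describe $\cF_\alpha$ for $\alpha=(a,1,1,1,1,1)$ completely and then read off all of its Hamiltonicity properties. Write $N=a+5$ and let $b_1=a+1$, $b_2=a+3$, $b_3=a+5$ be the three blue points, $r_1=a+2$, $r_2=a+4$ the remaining red points, and $R=\{1,\dots,a\}$ the red block. Since the only monochromatic triangles available are the single blue triangle $b_1b_2b_3$ and the red triangles (three of the $a+2$ red points), a triangulation is colorful iff it contains no red triangle and does not contain $b_1b_2b_3$. The key invariant of a colorful triangulation $T$ is the set $C(T)$ of its diagonals joining two blue points, which is one of the seven subsets of $\{b_1b_2,b_2b_3,b_1b_3\}$ other than the full set (the full set would force $b_1b_2b_3$). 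For each of the seven values, cutting $T$ along $C(T)$ splits the $N$-gon into one or two ears carrying $r_1,r_2$, a bounded ``fringe'' polygon inside $\{a+1,\dots,a+5\}$, and at most one polygon containing the whole red block together with one or two blue points; by Theorems~\ref{thm:comb} and~\ref{thm:grid} this last factor is a path $P_a$ (the case $\cF_{(a,2)}$), a small grid, or $\cF_{(3,a)}$ (the flip graph of $(2,a-1)$-combinations), while flipping a blue--blue diagonal in or out moves between the seven classes. I would assemble this into a description of $\cF_\alpha$ as a bounded number of ``sheets'', each a Cartesian product of one such $a$-dependent factor with a bounded graph, glued along the blue--blue flips, confining all of the $N$-dependence to path or grid factors of unbounded size. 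A helpful auxiliary fact is that a diagonal of a colorful triangulation is unflippable exactly when the quadrilateral formed by its two incident triangles has precisely one blue vertex; this pins down the degree-$2$ vertices and the sheet boundaries.

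For the absence of a Hamilton cycle (all $a\ge1$), the obstruction is read off from this structure. For $a=1$ the graph $\cF_6$ (Figure~\ref{fig:f56}) has exactly six vertices of degree $2$, grouped into three ``gadget'' $4$-cycles; any Hamilton cycle must use all four edges incident to the two degree-$2$ vertices of a gadget and hence contains that $4$-cycle, which is impossible since $\cF_6$ has $12>4$ vertices. For $a\ge2$ the red block enlarges the graph, and the decomposition exposes a cut edge of $\cF_\alpha$, which rules out a Hamilton cycle while still allowing a Hamilton path, matching the behaviour of $\cF_7$ at $a=2$.

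For the existence of a Hamilton path when $a\notin\{1,3\}$, I would build the path sheet by sheet using the decomposition. Each sheet is a Cartesian product of a path (or a forgiving grid, or a combination flip graph) with a bounded graph, so it has Hamilton paths between many prescribed pairs of boundary vertices, and the blue--blue flips give enough room to concatenate these sub-paths; for $a\ge4$ the grid factors are large enough that this is routine snaking, and $a=2$ (the graph $\cF_7$) is a boundary instance handled by direct inspection. The case $a=1$ is excluded already above (three gadgets, only two possible path endpoints). For $a=3$ I expect a similar but more delicate obstruction, plausibly coming from the sheet $\cF_{(3,3)}$ (the flip graph of $(2,2)$-combinations), which by Theorem~\ref{thm:comb} has no Hamilton path and is small enough that its interface with the rest of $\cF_\alpha$ cannot compensate; lacking a uniform argument, this single residual instance can simply be checked directly.

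The main difficulty lies in the structural description: pinning down the seven-case decomposition and, above all, the gluing pattern, because the red block does not in general wall off from the fringe (a point of $R$ may be joined to $b_2$ by a diagonal), so the sheets genuinely overlap and their interfaces must be tracked with care. Once the structure is settled, the no-cycle claim is immediate and the Hamilton-path construction is routine apart from the three small checks $a\in\{1,2,3\}$.
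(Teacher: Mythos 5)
Your plan for the non-Hamiltonicity part has a genuine gap at its core: for $a\geq 2$ you claim that the decomposition ``exposes a cut edge of $\cF_\alpha$'', and you explicitly assert this matches the behaviour of $\cF_7$ at $a=2$. But $\cF_7$ has no cut edge. Using the hypercube decomposition (here $q=2$, so the cubes are $4$-cycles): the reduced graph $\cF_7'$ has $7$ nodes and consists of two triangles (the three dissections with triangle $\{1,2,3\}$, and the three with triangle $\{1,2,7\}$) joined by the edges corresponding to the hexagon flips $3\text{--}6$ and $4\text{--}7$ and by the path through the seventh dissection (triangle $\{1,2,5\}$); this graph is $2$-edge-connected, every $4$-cycle is $2$-edge-connected, and between adjacent cubes there is at least one connector edge, so $\cF_7$ is $2$-edge-connected and bridgeless. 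Hence your proposed obstruction simply does not exist in the very first case $a=2$, and you offer no substitute argument, so the statement ``no Hamilton cycle for all $a\geq 1$'' is not proved. The paper's obstruction is of a different nature: it exhibits two subgraphs $A$ and $B$ (Figure~\ref{fig:a11111}) where $B$ is bipartite with two partition classes of \emph{equal} size and is attached to $A$ only by edges leaving one of the two classes; a Hamilton cycle restricted to $B$ would decompose into spanning paths all of whose endpoints lie in that one class, which is impossible when the classes are balanced. This parity/bipartition idea is the key missing ingredient in your write-up (your $a=1$ argument via the three degree-$2$ ``gadget'' $4$-cycles in $\cF_6$ is fine, but it does not extend to $a\geq 2$).

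On the positive side, parts of your plan do track the paper: $a=2$ is handled by exhibiting a path in $\cF_7$, and $a=3$ is indeed settled by a direct (computer) check in the paper as well. However, for $a\geq 4$ the paper does not rely on a ``sheet-by-sheet'' product structure and routine snaking; it gives an explicit gadget construction (Figure~\ref{fig:a11111-path}) that is concatenated as $a$ grows. Your structural description (the seven classes of blue--blue diagonals, the product factors $\cF_{(a,2)}$, grids, and $\cF_{(3,a)}$, and especially the gluing across classes) is acknowledged by you to be unresolved, and as written it cannot be checked; also your auxiliary unflippability criterion should read ``exactly one vertex of one of the two colors (including the case of exactly one red vertex)'', not only ``exactly one blue vertex''. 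In summary: the Hamilton-path side is a plausible but incomplete sketch, and the Hamilton-cycle side rests on a false claim and needs the balanced-bipartition argument.
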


\subsection{Algorithmic questions and higher arity}

We also provide an algorithmic version of Theorem~\ref{thm:FN-ham}.

\begin{theorem}
\label{thm:FN-algo}
For any $N\geq 8$, a Hamilton path in the graph~$\cF_N$ can be computed in time~$\cO(1)$ on average per node.
\end{theorem}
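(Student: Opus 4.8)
The plan is to turn the construction behind Theorem~\ref{thm:FN-ham} into an explicit generation algorithm and then bound the work spent per output. I would maintain the current colorful triangulation as the dual binary tree on its $N-2$ triangles, augmented by $\cO(1)$-size labels: the color type of each triangle (two red and one blue, or one red and two blue) together with a constant number of pointers marking the ``active'' subobject of the recursive decomposition. In this representation a flip is a tree rotation, hence a constant-size pointer update, so a single step of the traversal costs $\cO(1)$ time \emph{once we know which flip to perform next}; the real task is the decision procedure. Its core is a new rotation Gray code for $k$-ary trees --- equivalently, a flip Gray code for dissections of a convex polygon into $(k+1)$-gons --- that can be generated in $\cO(k)$ amortized time per tree and that sits at the base of the recursion used for Theorem~\ref{thm:FN-ham}. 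To output a Hamilton \emph{path} rather than a cycle we simply traverse the Hamilton cycle of Theorem~\ref{thm:FN-ham} and omit its closing edge.

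For the $k$-ary tree Gray code I would work in the spirit of the rotation Gray code for binary trees of Lucas, Roelants van Baronaigien and Ruskey and of the permutation language framework of Hartung, Hoang, M\"utze and Williams: encode each $n$-vertex $k$-ary tree by a word generalizing the classical bracketing encoding of binary trees, observe that a single tree rotation corresponds to one elementary move of a symbol within this word, and generate all words by the greedy rule ``repeatedly perform the move of the largest symbol that is currently possible, reversing its direction whenever it is blocked.'' The work is to prove that this rule is well defined, outputs every $k$-ary tree exactly once with cyclically consecutive trees related by a rotation, and can be implemented in $\cO(k)$ amortized time; for the last point I would keep an auxiliary array recording, for each symbol, the next symbol that currently blocks its movement, update it locally after every move, and charge the cost of scanning past blocked symbols to symbols that actually move by a potential-function argument, exactly as in the known constant-amortized-time binary case but now paying an extra factor $\cO(k)$ for the larger local neighborhoods.

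Assembling the walk on~$\cF_N$ then amounts to unfolding the recursion of Theorem~\ref{thm:FN-ham} into a finite-state controller that maintains a stack: each stack entry records which recursive branch the current triangulation lies in at that level and whether the level is being traversed forward or backward, i.e.\ the standard reflected/zigzag product construction that glues Gray codes of the pieces. The next flip is obtained by asking the innermost active piece --- which, by the construction in the proof of Theorem~\ref{thm:FN-ham}, is either the $k$-ary tree generator above or a generator for one of the elementary flip graphs appearing in the decomposition, for instance the combination flip graphs of Theorem~\ref{thm:comb} --- for its next step; when that piece is exhausted we pop to the parent level, perform the structural flip that advances it, flip the stored direction, and re-initialize the child. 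Since every such pop-and-advance emits at least one triangulation, the recursion has bounded depth, and the arity~$k$ occurring in the decomposition is an absolute constant, all of this bookkeeping is $\cO(1)$ amortized; combined with the $\cO(1)$-time flips this yields $\cO(1)$ average time per node, the one-time $\cO(N)$ initialization being negligible in the average over the $|\cF_N|$ outputs.

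I expect the principal obstacle to be the amortized analysis of the $k$-ary tree generator, and specifically the exclusion of ``stuck'' configurations in which the greedy rule must scan a long blocked prefix before it can move any symbol --- this is exactly what would push the cost above $\cO(k)$. A secondary point requiring care is verifying that the decomposition underlying Theorem~\ref{thm:FN-ham} is sufficiently local and shallow that the controller really runs in $\cO(1)$ amortized time per node, rather than incurring an extra $\cO(\log N)$ factor from deep recursion or from non-local flips.
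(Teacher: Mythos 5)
Your high-level plan---implement the constructive proof of Theorem~\ref{thm:FN-ham} with an efficient $k$-ary tree rotation Gray code at its core---points in the right direction, but two genuine gaps remain. The first concerns the decomposition of~$\cF_N$ that the Hamiltonicity proof actually uses. The graph~$\cF_N$ is partitioned into pieces of size~$2^q$ with $q=(N-2)/2$, one for each quadrangulation $R=r(T)$ obtained by deleting all monochromatic edges, and each piece is a $q$-dimensional hypercube~$Q_q$ (the $q$ monochromatic diagonals flip independently); the reduced graph on quadrangulations is the rotation graph of \emph{ternary} trees, $\cF_N'\simeq\cG_{N,4}$. The natural representation is therefore not the dual binary tree of the triangulation but the pair $(r(T),b(T))$: a ternary tree plus a length-$q$ bitstring. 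Your controller never identifies the hypercubes and instead names ``the combination flip graphs of Theorem~\ref{thm:comb}'' as elementary pieces, which belong to the coloring sequence $(a,b)$ and play no role here. Without the hypercube structure you cannot specify which of the $2^q-1$ monochromatic flips to perform between two consecutive colorful flips: one needs a Hamilton path in~$Q_q$ between two prescribed terminals, which exists because $Q_q$ is hamilton-laceable and the terminals always have parity difference $1$ or~$3$, and which is computable in $\cO(1)$ per step via (a modification of) the binary reflected Gray code. The amortization is also different from what you describe: the comparatively expensive ternary-tree rotation occurs only once per $2^q$ output triangulations. You also omit the odd-$N$ case, where the boundary edge $(1,N)$ is monochromatic and must be frozen inside an artificial quadrangle.

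The second gap is in the $k$-ary Gray code itself. Your plan---encode trees by words so that one rotation is one elementary symbol move, then run a greedy largest-movable-symbol rule---hits a structural obstruction before the amortized analysis you flag as the main risk: for $k\geq 3$ there is \emph{no} vertex labeling of $k$-ary trees that is preserved under general rotations, so ``the move of symbol~$j$'' is not well defined across all rotations. One must restrict attention to a special class of rotations (those inserting the largest vertex along the rightmost branch, where the labeling is preserved), and then the children sequences glued together in the zigzag construction occasionally require a \emph{suffix reversal}, which forces rotations by more than one step. Predicting and executing these suffix reversals (e.g.\ by precomputing a step sequence for each children sequence) is exactly the delicate part of achieving $\cO(k)$ amortized time, and your proposal does not address it.
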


The initialization time and memory requirement for this algorithm are~$\cO(N)$.
We later discuss the data structures used to represent colorful triangulations in our program.

\begin{figure}[t!]
\includegraphics[page=1]{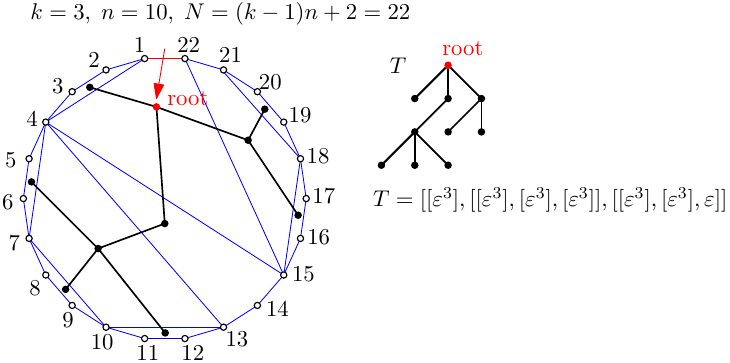}
\caption{Bijection between dissections of an $N$-gon into $(k+1)$-gons and $k$-ary trees, illustrated for the case~$k=3$.}
\label{fig:diss}
\end{figure}

\begin{figure}[t!]
\includegraphics[page=3]{asso}
\caption{Correspondence between flips in quadrangulations (top) and rotations in ternary trees (bottom); cf. Figure~\ref{fig:flip2}.}
\label{fig:flip3}
\end{figure}

Our construction of a Hamilton path/cycle in~$\cF_N$ relies on a Gray code ordering of ternary trees by rotations.
We first describe this setup, generalizing our earlier definitions about triangulations and binary trees; see Figures~\ref{fig:diss} and~\ref{fig:flip3} for illustration.
Let $k\geq 2$ and $n\geq 1$ be integers, and let $N:=(k-1)n+2$.
We consider a \defi{dissection} of a convex $N$-gon into $n$ many $(k+1)$-gons.
A \defi{flip} operation removes an edge shared by two $(k+1)$-gons and replaces it by one of the other $k-1$ possible diagonals of the resulting $2k$-gon.
Dissections of an $N$-gon into $(k+1)$-gons are in bijection with $k$-ary trees with $n$ vertices.
Each $k$-ary trees arises as the geometric dual of a dissection into $(k+1)$-gons, with the root given by `looking through' the outer edge $1N$, and flips translate to tree rotations under this bijection.

We denote the corresponding flip graph of dissections of an $N$-gon into $(k+1)$-gons by $\cG_{N,k+1}$.
The associahedron is the special case $k=2$, i.e., the graph~$\cG_{N,3}=\cG_N$.
By what we said before, the graph~$\cG_{N,k+1}$ is isomorphic to the rotation graph of $k$-ary trees with $n$ vertices, where $N=(k-1)n+2$.
Huemer, Hurtado, and Pfeifle~\cite{MR2474724} first proved that $\cG_{N,k+1}$ has a Hamilton cycle for all $k\geq 3$, which combined with the results of Hurtado and Noy~\cite{MR1723053} for the case $k=2$ (binary trees) yields the following theorem.

\begin{theorem}[\cite{MR1723053} for $k=2$; \cite{MR2474724} for $k\geq 3$]
\label{thm:GNk-ham}
For any $k\geq 2$, $n\geq \max\{2,5-k\}$ and $N:=(k-1)n+2$, the graph~$\cG_{N,k+1}$ has a Hamilton cycle.
\end{theorem}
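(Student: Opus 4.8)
The plan is to prove, by induction on $n$, a \emph{strengthening} of the statement: that $\cG_{N,k+1}$ has a Hamilton cycle and, moreover, a Hamilton path between every pair of vertices in a suitable prescribed family (this extra flexibility is what drives the gluing in the inductive step, and for $k=2$ it is needed to accommodate the factor $\cG_4\cong K_2$, which has a Hamilton path but no cycle). For $k=2$ this reproduces the Hurtado--Noy argument; we set it up to cover all $k\geq 2$ at once. Fix the outer edge $e=\{1,N\}$. In any dissection $D$ of the $N$-gon into $(k+1)$-gons there is a unique $(k+1)$-gon $P(D)$ incident to $e$; write its vertices as $1=u_0<u_1<\cdots<u_{k-1}<u_k=N$. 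Then $P(D)$ cuts the $N$-gon into $k$ sub-polygons, the $i$-th one ($i=0,\ldots,k-1$) spanned by the boundary arc from $u_i$ to $u_{i+1}$ and carrying a sub-dissection into $(k+1)$-gons; hence its vertex count $u_{i+1}-u_i+1$ equals $(k-1)n_i+2$ for an integer $n_i\geq 0$, and $\sum_i(u_{i+1}-u_i)=N-1$ forces $\sum_{i=0}^{k-1}n_i=n-1$ (with $n_i=0$ meaning the arc is a boundary edge carrying no sub-dissection). Thus the dissections partition into classes $C_{\underline n}$ indexed by the compositions $\underline n=(n_0,\ldots,n_{k-1})$ of $n-1$ into $k$ nonnegative parts, and, as an induced subgraph of $\cG_{N,k+1}$, each class is the Cartesian product $\cG_{N_0,k+1}\mathbin{\square}\cdots\mathbin{\square}\cG_{N_{k-1},k+1}$ with $N_i=(k-1)n_i+2$ (a factor being a single vertex when $n_i=0$, and also when $n_i=1$ and $k\geq 3$).

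Next I would analyse the edges of $\cG_{N,k+1}$ between distinct classes. Each comes from flipping a side $\{u_i,u_{i+1}\}$ of $P(D)$ that is a genuine diagonal (so $n_i\geq 1$): inside the $2k$-gon formed by $P(D)$ together with the neighbouring $(k+1)$-gon of the $i$-th sub-dissection, this swaps $P(D)$ for another $(k+1)$-gon still incident to $e$, redistributing the $n_i$ polygons of that sub-dissection among positions near index $i$. Tracking these moves shows that the graph on the classes (two classes adjacent when $\cG_{N,k+1}$ has an edge between them) is very well connected --- for $k=2$, all $n$ classes $(n_0,n_1)$ are pairwise adjacent --- and, crucially, that for a suitable cyclic ordering of the classes, consecutive ones are joined by a large family of inter-class edges whose endpoints can be prescribed within the ranges permitted by the product structure.

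The inductive step is then the familiar stitch. Fix a cyclic ordering $C_{\underline n^{(1)}},\ldots,C_{\underline n^{(r)}}$ in which consecutive classes are adjacent; by the strengthened induction hypothesis each nontrivial factor $\cG_{N_i,k+1}$, hence each product $C_{\underline n^{(j)}}$, has a Hamilton path whose two endpoints match the inter-class edges chosen to enter $C_{\underline n^{(j)}}$ from $C_{\underline n^{(j-1)}}$ and to leave it toward $C_{\underline n^{(j+1)}}$; concatenating these paths along those edges yields a Hamilton cycle of $\cG_{N,k+1}$, and a variant of the same construction supplies the prescribed Hamilton paths. The base cases are the minimal admissible values of $n$: for $k=2$, $n=3$ and $\cG_5\cong C_5$; for $k\geq 3$, $n=2$ and $\cG_{2k,k+1}\cong K_k$ (the complete graph on the $k$ diagonals of a $2k$-gon that split it into two $(k+1)$-gons), which is Hamiltonian since $k\geq 3$. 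The hypothesis $n\geq\max\{2,5-k\}$ is exactly what is needed: it excludes the degenerate non-Hamiltonian instances --- $n=1$ for every $k$, and additionally $n=2$ for $k=2$ --- and it ensures that in the inductive step there is always a class large enough to absorb the single-vertex classes.

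The main obstacle I expect is the stitching at the thin classes, together with the endpoint bookkeeping. When several $n_i$ vanish, the class $C_{\underline n}$ is a low-dimensional product --- possibly a single vertex --- with few internal edges, so one cannot route an arbitrary Hamilton path between prescribed endpoints inside it, and the inter-class edges touching it are constrained; one must show the cyclic ordering of classes can always be arranged so that every such thin class is traversed `in passing' between two fat classes supplying the required flexibility. Pinning down the product-Hamiltonicity lemmas --- which endpoint pairs are realizable in $\cG_{N_i,k+1}$, and how they combine under $\mathbin{\square}$ --- and checking the finitely many base configurations is the technical heart; the remainder is routine induction.
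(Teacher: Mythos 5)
Your decomposition is genuinely different from the one in the paper: you partition the dissections according to the root $(k+1)$-gon incident to the outer edge, obtaining classes that are Cartesian products of smaller flip graphs, and you propose to stitch Hamilton paths of the classes along a Hamilton cycle of the quotient graph. The paper instead inducts on the number $n$ of vertices of the dual $k$-ary tree: each $(n-1)$-vertex tree $T$ is expanded into its \emph{children sequence} $c(T)$ (all $n$-vertex trees obtained by inserting the new vertex on the rightmost branch, which form a path-plus-cliques structure in the rotation graph), and the sequences are traversed alternately forward and backward along a Hamilton cycle of $\cT_{n-1,k}$, with a local analysis of how first/last entries of adjacent children sequences relate and a parity fix when $t_{n-1,k}$ is odd. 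The paper's route avoids products entirely and, importantly, is what makes the $\cO(k)$-per-tree algorithm of Theorem~\ref{thm:GNk-algo} possible.

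However, as written your argument has a genuine gap: the strengthened induction hypothesis is never actually formulated, and everything hinges on it. You need to say precisely which pairs of endpoints admit a Hamilton path in $\cG_{N,k+1}$ (note $\cG_5\simeq C_5$ is far from Hamilton-connected, and a factor $\cG_4\simeq K_2$ forces parity constraints in products), how such endpoint sets behave under the Cartesian product $\mathbin{\square}$, and why the specific endpoints forced by the inter-class flips (which lie in rather constrained sub-products, since flipping a side of the root polygon pins down the root polygon of the affected sub-dissection) fall into the realizable family. The singleton and thin classes are not a minor nuisance either: already for $n=3$ and $k$ large, $\binom{k}{2}$ of the $\binom{k+1}{2}$ classes are single vertices and only $k$ classes are nontrivial (each a $K_k$), so one must prove that the class graph has a Hamilton cycle compatible with all these constraints; in general the "one Hamilton path per class" stitching may force a fat class to be entered and left several times, requiring a decomposition into vertex-disjoint paths with prescribed ends, which is a substantially stronger statement than Hamiltonicity. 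These are exactly the points you defer as "the technical heart," so the proposal is a plausible strategy rather than a proof; I would not be confident it closes without significant additional work, whereas the children-sequence induction sidesteps all of these issues.
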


The proof from~\cite{MR2474724} for the case $k\geq 3$ does not generalize the simple inductive construction of a Hamilton path/cycle in the associahedron (the case $k=2$) described in~\cite{MR1723053}, and it imposes substantial difficulties when translating it to an efficient algorithm.
Consequently, we provide a unified and simplified proof for Theorem~\ref{thm:GNk-ham}, valid for all $k\geq 2$, which can be turned into an efficient algorithm.
This result generalizes the efficient algorithm for computing a Hamilton path in the associahedron provided by Lucas, Roelants van Baronaigien, and Ruskey~\cite{MR1239499}.

\begin{theorem}
\label{thm:GNk-algo}
For any $k\geq 2$, $n\geq \max\{2,5-k\}$ and $N:=(k-1)n+2$, a Hamilton path in~$\cG_{N,k+1}$ can be computed in time~$\cO(k)$ on average per node.
\end{theorem}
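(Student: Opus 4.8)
The plan is to make the inductive construction behind Theorem~\ref{thm:GNk-ham} effective. That construction can be phrased as producing, for all valid $k$ and $n$, a linear ordering $L_{n,k}$ of all dissections counted by $\cG_{N,k+1}$ (equivalently, of all $k$-ary trees with $n$ vertices) in which any two consecutive dissections differ by a single flip, and which is defined recursively: $L_{n,k}$ is a concatenation of blocks, each block a possibly reversed copy of some $L_{n',k}$ with $n'<n$, glued by a bounded number of \emph{transition flips}, with the first and last dissection of $L_{n,k}$ being prescribed extremal dissections (so that the blocks can be stitched together). Our algorithm performs this recursion online: it maintains the current dissection together with a stack encoding where we currently are in the recursion, and at each step it performs the unique flip that the construction prescribes to move from the current dissection to its successor in $L_{N,k+1}$, updating the stack accordingly. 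Since Theorem~\ref{thm:GNk-algo} only asks for a Hamilton \emph{path}, we may use the recursive path construction verbatim, avoiding the extra work of closing it into a cycle.

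Two ingredients make each step cheap. The first is a data structure: we represent a dissection as a collection of cells, each cell stored as a cyclic doubly linked list of its $k+1$ corners, where each corner additionally points to the cell lying across its outgoing edge (or records that this edge is on the boundary), together with the dual $k$-ary tree with parent pointers. A single flip touches only two adjacent cells, which together form a $2k$-gon that is re-split along a new diagonal; this is a splice of two cyclic lists, a cut, and a relink, i.e.\ $\cO(k)$ pointer updates and $\cO(1)$ changes to the dual tree. We arrange the recursion so that every transition flip acts on an edge incident to a cell that is visible from a pointer kept on the relevant stack frame, so that locating and performing it also costs $\cO(k)$. The second ingredient is an amortization: the total running time of the online recursion is $\cO(k)$ times the size of the \emph{recursion tree} of the construction, because moving from the current leaf of this tree to the next one — which is what happens between two consecutive outputs — traverses each of its edges only $\cO(1)$ times over the whole run, and each such edge traversal costs $\cO(k)$ (constant stack bookkeeping, at most one transition flip, and, when descending, reinitializing the new subproblem, which is $\cO(1)$ for an empty region and otherwise charged to that region). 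A direction bit per stack frame takes care of the reversed blocks.

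Consequently the crux is to ensure that the recursion tree has size $\cO(C)$, where $C$ is the number of dissections, so that $\cO(k)$ per edge yields $\cO(k)$ per dissection on average; this is precisely the constant-amortized-time condition that every internal node of the (suitably pruned) recursion tree has at least two children. The naive recursion — pick the root cell, which splits the $N$-gon into $k$ smaller regions, then recurse into each region — fails this, since a region may be forced (when its number of vertices admits only one way to continue) or may have up to $k-1$ empty siblings, creating long skinny chains of single-child recursion nodes that would blow the recursion tree up to size $\Theta(nC)$ and the amortized cost up to $\Theta(n)$. The fix, generalizing the treatment of Lucas, Roelants van Baronaigien, and Ruskey~\cite{MR1239499} for $k=2$, is to reparametrize the recursion by a quantity that genuinely branches at every level — for instance the sequence of sizes of the subtrees hanging off the leftmost root-to-leaf branch, or an equivalent weight-sequence encoding of the $k$-ary tree — and to collapse each skinny stretch of the recursion into a single compressed step whose effect on the linked representation is still a bounded sequence of flips realizable in $\cO(k)$ total time. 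Checking that (i) this reparametrized recursion still yields a flip sequence (each step one rotation), which is where the simplified proof of Theorem~\ref{thm:GNk-ham} is reused, (ii) its recursion tree has size $\cO(C)$, and (iii) each compressed step costs $\cO(k)$ in the data structure, is the main technical work; the remaining items — handling the small base cases $n\le\max\{2,5-k\}$ explicitly, and building the initial extremal dissection together with the initial stack in time and space $\cO(N)$ — are routine.
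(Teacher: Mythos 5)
Your overall strategy---implement the inductive construction behind Theorem~\ref{thm:GNk-ham} online, with a pointer-based representation in which one flip costs $\cO(k)$, and amortize the bookkeeping over a recursion tree of size proportional to the number of objects---is a legitimate route, and several ingredients match what is needed (computing a path rather than a cycle to avoid the interleaving, a direction bit per level, $\cO(N)$ initialization). However, there is a genuine gap exactly where you write that correctness of the flip sequence ``is where the simplified proof of Theorem~\ref{thm:GNk-ham} is reused''. In that construction, for $k\ge 3$ the internal order of a block (the children sequence $c(T)$ of Section~\ref{sec:cseq}) is \emph{not} fixed up to a direction bit: on a transition $c(T)\rightarrow\rev(c(T'))$ one is either in case~\eqref{eq:lrot1}, or in case~\eqref{eq:lrot2}, where the last entry of one sequence is adjacent only to the $r$th-last entry of the other, which forces a reversal of the last $r$ elements of that block and produces rotations by more than one step (legal by~\eqref{eq:Cell}). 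Which case occurs depends on whether certain subtrees of the \emph{neighboring} tree are empty, so it is not determined by the current stack frame alone. Your proposal never detects or realizes these suffix reversals, so as described it would output consecutive trees that do not differ in a single rotation whenever case~\eqref{eq:lrot2} arises. Handling this within the time bound is the actual crux of the theorem; the paper does it by eagerly precomputing, each time a vertex begins a run of rotations, a ``step sequence'' for the entire block, and amortizing that precomputation and the resulting $>1$-step rotations against the block's length.

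A second, lesser issue is that your fix for the recursion-tree size is only a plan: you correctly diagnose that a naive region-by-region recursion has skinny single-child chains, but the reparametrization ``by a quantity that genuinely branches at every level'' and the claim that each compressed step is realizable in $\cO(k)$ time are precisely the statements that need proof, and you explicitly defer them. Note that the paper sidesteps this entirely by not running a recursion at runtime: the vertex to rotate next and the direction are selected in $\cO(1)$ per step using the bookkeeping arrays of the permutation-language framework of~\cite{MR4598046,cardinal-merino-muetze:24}, so the only nontrivial amortization is the one for the step sequences above. To complete your route you would need either to carry out the recursion-tree analysis you sketch or to switch to that history-free selection rule, and in either case to incorporate the case distinction between~\eqref{eq:lrot1} and~\eqref{eq:lrot2}.
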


\begin{figure}[b!]
\centering
\begin{minipage}{.5\textwidth}
\centering
\includegraphics[page=1]{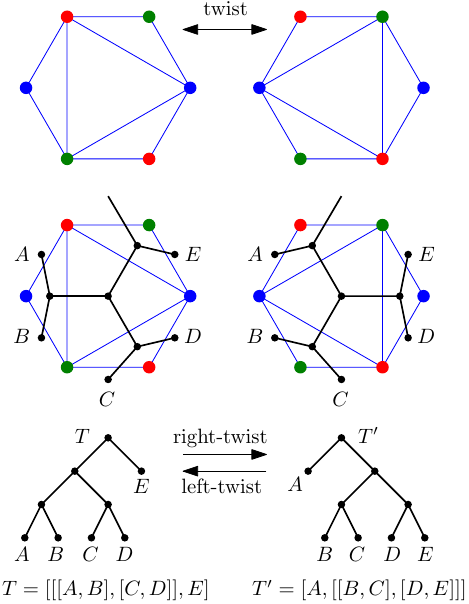}
\captionof{figure}{Twist operation and the corresponding binary trees.}
\label{fig:twist}
\end{minipage}%
\begin{minipage}{.5\textwidth}
\centering
\includegraphics{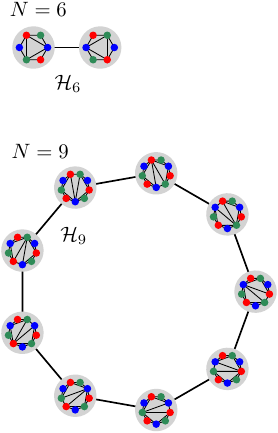}
\captionof{figure}{The flip graphs~$\cH_6$ and~$\cH_9$.}
\label{fig:H69}
\end{minipage}
\end{figure}

The initialization time and memory requirement for this algorithm are~$\cO(kn)$.

We implemented the algorithms mentioned in Theorems~\ref{thm:FN-algo} and~\ref{thm:GNk-algo} in C++, and made the code available for download, experimentation and visualization on the Combinatorial Object Server~\cite{cos_kary}.
In fact, we also provide a slightly more technical implementation for efficiently computing a Hamilton cycle in~$\cG_{N,k+1}$ for arbitrary~$k\geq 2$.  

\subsection{Three colors}

We now consider colorings of the points~$1,\ldots,N$ with more than two colors.
To start with, we color the points in counterclockwise order alternatingly red (\tr), blue (\tb) and green (\tg), and we consider triangulations in which every triangle has points of all three colors, i.e., one point of each color.
This setting has also been considered by Sagan~\cite{MR2426410}.
Note that flips of a single diagonal as before are not valid operations anymore (in the sense that the flip graph would not have any edges), so we consider a modified flip operation instead which consists of a particular sequence of 4~flips.
Specifically, a \defi{twist} `rotates' a triangle that is surrounded by three triangles, i.e., the inner triangle is removed, creating an empty 6-gon, and the triangle is inserted the other way; see Figure~\ref{fig:twist}.
We write $\cH_N$ for the flip graph of colorful triangulations under twists; see Figure~\ref{fig:H69}.

\begin{theorem}
\label{thm:HN-conn}
For any $N$ that is a multiple of~3, the graph~$\cH_N$ is connected.
\end{theorem}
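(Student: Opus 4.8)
The plan is to prove this by induction on~$N$, reducing a colorful triangulation of the $N$-gon to one of a polygon with three fewer vertices that again carries the cyclic three-coloring. Write $N=3m$ and label the points $1,\ldots,N$ as in the paper, so that point~$i$ has the color determined by $i\bmod 3$; note that every rotation of the point set acts as an automorphism of~$\cH_N$ (it cyclically permutes the colors but preserves colorfulness). The base cases $N\in\{3,6\}$ are immediate: $\cH_3$ has a single node, and a cyclically three-colored hexagon admits exactly two colorful triangulations — its two central-triangle triangulations, since a colorful triangulation can only use diagonals between differently colored points and hence no main diagonal of the hexagon — which are joined by a single twist.

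For the inductive step ($m\ge 3$), the key claim is that every colorful triangulation~$T$ can be transformed, by a sequence of twists, into a colorful triangulation that contains the diagonal~$15$ joining points~$1$ and~$5$ (which have different colors). Granting this, the induction closes as follows. The diagonal~$15$ cuts off the pentagon on the points $1,2,3,4,5$, which with its inherited coloring has a \emph{unique} colorful triangulation, namely the fan centered at point~$3$ (the unique point whose color occurs only once in the pentagon); so inside~$15$ the triangulation is forced, while outside~$15$ we see an arbitrary colorful triangulation of the polygon~$Q$ on the points $1,5,6,\ldots,N$. Now~$Q$ has $3(m-1)$ vertices and inherits precisely the cyclic three-coloring of a $3(m-1)$-gon, because deleting the three consecutive points $2,3,4$ removes one point of each color. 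By the induction hypothesis, the twist graph of colorful triangulations of~$Q$ is connected; and every twist performed inside~$Q$ is also legal on the corresponding triangulation of the $N$-gon, because a twist acts at a triangle of degree~$3$ whose three edges are diagonals of~$Q$, hence differ from the boundary edge~$15$ of~$Q$, so the four triangles it involves all lie in~$Q$ and it neither alters the pentagon nor deletes~$15$. Therefore all colorful triangulations of the $N$-gon that contain~$15$ form a single connected subset of~$\cH_N$, and together with the claim this shows that $\cH_N$ is connected.

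It remains to prove the claim, which I expect to be the main obstacle. The difficulty is that a twist is a coarse move: it exchanges three diagonals forming a triangle, and it can only be applied at a triangle of degree~$3$, so one cannot simply remove the diagonals blocking~$15$ one at a time in the style of the classical flip-connectivity argument for the associahedron. My plan for the claim is a second potential argument based on the observation that $15$ lies in~$T$ exactly when $\{1,3,5\}$ is a triangle of~$T$ (this is the only colorful triangle using the edge~$15$ on the side of the points $2,3,4$); for a colorful triangulation not containing~$15$ one would then exhibit a twist making measurable progress, for example increasing the degree of point~$3$, or decreasing the number of diagonals of~$T$ that separate point~$3$ from~$1$ or from~$5$. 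The subtle point, and the technical heart of the argument, is that the triangle one wants to twist at need not have degree~$3$; one may first need \emph{preparatory} twists that leave the chosen potential unchanged while creating a degree-$3$ triangle in the right place, so one has to verify that this auxiliary process terminates and creates no new obstructions — in particular one must argue carefully around ears (degree-$1$ triangles) and degree-$2$ triangles, since these block twists. I would handle this either by a further induction on~$m$ (peeling off a pentagon elsewhere on the polygon) or by a double induction on the potential together with the number of low-degree triangles.
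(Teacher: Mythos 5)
Your reduction framework is sound and genuinely different from the paper's: cutting along the diagonal~$15$, observing that the pentagon $1,2,3,4,5$ has the fan at~$3$ as its unique colorful triangulation, that the outer polygon $Q$ on $1,5,6,\ldots,N$ inherits the cyclic three-coloring of a $3(m-1)$-gon, and that any twist legal in~$Q$ is legal in the full triangulation and preserves both the diagonal~$15$ and the forced pentagon---all of this checks out, as do the base cases $N=3,6$. However, the proof has a genuine gap exactly where you say it does: the claim that every colorful triangulation can be transformed by twists into one containing~$15$ (equivalently, containing the triangle~$135$) is never established. What you offer is a plan---a potential function (degree of point~$3$, or the number of separating diagonals) together with ``preparatory'' twists to manufacture a degree-$3$ triangle where needed---and you yourself flag that you have not verified that this auxiliary process terminates or avoids creating new obstructions (ears and degree-$2$ triangles blocking twists). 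Since a twist is only applicable at a triangle surrounded by three triangles, and a single twist changes three diagonals at once, this reachability claim is not a routine adaptation of the classical flip-connectivity argument; it is essentially the entire content of the theorem, and without it the induction never gets off the ground.

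For comparison, the paper does precisely this kind of work, but in the dual picture: colorful triangulations correspond to binary trees with $N-2$ vertices in which every vertex at odd depth has two children, and a twist is the subtree exchange $[[[A,B],[C,D]],E]\leftrightarrow[A,[[B,C],[D,E]]]$. The paper then normalizes an arbitrary such tree to a fixed canonical tree~$T_0$ by two interleaved procedures (emptying the right subtree at even-depth vertices, and emptying the children of the right child at odd-depth vertices), each of which makes monotone progress and uses the odd-depth degree constraint to guarantee that the required twist is always applicable. If you want to complete your geometric route, you will need an argument of comparable precision for your claim---for instance, translating it into the tree language, where the availability of twists is easier to control---rather than the current sketch.
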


\subsection{Outline of this paper}

In Section~\ref{sec:prelim} we collect a few definitions and auxiliary results used throughout this paper.
In Section~\ref{sec:colorful} we prove Theorems~\ref{thm:FN-ham} and~\ref{thm:Falpha-ham}.
In Section~\ref{sec:a11111} we prove Theorem~\ref{thm:a11111}.
In Section~\ref{sec:unified} we present a new proof of Theorem~\ref{thm:GNk-ham}, which forms the basis of the algorithmic results.
In Section~\ref{sec:algo} we we prove Theorems~\ref{thm:FN-algo} and~\ref{thm:GNk-algo}.
In Section~\ref{sec:3col} we turn to the setting with three colors, proving Theorem~\ref{thm:HN-conn}.
We conclude with some open questions in Section~\ref{sec:open}.

\section{Preliminaries}
\label{sec:prelim}

In this section we introduce some notations that will be used throughout this paper.
For the reader's convenience, some frequently used symbols are summarized in Table~\ref{tab:notation}.

\begin{table}
\caption{Summary of frequently used notations.}
\label{tab:notation}
\begin{tabular}{ll}
\bf Symbol & \bf Description \\ \hline
$N$ & number of points \\
$k$ & arity of the trees, dissection into $(k+1)$-gons \\
$\cG_N$ & associahedron, i.e., flip graph on all triangulations \\
$\cT_{n,k}$ & set of $k$-ary trees with $n$ vertices \\
$\cD_{N,k+1}$ & set of dissections of $N$-gon into $(k+1)$-gons \\
$\cG_{N,k+1}$ & flip graph on dissections~$\cD_{N,k+1}$ \\ \hline
$\cC_N$ & set of colorful triangulations for alternating red-blue coloring \\
$\cF_N\seq \cG_N$ & flip graph on $\cC_N$ \\
$r(T)$ & reduced dissection obtained from triangulation $T$ \\
$\cF_N'$ & flip graph on reduced dissections \\ \hline
$\alpha=(\alpha_1,\ldots,\alpha_\ell)$ & coloring sequence for two colors, $N=\sum_{i=1}^\ell \alpha_i$ \\
$\cC_\alpha$ & set of colorful triangulations for coloring sequence~$\alpha$ \\
$\cF_\alpha$ & flip graph on $\cC_\alpha$ \\
$E_\alpha$ & monochromatic boundary edges \\
$q$ & number of quadrangles \\
$t$ & number of triangles \\
$\cD_\alpha$ & set of reduced dissections into quadrangles and triangles \\
$\cF_\alpha'$ & flip graph on $\cD_\alpha$ \\ \hline
$\cC_N'$ & set of colorful triangulations for alternating red-blue-green coloring \\
$\cH_N$ & flip graph on $\cC_N'$ \\ \hline
$\varepsilon$ & empty string \\
$\rev(x)$ & reverse of sequence~$x$ \\
$f(x)$, $\ell(x)$ & first and last entry of sequence~$x$ \\
$v(G)$, $\Delta(G)$ & number of vertices of graph~$G$, maximum degree of~$G$ \\
$G\simeq H$ & isomorphic graphs $G$ and~$H$ \\
$Q_d$ & $d$-dimensional hypercube \\ \hline
$\nu_T(S)$ & subtree sequence for subtree~$S$ of~$T$ \\
$\rho(T)$ & rightmost branch of~$T$ \\
$T\sim T'$ & two trees that differ in a rotation \\
$c(T)$ & children sequence of tree~$T$ \\
\end{tabular}
\end{table}

\subsection{String operations}

We write~$\varepsilon$ for the empty string.
For any string~$x$ and any integer~$k\geq 0$, we write $x^k$ for the $k$-fold concatenation of~$x$.
Given any sequence $x=(x_1,\ldots,x_\ell)$, we write $\rev(x):=(x_\ell,x_{\ell-1},\ldots,x_1)$ for the reversed sequence.

\subsection{Dissections and trees}

For integers $k\geq 2$ and $n\geq 1$, let $N:=(k-1)n+2$.
We write~$\cD_{N,k+1}$ for the set of all dissections of a convex $N$-gon into~$(k+1)$-gons.
In particular, $\cD_{N,3}$ are triangulations of a convex $N$-gon.
We write $\cT_{n,k}$ for the set of all $k$-ary trees with $n$ vertices, and $t_{n,k}:=|\cT_{n,k}|$.
Both objects are counted by the \defi{$k$-Catalan numbers} (OEIS sequence~A062993), i.e., we have
\begin{equation*}
|\cD_{N,k+1}|=|\cT_{n,k}|=t_{n,k}=\frac{1}{(k-1)n+1}\binom{kn}{n};
\end{equation*}
see Table~\ref{tab:kCat} .
We also define $t_{n,3}':=\sum_{i=0}^n t_{i,3}\cdot t_{n-i,3}$ as the number of pairs of ternary trees with $n$ vertices in total.
We have the explicit formula
\begin{equation*}
t_{n,3}'=\frac{1}{n+1}\binom{3n+1}{n}
\end{equation*}
(OEIS~A006013).

\begin{table}[t!]
\caption{Counts $|\cD_{N,k+1}|=|\cT_{n,k}|$ of dissections and trees, for $k=2,\ldots,10$ and $n=1,\ldots,10$ ($N=(k-1)n+2$).}
\label{tab:kCat}
\begin{tabular}{l|llllllllll}
 & \multicolumn{10}{|c}{$n$} \\
$k$ & 1 & 2 & 3 & 4 & 5 & 6 & 7 & 8 & 9 & 10 \\ \hline
2   & 1 & 2 & 5 & 14 & 42 & 132 & 429 & 1430 & 4862 & 16796 \\
3   & 1 & 3 & 12 & 55 & 273 & 1428 & 7752 & 43263 & 246675 & 1430715 \\
4   & 1 & 4 & 22 & 140 & 969 & 7084 & 53820 & 420732 & 3362260 & 27343888 \\
5   & 1 & 5 & 35 & 285 & 2530 & 23751 & 231880 & 2330445 & 23950355 & 250543370 \\
6   & 1 & 6 & 51 & 506 & 5481 & 62832 & 749398 & 9203634 & 115607310 & 1478314266 \\
7   & 1 & 7 & 70 & 819 & 10472 & 141778 & 1997688 & 28989675 & 430321633 & 6503352856 \\
8   & 1 & 8 & 92 & 1240 & 18278 & 285384 & 4638348 & 77652024 & 1329890705 & 23190029720 \\
9   & 1 & 9 & 117 & 1785 & 29799 & 527085 & 9706503 & 184138713 & 3573805950 & 70625252863 \\
10  & 1 & 10 & 145 & 2470 & 46060 & 910252 & 18730855 & 397089550 & 8612835715 & 190223180840 \\
\end{tabular}
\end{table}

For any coloring sequence~$\alpha$, we write~$\cC_\alpha$ for the set of colorful triangulations with coloring pattern defined in~\eqref{eq:col-pat}.
By these definitions, $\cF_\alpha$ is the subgraph of~$\cG_N$ induced by the triangulations in~$\cC_\alpha$.
Sagan's question concerned the special case $\alpha:=1^N$ for even~$N$ and $\alpha:=(2,1^{N-2})$ for odd~$N$, and for those particular coloring sequences~$\alpha$ we simply write~$\cC_N=\cC_\alpha$ and~$\cF_N=\cF_\alpha$.
Sagan proved the following.

\begin{theorem}[{\cite[Thm.~2.1]{MR2426410}}]
\label{thm:count}
For any $q\geq 1$ we have
\[
|\cC_N|=\begin{cases}
2^q\cdot t_{q,3}=\frac{2^q}{2q+1}\binom{3q}{q} & \text{if } N=2q+2, \\
2^q\cdot t_{q,3}'=\frac{2^q}{q+1}\binom{3q+1}{q} & \text{if } N=2q+3.
\end{cases}
\]
\end{theorem}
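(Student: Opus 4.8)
The plan is to set up a bijection between colorful triangulations and quadrangulations of the polygon, equipped with a little extra data. Call an edge or diagonal \emph{monochromatic} if its two endpoints receive the same color, and \emph{bichromatic} otherwise. Since consecutive points of the $N$-gon always have opposite parity, for even $N=2q+2$ every boundary edge is bichromatic, while for odd $N=2q+3$ the single boundary edge $\{N,1\}=\{2q+3,1\}$ joins two red points and is the unique monochromatic boundary edge. Given a colorful triangulation $T$, I would delete all of its monochromatic diagonals (keeping all boundary edges) and claim that the resulting dissection consists only of quadrilaterals when $N$ is even, and of quadrilaterals together with exactly one triangle — the triangle on $\{2q+3,1\}$ — when $N$ is odd. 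Since $T$ is recovered from this dissection by re-inserting one of the two diagonals of each quadrilateral, this exhibits $\cC_N$ as the set of such ``refined quadrangulations''.

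The heart of the argument is the following structural lemma. Each cell $C$ of the dissection obtained from $T$ by deleting monochromatic diagonals is bounded only by bichromatic edges (boundary edges or bichromatic diagonals), except possibly the cell containing $\{2q+3,1\}$ in the odd case; hence the colors around $\partial C$ alternate and $C$ has an even number of vertices (an odd number, in the exceptional case). Inside $C$, the triangulation $T$ uses only monochromatic diagonals. Now a colorful triangle has exactly one monochromatic edge — the edge joining its two equal-colored vertices — so inside $C$ every triangle of $T$ uses exactly one interior diagonal of $C$ and two edges of $\partial C$; that is, every triangle of $T$ inside $C$ is an \emph{ear} of $C$. A triangulation of an $m$-gon has $m-2$ triangles and $m$ boundary edges, each boundary edge lying in a unique triangle, so if all $m-2$ triangles are ears (each using two distinct boundary edges) then $2(m-2)\le m$, i.e.\ $m\le 4$; since each triangle of $T$ has a monochromatic edge, which in the even case is necessarily a diagonal and hence interior to its cell, no cell is a single triangle, so $m\ge 4$ and therefore $m=4$. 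For the exceptional cell in the odd case: the triangle on $\{2q+3,1\}$ has its two red vertices $1$ and $2q+3$, hence a blue apex $x$, and the two resulting red--blue edges cannot be monochromatic diagonals, so they lie on $\partial C$, which forces that cell to be precisely this triangle. Removing it splits the rest of the polygon into two sub-polygons $R_1$ and $R_2$ spanned by $1,\dots,x$ and $x,\dots,2q+3$, each with all boundary edges bichromatic, to which the even-case analysis applies and shows their remaining cells are quadrilaterals.

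For the converse I would first record the elementary \emph{parity lemma}: in any quadrangulation of an even polygon every diagonal is bichromatic, because a diagonal cuts off a sub-polygon whose number of sides equals the number of boundary edges on that arc plus $1$, and this must be even for the sub-polygon to be quadrangulable into quadrilaterals. Consequently every quadrilateral of a quadrangulation has vertices colored $\tr\tb\tr\tb$ around it, so both of its diagonals are monochromatic and both ways of triangulating it produce two colorful triangles; re-inserting such a diagonal in every quadrilateral of a quadrangulation (even case), or of $R_1$ and $R_2$ together with the ear triangle on $\{2q+3,1\}$ (odd case), therefore yields a colorful triangulation, and the two operations are mutually inverse since the re-inserted diagonals are exactly the monochromatic ones. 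Finally I would count: for $N=2q+2$ there are, by the dissection--tree correspondence of Section~\ref{sec:prelim}, exactly $t_{q,3}$ quadrangulations, each with $q$ quadrilaterals and hence $2^q$ diagonal choices, giving $|\cC_N|=2^q t_{q,3}$; for $N=2q+3$ the data is a choice of blue apex $x$ (equivalently a split $q=i+(q-i)$), a quadrangulation of $R_1$ into $i$ quadrilaterals, a quadrangulation of $R_2$ into $q-i$ quadrilaterals, and $2^q$ diagonal choices, giving $|\cC_N|=2^q\sum_{i=0}^{q} t_{i,3}t_{q-i,3}=2^q t'_{q,3}$. Substituting the closed forms for $t_{q,3}$ and $t'_{q,3}$ from Section~\ref{sec:prelim} finishes the proof.

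I expect the structural lemma to be the main obstacle: one must argue carefully that deleting monochromatic diagonals never leaves a cell larger than a quadrilateral, and the clean way to see this is the ear-counting inequality $2(m-2)\le m$ together with the observation that colorful triangles inside an all-bichromatic-boundary cell are forced to be ears; the odd case additionally requires pinning down the unique non-quadrilateral cell and the resulting split into $R_1$ and $R_2$. The remaining ingredients — the parity lemma, the reverse construction, and the bookkeeping with $t_{q,3}$ and $t'_{q,3}$ — are routine once this is in place.
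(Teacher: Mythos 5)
Your proposal is correct and takes essentially the same route as the paper (Section~\ref{sec:alternating} and its generalization in Section~\ref{sec:general}, following Sagan): delete the monochromatic edges of a colorful triangulation to obtain a quadrangulation (plus the one boundary triangle on $\{N,1\}$ when $N$ is odd), note that each of the $q$ quadrilaterals can be re-triangulated in exactly two ways, and count the quadrangulations via ternary trees, which yields $2^q t_{q,3}$ and $2^q t'_{q,3}$ respectively. The only cosmetic difference is in justifying that every cell is a quadrilateral: the paper observes that the unique monochromatic edge of a triangle is also the unique monochromatic edge of the triangle across it, so triangles pair up directly, whereas you reach the same conclusion via the ear-counting inequality $2(m-2)\le m$.
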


The two sequences in this theorem are OEIS~A153231 and~A369510, respectively.

\subsection{Three colors}
\label{sec:prelim-3col}

We write~$\cC_N'$ for the set of triangulations for which the points $1,\ldots,N$ are colored red ($\tr$), blue ($\tb$), green ($\tg$) alternatingly, and where every triangle has points of all three colors.
In such a triangulation~$T\in\cC_N'$, there are two types of triangles, those in which a counterclockwise reading of its colors gives $\tr\tb\tg$ and those which give $\tr\tg\tb$, and we refer to them as type~0 and type~1, respectively; see Figure~\ref{fig:bw}.
Clearly, any two triangles that share an edge have opposite types.
Furthermore, if $N$ is a multiple of~3, then all triangles on the boundary are of type~0.
Equivalently, every type~1 triangle is not on the boundary but is surrounded by three type~0 triangles.
It follows that in the corresponding binary tree, all vertices in odd distance from the root have two children, i.e., we have a bijection between~$\cC_N'$ and binary trees with $N-2$ vertices in which all vertices in odd distance from the root have two children.
By taking shortcuts between consecutive odd distance vertices, we obtain a bijection with pairs of 4-ary trees with in total $p:=(N-3)/3$ vertices.
These families of objects are counted by~$|\cC_N'|=\frac{1}{3p+2}\binom{4p+1}{p}$ (OEIS A069271); cf.~\cite[Thm.~2.2]{MR2426410}.
Similar formulas can be derived when $N$ is not a multiple of~3; see~\cite{MR2426410}.

\begin{figure}[h!]
\includegraphics{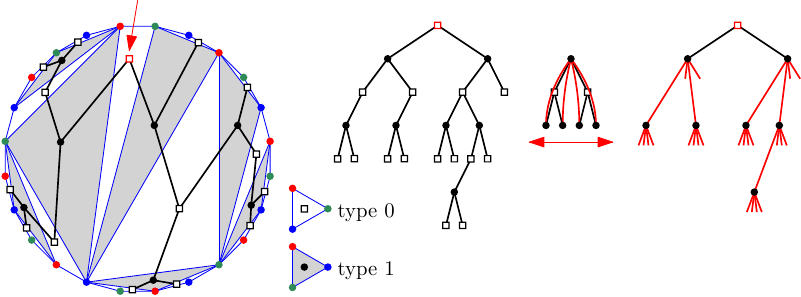}
\caption{Triangle types in 3-colored triangulations and bijection with degree-constrained binary trees and pairs of 4-ary trees.}
\label{fig:bw}
\end{figure}

\subsection{Graphs}
\label{sec:graphs}

For a graph~$G$, we write~$v(G)$ for the number of vertices of~$G$, and~$\Delta(G)$ for its maximum degree.
Also, we write $G\simeq H$ for two graphs~$G$ and~$H$ that are isomorphic.

For any integer~$d\geq 1$, the \defi{$d$-dimensional hypercube~$Q_d$} is the graph that has as vertices all bitstrings of length~$d$, and an edge between any two strings that differ in a single bit.

\begin{lemma}[\cite{MR2178189}]
\label{lem:edge-ham}
For any $d\geq 2$ and any set~$E$ of at most $2d-3$ edges in~$Q_d$ that together form vertex-disjoint paths, there is a Hamilton cycle that contains all edges of~$E$.
\end{lemma}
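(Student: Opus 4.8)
The statement is a theorem of the cited work~\cite{MR2178189}; the natural route, which I would follow, is induction on the dimension~$d$, splitting $Q_d$ into two opposite facets $Q^0 \simeq Q^1 \simeq Q_{d-1}$ along a carefully chosen coordinate direction. First I would settle the small dimensions $d\in\{2,3\}$ (and, if the clean inductive step turns out to need it, $d\in\{4,5\}$) by direct inspection or a short finite case analysis; for $d=2$ the cube $Q_2$ is a $4$-cycle and the at most $2d-3=1$ prescribed edge lies on it trivially. These form the base of the induction.

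For the inductive step, assume the claim in dimension $d-1\ge 2$, i.e.\ that any at most $2(d-1)-3=2d-5$ edges forming vertex-disjoint paths extend to a Hamilton cycle of $Q_{d-1}$. Given $E$ with $|E|\le 2d-3$ in $Q_d$, a pigeonhole argument over the $d$ coordinate directions yields a direction $j$ carrying at most one edge of $E$. Split $Q_d$ along $j$ into $Q^0,Q^1$ joined by the perfect matching $M_j$, and write $E_j:=E\cap M_j$, together with $E_0,E_1$ for the edges of $E$ inside $Q^0,Q^1$. The plan is then: obtain by induction a Hamilton cycle $C^0$ of $Q^0$ through $E_0$ and a Hamilton cycle $C^1$ of $Q^1$ through $E_1$, pick two $j$-parallel matching edges $e=uv$ and $e'=u'v'$ with $uu'\in C^0$ and $vv'\in C^1$ and $uu',vv'\notin E$, delete $uu'$ and $vv'$, and insert $e,e'$; this reconnection produces a single Hamilton cycle of $Q_d$ (the parity is automatic since exactly two crossing edges are used), and whenever $E_j\neq\emptyset$ one simply takes its unique edge as $e$. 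To make all of this go through one must (i)~distribute the roughly $2d-3$ in-facet edges so that neither $E_0$ nor $E_1$ exceeds the budget $2d-5$, which forces a careful rather than arbitrary choice of the split direction, balancing the two counts; (ii)~guarantee that the reconnection edges $e,e'$ can be found free of $E$ and in compatible positions on $C^0$ and $C^1$, which requires strengthening the inductive statement to also control where the sub-cycles pass (for instance, to prescribe in addition to the forced edges one further incident edge at a chosen vertex, equivalently to produce Hamilton paths with prescribed endpoints and prescribed internal edges); and (iii)~handle a present forced $j$-edge $uv$ by turning it into the requirement that $u$ and $v$ be the endpoints of the Hamilton paths obtained from $C^0$ and $C^1$ after deleting $e'$.

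The main obstacle, and where I expect most of the work to sit, is reconciling~(i) and~(ii): a direction with few crossing edges of $E$ need not split the in-facet edges evenly, and a direction that splits them evenly need not be free of crossing edges. The resolution is a finer case analysis on the shape of $E$---roughly, whether $E$ consists of a few long paths spread over many directions or of many short paths concentrated in few directions---coupled with the correspondingly strengthened induction hypothesis that allows a bounded number of prescribed path endpoints so that crossing edges of $E$ can be absorbed. A subsidiary but routine point is verifying that the enlarged base cases genuinely cover every configuration the clean inductive step cannot reach, which may warrant a modest computer check; and I would not worry about tightness of the bound $2d-3$, as it is not needed for the application.
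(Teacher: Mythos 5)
The paper does not prove this lemma at all: it is quoted verbatim from the cited reference (Dvo\v{r}\'ak's theorem on Hamilton cycles with prescribed edges in hypercubes), so there is no in-paper argument to compare against. Judged on its own terms, your proposal correctly identifies the strategy used in that reference --- induction on $d$ via a split into two facets $Q^0,Q^1$ joined by a dimension-$j$ matching --- but it is a plan rather than a proof, and the two places where you yourself locate ``most of the work'' are genuine, unresolved gaps. First, the budget arithmetic does not close as stated: pigeonhole gives a direction $j$ with at most one edge of $E$ crossing, but the remaining (up to) $2d-4$ edges may then all lie in one facet, exceeding the inductive allowance of $2d-5$; conversely a direction that balances $|E_0|$ and $|E_1|$ may carry many crossing edges. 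You name this tension but resolve it only by appeal to ``a finer case analysis,'' which is precisely the content of the theorem. Second, the gluing step is not justified: having built $C^0$ through $E_0$, you need an edge $uu'$ of $C^0$ avoiding $E$ whose $j$-translate $vv'$ can be forced into $C^1$; doing so consumes prescribed-edge budget in $Q^1$, may break the condition that the prescribed edges form vertex-disjoint paths (if $vv'$ touches $E_1$), and in the case $E_j\neq\emptyset$ also forces a specific vertex pair, which the plain inductive statement cannot accommodate. You correctly diagnose that a strengthened induction hypothesis is needed (Hamilton paths with prescribed endpoints through prescribed edges is indeed the right shape), but you never formulate it, so it cannot be checked that the strengthened statement is self-propagating with the stated numerical bound.

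In short: the approach is the right one and the obstacles are accurately identified, but the proposal defers exactly the parts that constitute the proof --- the precise strengthened inductive statement, the verification that its parameters survive the split, and the case analysis reconciling the choice of splitting direction with the placement of the reconnection edges. As it stands this is a credible research outline for reproving the cited result, not a proof of it; for the purposes of this paper the correct move is the one the authors make, namely to invoke the reference.
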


For integers~$a\geq 1$ and~$d\geq 1$ we define $S(a,d)$ as the set of all $a$-tuples of non-decreasing integers from the set~$\{1,\ldots,d\}$, i.e., $S(a,d)=\{(j_1,\ldots,j_a)\mid 1\leq j_1\leq j_2\leq \cdots\leq j_a\leq d\}$.
Furthermore, we let $G(a,d)$ be the graph with vertex set~$S(a,d)$ and edges between any two $a$-tuples that differ in a single entry by~$\pm 1$; see Figure~\ref{fig:Gad}.

\begin{figure}[h!]
\includegraphics{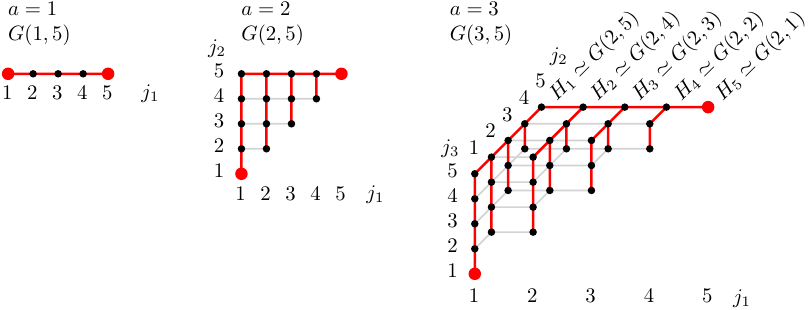}
\caption{Illustration of the graph~$G(a,d)$ and Lemma~\ref{lem:Gad}.
The spanning trees and the extremal vertices are highlighted.}
\label{fig:Gad}
\end{figure}

\begin{lemma}
\label{lem:Gad}
For any $a\geq 1$ and~$d\geq 1$, the graph~$G(a,d)$ has a spanning tree~$T$ with $\Delta(T)\leq 3$.
\end{lemma}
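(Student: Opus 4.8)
The plan is to prove a slightly stronger statement by induction on $a+d$, from which the lemma follows. For the base cases $a=1$ the graph $G(1,d)$ is the path $1-2-\cdots-d$, and for $d=1$ the graph $G(a,1)$ is a single vertex; in either case the graph itself is a tree of maximum degree at most $2$. For the inductive step assume $a,d\geq 2$ and split the vertex set $S(a,d)$ according to the value of the first coordinate $j_1$: the tuples with $j_1=1$ form a set $A$ that, after deleting the leading entry, is in bijection with $S(a-1,d)$, with induced subgraph $\simeq G(a-1,d)$; the tuples with $j_1\geq 2$ form a set $B$ that, after decreasing every entry by $1$, is in bijection with $S(a,d-1)$, with induced subgraph $\simeq G(a,d-1)$. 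A short case check shows that the only edges of $G(a,d)$ between $A$ and $B$ join $(1,j_2,\ldots,j_a)$ to $(2,j_2,\ldots,j_a)$ for non-decreasing $(j_2,\ldots,j_a)$ with $j_2\geq 2$; one such edge joins $u:=(1,2,\ldots,2)\in A$ to $w:=(2,2,\ldots,2)\in B$. The spanning tree for $G(a,d)$ will be $T:=T_A\cup T_B\cup\{uw\}$, where $T_A,T_B$ are the spanning trees supplied by the induction hypothesis for $A$ and $B$.

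To keep $\Delta(T)\leq 3$ the induction must record where low-degree vertices sit. The strengthened hypothesis I would carry is that the spanning tree of $G(a,d)$ has $\deg((1,1,\ldots,1))\leq 1$ (which is automatic, since $(1,\ldots,1)$ already has degree $1$ in $G(a,d)$ when $d\geq 2$) and $\deg((2,2,\ldots,2))\leq 2$. The gluing works because of how the distinguished vertices match up under the two bijections: the endpoint $u=(1,2,\ldots,2)$ of the glue edge corresponds in $G(a-1,d)$ to its vertex $(2,\ldots,2)$, so $\deg_{T_A}(u)\leq 2$ by hypothesis; and the endpoint $w=(2,\ldots,2)$ corresponds in $G(a,d-1)$ to its vertex $(1,\ldots,1)$, so $\deg_{T_B}(w)\leq 1$. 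Adding $uw$ therefore raises these two degrees to at most $3$ and at most $2$ and leaves all other degrees unchanged, so $\Delta(T)\leq 3$, and $T$ is clearly a spanning tree of $G(a,d)$.

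It remains to re-establish the strengthened hypothesis for $G(a,d)$. The vertex $(1,\ldots,1)$ lies in $A$, is distinct from $u$ (as $a\geq 2$), and is untouched by the glue edge, so its degree is unchanged and equals $1$. The vertex $(2,\ldots,2)$ lies in $B$ and is exactly the endpoint $w$ of the glue edge, with $\deg_{T_B}(w)\leq 1$, so $\deg_T((2,\ldots,2))\leq 2$, as required. Degenerate cases (such as $d=2$, when $B$ is a single vertex, or small $a,d$) are covered by the base cases or are immediate. The only nonobvious ingredient is the choice of strengthened hypothesis: one has to connect the ``near-corner'' $(1,2,\ldots,2)$ of $A$---whose bounded degree is precisely the content of the carried hypothesis---to the genuine corner $(2,\ldots,2)$ of $B$---whose bounded degree is free---and then observe that this same corner of the outer graph is exactly the vertex at which the hypothesis must be reproven. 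Identifying these ``extremal'' vertices and checking that the two bijections line them up correctly is the heart of the argument.
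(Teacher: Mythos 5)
Your proof is correct, but it takes a genuinely different route from the paper. The paper splits $S(a,d)$ by the value of the first coordinate into $d$ slices $H_i\simeq G(a-1,d-i+1)$, takes spanning trees of each by induction on $a$ alone, and chains them in a path via the edges $\big((i,d^{a-1}),(i+1,d^{a-1})\big)$; the degree bookkeeping is free because the extremal vertices $1^a$ and $d^a$ have degree~$1$ in the graph $G(a,d)$ itself, so the connector endpoints contribute at most $1+2=3$. You instead make a binary split into $\{j_1=1\}\simeq G(a-1,d)$ and $\{j_1\geq 2\}\simeq G(a,d-1)$, glue with the single edge from $(1,2^{a-1})$ to $2^a$, and pay for the fact that $(1,2^{a-1})$ is \emph{not} an extremal vertex of its piece by carrying the strengthened invariant $\deg_T(2^a)\leq 2$ through the induction; your verification that the two bijections line up the relevant corners ($2^{a-1}$ in the first piece, $1^a$ in the second) is exactly right, and the invariant is correctly re-established. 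Both constructions also keep $1^a$ and $d^a$ at degree~$1$ in the tree (automatically, since they have degree~$1$ in $G(a,d)$), which is the extra property the paper uses downstream in its Lemma~\ref{lem:zigzag}. The paper's version buys a hypothesis-free induction at the cost of a $d$-way decomposition; yours buys a cleaner two-piece, one-glue-edge recursion at the cost of identifying the right strengthened hypothesis.
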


We refer to the vertices~$1^a$ and~$d^a$ as \defi{extremal} vertices, and note that they have degree~1 in~$G(a,d)$, unless $a=d=1$, in which case the graph is a single vertex having degree~0.

\begin{proof}
We argue by induction on~$a$ and~$d$.
For $a=1$ and any $d\geq 1$, the graph~$G(a,d)$ is the path on $d$ vertices, so the claim is trivially true.
For the induction step let~$a\geq 2$.
We split~$G(a,d)$ into subgraphs $H_i$ for $i=1,\ldots,d$ where $H_i$ contains all vertices in which the first coordinate equals~$i$.
Note that $H_i\simeq G(a-1,d-(i-1))$ for all $i=1,\ldots,d$, in particular $H_d\simeq G(a-1,1)$ is a single vertex.
By induction, $H_i$ has a spanning tree~$T_i$ with $\Delta(T_i)\leq 3$ for all $i=1,\ldots,d$.
Furthermore, the two extremal vertices have degree~1 in~$T_i$ for $i=1,\ldots,d-1$ and degree~0 in~$T_d$.
We join the trees~$T_i$ to a single spanning tree~$T$ of~$G(a,d)$ by adding the edges~$\big((i,d^{a-1}),(i+1,d^{a-1})\big)$ for $i=1,\ldots,d-1$ between their extremal vertices.
\end{proof}

\section{Colorful triangulations}
\label{sec:colorful}

In this section we consider the setting of colorful triangulations introduced by Sagan, with the goal of proving Theorems~\ref{thm:FN-ham} and~\ref{thm:Falpha-ham}.

\subsection{Alternating colors}
\label{sec:alternating}

We first assume that the number~$N$ of points is even and the coloring sequence is~$\alpha=1^N$, i.e., the coloring pattern along the points $1,\ldots,N$ is $\tr\tb\tr\tb\cdots \tr\tb=(\tr\tb)^{N/2}$.
Recall that~$\cC_N$ denotes the set of all colorful triangulations with this coloring sequence.

\begin{figure}
\includegraphics[page=2]{fn}
\caption{Flip graphs of colorful triangulations and reduced graphs for $N=4,5,6,7$.
A Hamilton path in~$\cF_7$ is highlighted.}
\label{fig:F4567}
\end{figure}

\torsten{Make sure lemma references in the pictures are up-to-date}

\begin{figure}
\includegraphics[page=3]{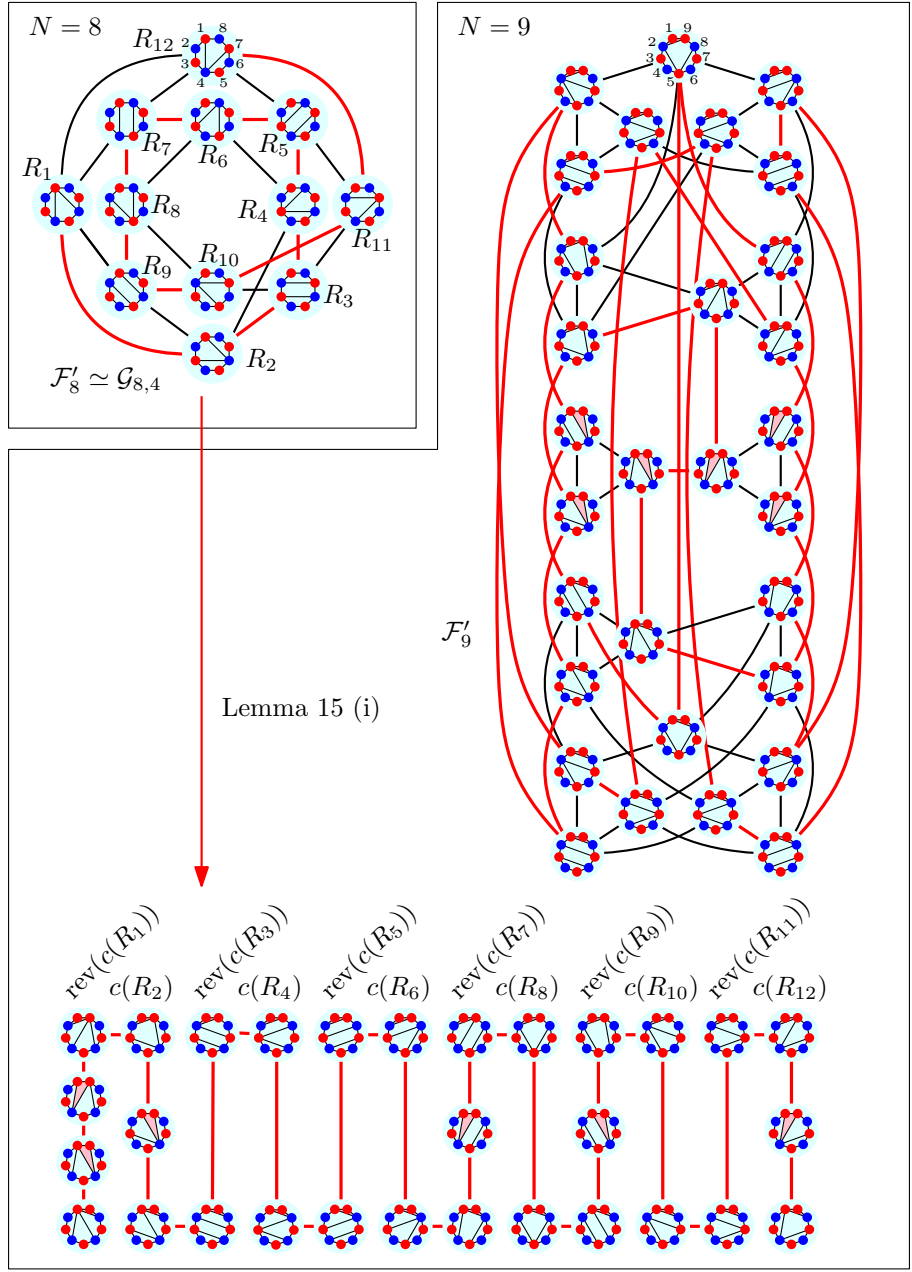}
\caption{Reduced graphs of colorful triangulations for $N=8,9$.}
\label{fig:F89}
\end{figure}

Let $T\in\cC_N$ be a colorful triangulation.
We say that an edge of~$T$ is \defi{monochromatic} if both endpoints have the same color, and we say that it is \defi{colorful} if both endpoints have distinct colors.
We observe the following:
\begin{enumerate}[label=(\roman*),leftmargin=8mm, noitemsep, topsep=3pt plus 3pt]
\item Every triangle of~$T$ has exactly one monochromatic edge.
\item Every monochromatic edge of~$T$ is an inner edge.
\end{enumerate}
Consequently, if we remove from~$T$ all monochromatic edges, keeping only the colorful ones, then the resulting dissection~$r(T)$ is a quadrangulation on the point set.
Indeed, by (i) every triangle is destroyed, and by (i)+(ii) destroying a triangle creates a quadrangle.
While $T$ has $N-2=n$ triangles, $r(T)$ has $q:=(N-2)/2=n/2$ quadrangles.
Furthermore, there are $2^q$ many colorful triangulations that yield the same quadrangulation~$r(T)$ by removing monochromatic edges.
They are obtained from~$r(T)$ by placing a diagonal in each of the $q$ quadrangles in one of the two ways.
Note that the subgraph of~$\cF_N$ induced by those $2^q$ triangulations is isomorphic to the $q$-dimensional hypercube~$Q_q$, as each of the $q$ monochromatic edges in~$T$ can be flipped independently from the others.
We thus obtain a partition of~$\cF_N$ into hypercubes~$Q_q$, plus edges between them.
These copies of hypercubes are highlighted by blue bubbles in Figure~\ref{fig:F4567}.

We also note that every quadrangulation~$R$ on $N$ points equals~$r(T)$ for some colorful triangulation~$T\in\cC_N$.
Indeed, given $R$, then coloring the $N$ points red and blue alternatingly will make all edges colorful.
We define a reduced graph~$\cF_N'$ that has as nodes all quadrangulations on $N$ points, and for any two colorful triangulations~$T$ and~$T'$ that differ in a flip of a colorful edge, we add an edge between~$r(T)$ and~$r(T')$ in~$\cF_N'$; see Figures~\ref{fig:F4567} and~\ref{fig:F89}.
We observe that $\cF_N'$ is isomorphic to the flip graph of quadrangulations~$\cG_{N,4}$, i.e., we have $\cF_N'\simeq \cG_{N,4}$.
These arguments yield a direct combinatorial proof for the first equality in Theorem~\ref{thm:count}.

\begin{lemma}
\label{lem:FN-ham-even}
Let $q\geq 3$ be an integer and $N:=2q+2$, and let $\cS$ be a spanning tree of~$\cF_N'$.
\begin{enumerate}[label=(\roman*),leftmargin=8mm, noitemsep, topsep=1pt plus 1pt]
\item If $q=3$ and $\Delta(\cS)\leq 2$ (i.e., $\cS$ is a Hamilton path), then $\cF_N$ has a Hamilton cycle.
\item If $q\geq 4$ and $\Delta(\cS)\leq 3$, then $\cF_N$ has a Hamilton cycle.
\end{enumerate}
\end{lemma}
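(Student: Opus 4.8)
The plan is to assemble the Hamilton cycle of~$\cF_N$ from Hamilton cycles of the hypercube blocks, glued to one another along the spanning tree~$\cS$. Recall the decomposition described above: $V(\cF_N)$ partitions into blocks~$Q_R$, one for every quadrangulation~$R$ on the $N$~points, where $Q_R$ consists of the $2^q$ colorful triangulations~$T$ with $r(T)=R$ and induces a copy of the cube~$Q_q$ in~$\cF_N$ (the $q$~monochromatic edges flip independently), and where the edges between distinct blocks project precisely onto the edges of~$\cF_N'\simeq\cG_{N,4}$.

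The first and main step is to describe the cross-edges between two blocks~$Q_R$ and~$Q_{R'}$ with $\{R,R'\}\in E(\cF_N')$. Such $R$ and~$R'$ differ by a flip confined to the hexagon~$H$ spanned by the two quadrangles of~$R$ incident to the flipped edge. Since every edge of~$R$ is colorful, each quadrangle of~$R$ is properly $2$-colored, hence so is~$H$; that is, the six points of~$H$ alternate in color. Therefore a triangulation of~$H$ is colorful if and only if it avoids both triangles spanned by three equally colored points, and since each of these two triangles occurs in exactly one triangulation of~$H$, the hexagon has exactly $14-2=12$ colorful triangulations. Each of them contains exactly one of the three long diagonals of~$H$, so under~$r$ they fall into three groups of four. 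Inspecting these twelve triangulations shows that, once the monochromatic edges outside~$H$ are fixed, exactly one triangulation in the group of~$R$ admits a colorful flip into the group of~$R'$, and it flips into a unique triangulation there, while all other relevant flips stay inside a block. Hence the cross-edges between~$Q_R$ and~$Q_{R'}$ constitute a matching that realizes a hypercube isomorphism between a subcube~$Q_{q-2}$ of~$Q_R$ and a subcube~$Q_{q-2}$ of~$Q_{R'}$ (it fixes the $q-2$ monochromatic edges outside~$H$). This hexagon bookkeeping is routine, but it is the technical core of the proof and also where the dimension hypotheses enter.

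Given this, for each edge~$\{R,R'\}$ of~$\cS$ I would choose in the matched subcubes two matched pairs $u\leftrightarrow u'$ and $w\leftrightarrow w'$ with $u,w$ differing in one of the $q-2$ coordinates lying outside~$H$ (possible because $q-2\geq1$), so that $u,w,w',u'$ form a $4$-cycle in~$\cF_N$ with $\{u,w\}\in E(Q_R)$, $\{u',w'\}\in E(Q_{R'})$, and $\{u,u'\},\{w,w'\}$ cross-edges. Call $g_R:=\{u,w\}$ the \emph{port} of~$R$ towards~$R'$. At a node~$R$ of degree~$d\leq\Delta(\cS)$, its $d$~ports are edges of~$Q_R\simeq Q_q$, and since~$Q_q$ is bipartite, a family of at most three of them fails to be a union of vertex-disjoint paths only if some vertex lies in three of them. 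For $q\geq4$ each matched subcube has at least two free coordinates, leaving enough freedom to pick the ports so this does not happen; moreover $d\leq3\leq2q-3$. For $q=3$ each matched subcube is a single edge, so the port to a given neighbor is forced, but when~$\cS$ is a Hamilton path a node carries at most two ports, and these always span distinct coordinate directions of~$Q_3$ (the two incident $\cF_N'$-flips involve either different quadrangles of~$R$, or the same diagonal of~$R$, in which case they leave distinct choices for the two other monochromatic edges), hence are distinct edges; also $2\leq2q-3=3$. This is precisely why case~(i) requires $\Delta(\cS)\leq2$ rather than~$3$.

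Finally, one applies Lemma~\ref{lem:edge-ham} to each block~$Q_R$ (with $d=q\geq3\geq2$ there) to obtain a Hamilton cycle of~$Q_R$ through all of its ports, and then processes the edges of~$\cS$ in any order: for~$\{R,R'\}$, delete the ports~$g_R$ and~$g_{R'}$ from the current $2$-factor and insert the two cross-edges~$\{u,u'\}$ and~$\{w,w'\}$ of the associated $4$-cycle. Because~$\cS$ is a tree, at the moment~$\{R,R'\}$ is processed the edges~$g_R$ and~$g_{R'}$ lie in two different cycles of the current $2$-factor, so this exchange merges those two cycles into one and creates no shorter cycle; and since ports at a common node belonging to different tree edges are distinct edges, no port is removed before it is used. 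After all $v(\cF_N')-1$ exchanges the $v(\cF_N')$ individual cycles have merged into one, which is a Hamilton cycle of~$\cF_N$ since the blocks~$Q_R$ partition~$V(\cF_N)$.
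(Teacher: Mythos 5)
Your proposal is correct and follows essentially the same route as the paper's proof: decompose $\cF_N$ into hypercube blocks $Q(R)\simeq Q_q$, replace each spanning-tree edge by a $4$-cycle consisting of two cross-edges ("connectors") and two block edges ("shortcuts"/ports) chosen inside the forced $(q-2)$-dimensional subcubes, apply Lemma~\ref{lem:edge-ham} to route a Hamilton cycle of each block through its ports, and merge via the $4$-cycle exchanges. Your hexagon bookkeeping and the distinctness argument for the forced ports when $q=3$ are just more explicit versions of details the paper leaves implicit.
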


The conclusions of the lemma do not hold when~$q=2$:
Indeed, while $\cF_6'$ has a Hamilton path (the graph is a triangle), there is no Hamilton path or cycle in~$\cF_6$; see Figure~\ref{fig:F4567}.

\begin{proof}
The idea is to `uncompress' the spanning tree~$\cS$ in~$\cF_N'$ to a Hamilton cycle in~$\cF_N$.
Specifically, for any quadrangulation~$R$ we consider the $2^q$ colorful triangulations~$C(R):=\{T\in\cC_N\mid r(T)=R\}$, and we let $Q(R)$ denote the subgraph of~$\cF_N$ spanned by the triangulations in~$C(R)$.
Recall that $Q(R)\simeq Q_q$, i.e., $Q(R)$ is isomorphic to the $q$-dimensional hypercube.
In the first step of the uncompression, we replace each edge $(R,R')$ of~$\cS$ by two edges~$(T_1,T_1')$, $(T_2,T_2')$ with $T_1,T_2\in C(R)$ and $T_1',T_2'\in C(R')$.
We refer to the edges $(T_1,T_1')$ and~$(T_2,T_2')$ as \defi{connectors}, and to their end nodes~$T_1,T_2,T_1',T_2'$ as \defi{terminals}.
In the second step, each quadrangulation~$R$ with degree~$d$ in~$\cS$ is replaced by $d$ paths that together visit all nodes in~$Q(R)$ and which join the connectors at their terminals to a single Hamilton cycle.

We now describe both steps in detail.
For a given quadrangulation~$R$, we associate each of the colorful triangulations~$T\in C(R)$ by a bitstring~$b(T)\in\{0,1\}^q$ as follows:
We label the $q$ quadrangles of~$R$ arbitrarily by $j=1,\ldots,q$, and we define $b(T)_j:=0$ if the monochromatic edge of~$T$ that sits inside the $j$th quadrangle of~$R$ connects the two red points, and otherwise (if it connects two blue points) $b(T)_j:=1$.

\begin{figure}[h!]
\includegraphics[page=5]{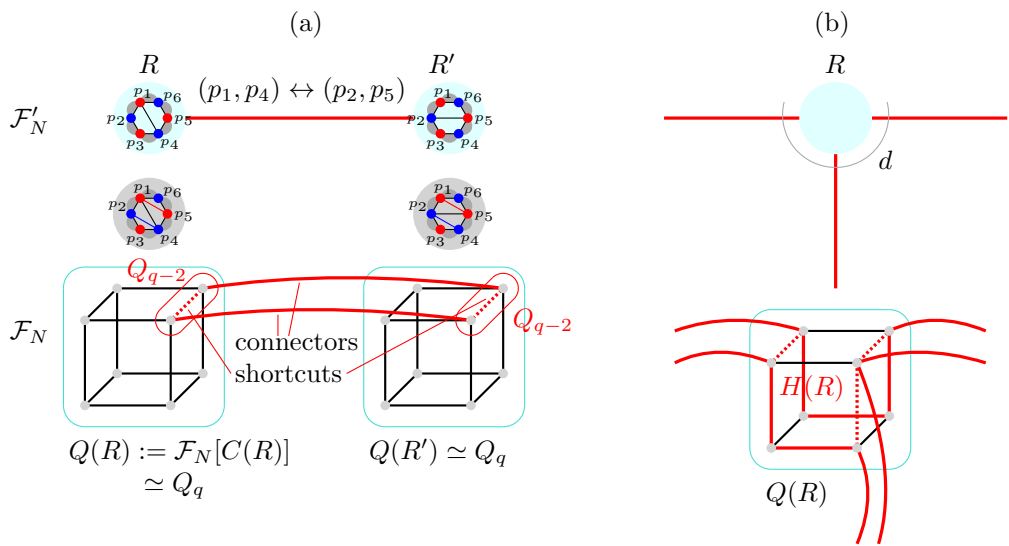}
\caption{Illustration of the proof of Lemma~\ref{lem:FN-ham-even}.}
\label{fig:FN-ham}
\end{figure}

Now consider an edge~$(R,R')$ of~$\cS$, which we aim to replace by two connectors~$(T_1,T_1')$, $(T_2,T_2')$ with $T_1,T_2\in C(R)$ and $T_1',T_2'\in C(R')$.
We denote the edge flipped in~$R$ by~$(p_1,p_4)$, and we label the points of the adjacent 4-gons in circular order by~$p_1,p_2,p_3,p_4$ and~$p_4,p_5,p_6,p_1$, respectively, such that the edge~$(p_1,p_4)$ is replaced by~$(p_2,p_5)$; see Figure~\ref{fig:FN-ham}~(a).
It follows that $T_1,T_1',T_2,T_2'$ must be triangulations that contain the two monochromatic edges~$(p_2,p_4)$ and~$(p_5,p_1)$ (but neither $(p_1,p_3)$ nor~$(p_4,p_6)$).
Consequently, the two corresponding bits of~$b(T_1),b(T_1'),b(T_2),b(T_2')$ must have a prescribed value, and therefore $T_1,T_2$ and $T_1',T_2'$ can be chosen from a $(q-2)$-dimensional subcube of~$Q(R)$ and~$Q(R')$, respectively.
Also, we will choose the connectors so that the pairs of terminals $(T_1,T_2)$ and $(T_1',T_2')$ differ only in a single flip, i.e., we select the two pairs of terminals as edges in their respective cubes, and we call these edges in~$Q(R)$ and~$Q(R')$ \defi{shortcuts}.
Note that the two connectors with the two shortcuts form the 4-cycle~$(T_1,T_1',T_2',T_2)$.
By the assumption $q\geq 3$ we have $q-2\geq 1$, i.e., there is at least one choice for each prescribed edge.

If the node~$R$ has degree~$d$ in~$\cS$, then we have to choose $d$ distinct shortcut edges in the hypercube~$Q(R)$, each selected from a distinct (but not necessarily disjoint) $(q-2)$-dimensional subcube, and to find a Hamilton cycle~$H(R)$ in~$Q(R)$ that contains all of these edges; see Figure~\ref{fig:FN-ham}~(b).
By Lemma~\ref{lem:edge-ham}, it is enough to ensure that the shortcut edges together form paths in~$Q(R)$.
If $d\leq 2$ (case~(i) of the lemma), then this is clear, as one or two edges always form one or two paths.
If $d=3$ (case~(ii) of the lemma), one has to avoid that all three shortcut edges are incident to the same node, which is easily possible under the stronger assumption~$q\geq 4$.

Then the Hamilton cycle in~$\cF_N$ is obtained by taking the symmetric difference of the edge sets of the cycles~$H(R)\seq Q(R)$ for all quadrangulations~$R$ on $N$ points with the 4-cycles formed by the connectors and shortcuts (i.e., the shortcuts are removed, and the connectors are added instead).
This completes the proof.
\end{proof}

\subsection{General coloring patterns}
\label{sec:general}

We now consider an arbitrary coloring sequence~$\alpha=(\alpha_1,\ldots,\alpha_\ell)$ and the corresponding coloring pattern defined in~\eqref{eq:col-pat}.
Recall that $\cC_\alpha$ denotes the set of all colorful triangulations with this coloring pattern, and that the corresponding flip graph is denoted by~$\cF_\alpha$.
The graph~$\cF_\alpha$ is an induced subgraph of the associahedron~$\cG_N$, where $N=\sum_{i=1}^\ell \alpha_i$.
As in the previous section, a colorful triangulation~$T\in\cC_\alpha$ has two types of edges, namely monochromatic and colorful edges.
We write $E_\alpha$ for the set of boundary edges that are monochromatic in~$T$, i.e., these are the pairs of points~$(i,i+1)$ for $i=1,\ldots,N$ (modulo $N$) where both endpoints receive the same color.
Generalizing the discussion from the previous section, we observe the following:
\begin{enumerate}[label=(\roman*),leftmargin=8mm, noitemsep, topsep=3pt plus 3pt]
\item Every triangle of~$T$ has exactly one monochromatic edge.
\item Apart from the edges in~$E_\alpha$, every monochromatic edge of~$T$ is an inner edge.
\end{enumerate}
Consequently, if we remove from~$T$ all monochromatic inner edges (the edges in~$E_\alpha$ are boundary edges and hence not removed), keeping only the colorful ones, then the resulting dissection~$r(T)$ has $t:=N-\ell$ triangles that contain the edges in~$E_\alpha$ and $q:=(\ell-2)/2$ quadrangles.
Furthermore, there are $2^q$ many colorful triangulations that yield the same dissection~$r(T)$ by removing monochromatic edges.
They are obtained from~$r(T)$ by placing a diagonal in each of the $q$ quadrangles in one of the two ways.
Note that the subgraph of~$\cF_\alpha$ induced by those~$2^q$ triangulations is isomorphic to the $q$-dimensional hypercube~$Q_q$.
We thus obtain a partition of~$\cF_\alpha$ into hypercubes~$Q_q$, plus edges between them.

We refer to a dissection of a convex $N$-gon into $q$ quadrangles and $t$ triangles that contain all the edges of~$E_\alpha$ as an \defi{$\alpha$-angulation}, and we write $\cD_\alpha$ for the set of all such dissections.
We also note that every $\alpha$-angulation~$R$ on $N$ points equals~$r(T)$ for some colorful triangulation~$T\in\cC_\alpha$.
Indeed, given $R$, then coloring the $N$ points according to the pattern in~\eqref{eq:col-pat} will make all edges apart from the ones in~$E_\alpha$ colorful.
We define a reduced graph~$\cF_\alpha'$ that has as nodes all $\alpha$-angulations on $N$ points, and for any two colorful triangulations $T$ and~$T'$ that differ in a flip of a colorful edge, we add an edge between~$r(T)$ and~$r(T')$ in~$\cF_\alpha'$; see Figures~\ref{fig:F4567}, \ref{fig:F89} and~\ref{fig:Falpha}.

\begin{figure}[h!]
\includegraphics[page=4]{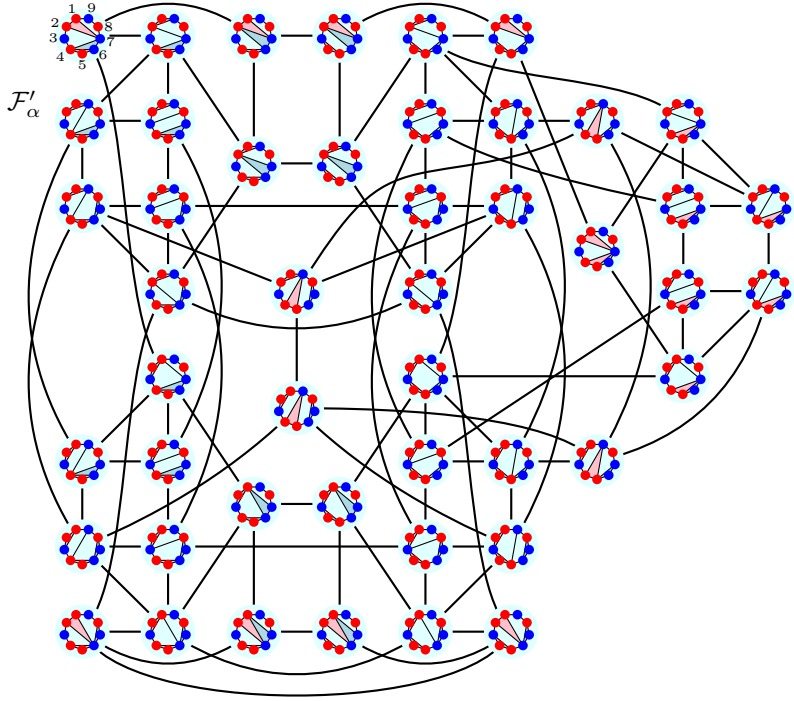}
\caption{Reduced graph of colorful triangulations for the coloring sequence $\alpha=(2,1,2,2,1,1)$ (\tr\tr\tb\tr\tr\tb\tb\tr\tb).}
\label{fig:Falpha}
\end{figure}

The proof of Lemma~\ref{lem:FN-ham-even} presented in the previous section generalizes straightforwardly, yielding the following statement.
Note that the variable~$\ell$ in Lemma~\ref{lem:FN-ham-alpha} below plays the role of~$N=2q+2$ in Lemma~\ref{lem:FN-ham-even}, and so the assumptions $\ell=8$ and~$\ell\geq 10$ translate to $q=(\ell-2)/2=3$ and~$q\geq 4$ used in the proof of Lemma~\ref{lem:FN-ham-even}, respectively.

\begin{lemma}
\label{lem:FN-ham-alpha}
Let $\alpha=(\alpha_1,\ldots,\alpha_\ell)$ be a coloring sequence of (even) length~$\ell\geq 8$, and let $\cS$ be a spanning tree of~$\cF_\alpha'$.
\begin{enumerate}[label=(\roman*),leftmargin=8mm, noitemsep, topsep=1pt plus 1pt]
\item If $\ell=8$ and $\Delta(\cS)\leq 2$ (i.e., $\cS$ is a Hamilton path), then $\cF_\alpha$ has a Hamilton cycle.
\item If $\ell\geq 10$ and $\Delta(\cS)\leq 3$, then $\cF_\alpha$ has a Hamilton cycle.
\end{enumerate}
\end{lemma}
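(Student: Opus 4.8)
Since the text preceding Lemma~\ref{lem:FN-ham-alpha} asserts that the proof of Lemma~\ref{lem:FN-ham-even} ``generalizes straightforwardly'', the plan is to reuse that ``uncompression'' construction essentially verbatim, now with $q:=(\ell-2)/2$ playing the role of the parameter~$q$ there. Recall from the discussion preceding the statement that $\cF_\alpha$ decomposes into the fibers $C(R):=\{T\in\cC_\alpha\mid r(T)=R\}$ over all $\alpha$-angulations $R\in\cD_\alpha$, that the subgraph $Q(R)$ of $\cF_\alpha$ induced by $C(R)$ is isomorphic to the cube~$Q_q$ (one coordinate per quadrangle of~$R$, recording which of its two monochromatic diagonals is present), and that every edge of $\cF_\alpha$ not lying inside a single fiber is a flip of a colorful edge, these being exactly the edges that project onto the edges of~$\cF_\alpha'$. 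Given the spanning tree~$\cS$ of~$\cF_\alpha'$, I would first replace every edge $(R,R')$ of~$\cS$ by two \emph{connectors} $(T_1,T_1')$ and $(T_2,T_2')$ with $T_1,T_2\in C(R)$ and $T_1',T_2'\in C(R')$, chosen so that $(T_1,T_2)$ is an edge of~$Q(R)$, $(T_1',T_2')$ is an edge of~$Q(R')$, and the four triangulations form a $4$-cycle $(T_1,T_1',T_2',T_2)$; the two cube edges $(T_1,T_2)$ and $(T_1',T_2')$ are called \emph{shortcuts}. Then, at every $\alpha$-angulation~$R$, I would route through $Q(R)$ a Hamilton cycle $H(R)$ using all the shortcuts created at~$R$, and finally take the symmetric difference of all the cycles~$H(R)$ with all the connector $4$-cycles (deleting shortcuts and inserting connectors), obtaining a Hamilton cycle of~$\cF_\alpha$ exactly as in the proof of Lemma~\ref{lem:FN-ham-even}.

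The only point where the presence of rigid triangles in an $\alpha$-angulation could cause a deviation from the alternating-colors case is the local analysis of a colorful-edge flip, so this is the step I would carry out in detail. Let $T,T'\in\cC_\alpha$ differ in the flip of a colorful edge~$e$, and let $a,b,c,d$ (in circular order) be the four vertices of the two triangles of~$T$ meeting along~$e$, with $e=(a,c)$ and the flipped edge $e'=(b,d)$. Since $e$ is colorful we may assume $a$ is red and $c$ is blue; and if $b,d$ had the same color then one of the new triangles $\{a,b,d\}$, $\{b,c,d\}$ would be monochromatic, contradicting $T'\in\cC_\alpha$, so $b,d$ receive opposite colors, say $b$ red and $d$ blue up to reflecting the $4$-gon. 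Then the unique monochromatic edge of each of the four triangles $\{a,b,c\}$, $\{a,c,d\}$, $\{a,b,d\}$, $\{b,c,d\}$ is either $(a,b)$ or $(c,d)$; hence both $(a,b)$ and $(c,d)$ are monochromatic edges present in~$T$ and in~$T'$ alike. Each of these two edges is either a boundary edge lying in~$E_\alpha$, which imposes no condition, or a monochromatic inner edge, in which case it is the diagonal of a uniquely determined quadrangle of~$R$ and fixes one coordinate of~$Q(R)$. Consequently $T_1,T_2$ (and likewise $T_1',T_2'$) must be chosen from a subcube of~$Q(R)$ of dimension at least $q-2\ge 1$, using $\ell\ge 8$; in particular such a subcube contains an edge, so a shortcut can always be chosen. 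This is precisely the $(q-2)$-dimensional subcube appearing in the proof of Lemma~\ref{lem:FN-ham-even}, the only difference being that here the subcube can be strictly larger whenever $(a,b)$ or $(c,d)$ belongs to~$E_\alpha$.

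With this in hand, the remaining step---routing the Hamilton cycles $H(R)$---is identical to the alternating-colors case. Fix~$R$ with $d:=\deg_{\cS}(R)\le\Delta(\cS)\le 3$; the edges of~$\cS$ incident to~$R$ have created $d$ shortcuts in $Q(R)\simeq Q_q$, each lying in a subcube of dimension $\ge q-2$. By Lemma~\ref{lem:edge-ham} it suffices to ensure that these $d$ shortcuts form vertex-disjoint paths and that $d\le 2q-3$; the latter holds because $q\ge 3$. If $d\le 2$ the shortcuts form at most two disjoint paths automatically, which settles case~(i) entirely (there $\Delta(\cS)\le 2$) as well as all vertices of degree at most~$2$ in case~(ii). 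The only remaining situation is $d=3$ in case~(ii), where $q\ge 4$ so that each of the three subcubes has dimension $\ge q-2\ge 2$; one then selects the three shortcuts so that they do not all meet at a common vertex of~$Q(R)$, which is possible because a subcube of dimension $\ge 2$ contains an edge avoiding any prescribed vertex---and note that the $q=3$ analogue genuinely can fail, which is exactly why case~(ii) needs $\ell\ge 10$. Taking the symmetric difference of all~$H(R)$ with the connector $4$-cycles then yields a Hamilton cycle of~$\cF_\alpha$, as every argument of the proof of Lemma~\ref{lem:FN-ham-even} applies unchanged with $q=(\ell-2)/2$. I expect the colour bookkeeping in the second paragraph---verifying that a colorful-edge flip constrains at most two cube coordinates even when one or both of the two triangles involved become rigid triangles of the $\alpha$-angulation---to be the only genuinely new point; once it is settled, the rest is carried over verbatim.
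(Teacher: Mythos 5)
Your proposal is correct and follows exactly the route the paper intends: the paper gives no separate proof of this lemma, asserting only that the proof of Lemma~\ref{lem:FN-ham-even} ``generalizes straightforwardly'' with $q=(\ell-2)/2$, and your argument is precisely that generalization (uncompression of $\cS$ via connectors, shortcuts in $(\geq q-2)$-dimensional subcubes, Lemma~\ref{lem:edge-ham}, symmetric difference). Your second paragraph---checking that a colorful-edge flip still prescribes at most two cube coordinates because each of the two monochromatic sides of the flip 4-gon either fixes a quadrangle diagonal or lies in $E_\alpha$ and imposes nothing---is exactly the one detail the paper leaves implicit, and you verify it correctly.
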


The next lemma allows us to duplicate the occurrence of a color that appears only once (i.e., we change $\alpha_i=1$ to some larger number $\alpha_i>1$), while inductively maintaining spanning trees with small degrees in the corresponding reduced flip graphs.

\begin{lemma}
\label{lem:zigzag}
Let $\beta=(\beta_1,\ldots,\beta_\ell)$ and $\alpha=(\alpha_1,\ldots,\alpha_\ell)$ be coloring sequences of (even) length~$\ell\geq 4$ that agree in all but the $i$th entry such that $\beta_i=1$ and $\alpha_i>1$.
\begin{enumerate}[label=(\roman*),leftmargin=8mm, noitemsep, topsep=1pt plus 1pt]
\item If $\cF_\beta'$ has a Hamilton path and $\alpha_i=2$, then $\cF_\alpha'$ has a Hamilton path.
\item If $\cF_\beta'$ has a spanning tree~$\cT$ with $\Delta(\cT)\leq 3$, then $\cF_\alpha'$ has a spanning tree~$\cS$ with $\Delta(\cS)\leq 3$.
\end{enumerate}
\end{lemma}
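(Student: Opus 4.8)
The plan is to understand how the structure of~$\cF_\alpha'$ changes when we replace~$\beta_i=1$ by~$\alpha_i=2$, and then iterate this to reach general~$\alpha_i>1$. I would first analyze the difference between~$\cD_\beta$ and~$\cD_\alpha$ combinatorially. Recall that the points colored by the block corresponding to index~$i$ form a single point~$p$ in the $\beta$-setting, whereas in the $\alpha$-setting this block is split into two adjacent points~$p',p''$ of the same color; the boundary edge~$(p',p'')$ lies in~$E_\alpha$ but has no analogue in~$E_\beta$. The first key step is to exhibit a natural map $\Phi\colon \cD_\alpha \to \cD_\beta$ obtained by contracting the boundary edge~$(p',p'')$ (equivalently, merging~$p'$ and~$p''$), and to identify its fibers. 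I expect that each $\beta$-angulation~$R$ has a fiber~$\Phi^{-1}(R)$ whose size equals the number of ways to ``re-split'' the point~$p$ in~$R$: the cell of~$R$ incident to~$p$ is a quadrangle or a triangle, and the new edge~$(p',p'')$ together with the position of its two endpoints relative to that cell gives a small number of choices, linearly ordered in a path-like fashion. Concretely I anticipate the fiber over~$R$ is a path in~$\cF_\alpha'$ (a ``zigzag'', matching the lemma's name), whose length is governed by the degree of~$p$ within~$R$, and that edges of~$\cF_\alpha'$ project either to edges of~$\cF_\beta'$ or to internal fiber edges.

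The second step is to assemble the spanning tree / Hamilton path of~$\cF_\alpha'$ from that of~$\cF_\beta'$. For part~(ii): given a spanning tree~$\cT$ of~$\cF_\beta'$ with~$\Delta(\cT)\le 3$, I would lift it by replacing each node~$R$ with its fiber path~$\Phi^{-1}(R)$ and each edge~$(R,R')$ of~$\cT$ with a single edge of~$\cF_\alpha'$ connecting appropriately chosen endpoints in the two fiber paths. The resulting subgraph spans~$\cF_\alpha'$; it is a tree provided we never close a cycle, which holds since~$\cT$ is a tree and the fibers are internally acyclic (paths). The degree bound needs care: a node~$R$ of degree~$3$ in~$\cT$ becomes a fiber path, and the three ``external'' edges must be attached to its vertices so that no vertex of the path accumulates degree~$4$. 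Since a path on~$\ge 2$ vertices can absorb three pendant attachments with maximum degree~$3$ (attach one at each endpoint and one in the middle, or if the path has only two vertices, two at one end and one at the other — wait, that gives degree~$3$ at one vertex plus its path-neighbor, still~$\le 3$), this is manageable; the only genuinely tight case is when a fiber is a single vertex, and I would check separately that this cannot coincide with a degree-$3$ node of~$\cT$, or re-route~$\cT$ locally to avoid it. For part~(i), the same construction with~$\cT$ a Hamilton path yields a spanning path of~$\cF_\alpha'$ as long as, when~$\alpha_i=2$, every fiber is itself a path (which I expect to be automatic here since~$\alpha_i=2$ creates exactly one new split point with a small, path-structured fiber) and the two pendant attachments at each internal node of the path land on the two endpoints of its fiber, so the lifted object is a single path; the two ends of the Hamilton path of~$\cF_\beta'$ become ends of the lifted path.

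The main obstacle I foresee is pinning down the fiber structure precisely — verifying that~$\Phi^{-1}(R)$ really is an induced path in~$\cF_\alpha'$ (no extra chords, correct adjacencies to neighboring fibers), and handling the edge cases where the cell of~$R$ at~$p$ is a triangle versus a quadrangle, and where~$p$ is the apex of several cells. A secondary subtlety is the degree bookkeeping in part~(ii) when a degree-$3$ vertex of~$\cT$ has a length-$1$ fiber; I would resolve this either by a local swap in~$\cT$ (trees with~$\Delta\le 3$ have enough flexibility) or by showing such a configuration does not arise. Once the fiber description is nailed down, both parts follow by the ``uncompression'' bookkeeping sketched above, in direct analogy with the proof of Lemma~\ref{lem:FN-ham-even}.
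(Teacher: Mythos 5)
Your overall strategy (contract the new monochromatic boundary edge, study the fibers, and ``uncompress'' a spanning tree of~$\cF_\beta'$ through them) is the right one, and your treatment of part~(i) matches the paper's: for $\alpha_i=2$ the fiber over~$R$ is indeed a path indexed by the edges $(p,q_1),\ldots,(p,q_d)$ at~$p$, and the zigzag concatenation of alternately reversed fiber paths gives the Hamilton path. However, part~(ii) has two genuine gaps. First, your degree bookkeeping assumes that an edge $(R,Q)$ of~$\cT$ can be lifted to an edge of~$\cF_\alpha'$ attached at an \emph{arbitrary} vertex of the fiber of~$R$ (``one at each endpoint and one in the middle''). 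That is not available: the only lifted adjacencies one can guarantee are between \emph{corresponding extremal} fiber elements, namely between the two inflations of the boundary edges $(p,p+1)$ and $(p,p-1)$, which exist in every dissection and hence commute with the flip. Moreover the choice of end is coupled across the edge: the lift joins the ``first'' end of $R$'s fiber to the ``first'' end of $Q$'s fiber, or ``last'' to ``last'', never mixed. So a degree-$3$ node of~$\cT$ forces its three external edges onto two fixed fiber vertices, each already carrying one internal fiber edge, and this must be arranged consistently along all of~$\cT$. The paper resolves this by partitioning $E(\cT)$ into two linear forests $\widecheck{\cT},\widehat{\cT}$ (possible since $\Delta(\cT)\le 3$) and routing all $\widecheck{\cT}$-edges through the first extremal vertices and all $\widehat{\cT}$-edges through the last ones; this global coloring step is missing from your argument, and without it the local ``absorb three pendants on a path'' claim does not go through.

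Second, your plan to reach general $\alpha_i>1$ by iterating the single-insertion step is not justified. After the first insertion the smaller sequence has $\beta_i\ge 2$, so the point receiving the next insertion has a same-colored boundary neighbor; the ``far'' end of its fiber then corresponds to inflating the edge to the apex of the triangle over that monochromatic boundary edge, and this apex is \emph{not} flip-invariant. Hence the lifted adjacency at that end of the fiber can fail between neighboring $\beta$-angulations, breaking the inductive step. The paper avoids iteration entirely: it inserts all $a=\alpha_i-1$ new points in one step, which makes the fiber over~$R$ isomorphic not to a path but to the staircase graph $G(a,d)$ of nondecreasing $a$-tuples in $\{1,\ldots,d\}$, and it invokes Lemma~\ref{lem:Gad} to get a spanning tree of each fiber with maximum degree~$3$ in which the two extremal vertices $1^a$ and $d^a$ have degree~$1$. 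Identifying the correct fiber $G(a,d)$ and supplying that spanning-tree lemma are the missing ingredients; your secondary worry about length-$1$ fibers is moot (every fiber has at least two extremal vertices since $d\ge 2$), but no local re-routing of~$\cT$ substitutes for the two missing structural facts above.
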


In Figure~\ref{fig:F4567}, part~(i) of this lemma is applied to construct a Hamilton path in~$\cF_7'$ from one in~$\cF_6'$.
Similarly, in Figure~\ref{fig:F89}, a Hamilton path in~$\cF_9'$ is constructed from one in~$\cF_8'$.

The idea for the proof of part~(i) is the same as the one used by Hurtado and Noy~\cite{MR1723053}.

\begin{proof}
We consider the point $p:=\sum_{j=1}^i \beta_j$ on the boundary, which is neighbored by two points~$p-1$ and~$p+1$ (modulo~$N=\sum_{j=1}^\ell \beta_j$) of the opposite color.
We also define $a:=\alpha_i-1$, i.e., we want to add $a$ points of the same color as $p$ next to $p$.
Let $R\in \cD_\beta$ be a $\beta$-angulation, and let $(p,q_1),(p,q_2),\ldots,(p,q_d)$ be the edges incident with the point~$p$ in~$R$ in counterclockwise order (all these edges are colorful), such that $q_1=p+1$ and $q_d=p-1$; see Figure~\ref{fig:zigzag}.
If $a=1$, then for $j=1,\ldots,d$ we let $R^j$ be the $\alpha$-angulation obtained from~$R$ by inflating the edge~$(p,q_j)$ to a triangle~$(p,p',q_j)$.
Specifically, the single point~$p$ is split into two consecutive points~$p$ and~$p'$ on the boundary joined by an edge, and $q_1,\ldots,q_j$ remain connected to~$p'$, whereas $q_j,q_{j+1},\ldots,q_d$ remain connected to~$p$.
More generally, we define $J(R):=\{(j_1,\ldots,j_a)\mid 1\leq j_1\leq j_2\leq \cdots \leq j_a\leq d\}$, $\jcheck(R):=1^a$ and $\jhat(R):=d^a$, and for any $(j_1,j_2,\ldots,j_a)\in J(R)$ we let $R^{(j_1,\ldots,j_a)}$ be the $\beta$-angulation obtained from~$R$ by inflating each of the edges~$(p,q_{j_1}),\ldots,(p,q_{j_a})$ to a triangle.
Note that the same edge may be inflated multiple times; see the bottom rows with labels $a=2$ and~$a=3$ in Figure~\ref{fig:zigzag}.
Specifically, if some value~$j_b$, $b\in\{1,\ldots,a\}$, appears $c$ times in the list $j_1,\ldots,j_a$, then the edge $(p,q_{j_b})$ is inflated to $c$ many triangles.
Furthermore, observe that $R^{(j_1,\ldots,j_a)}$ differs from~$R^{(j_1',\ldots,j_a')}$ in a flip if and only if $(j_1,\ldots,j_a)$ and $(j_1',\ldots,j_a')$ differ in a single entry by~$\pm 1$, i.e., the subgraph of~$\cF_\alpha'$ induced by the $\alpha$-angulations $R^{(j_1,\ldots,j_a)}$, $(j_1,\ldots,j_a)\in J(R)$, is isomorphic to the graph~$G(a,d)$ defined in Section~\ref{sec:graphs}.
By Lemma~\ref{lem:Gad}, it admits a spanning tree~$\cS(R)$ with $\Delta(\cS(R))\leq 3$, in which the nodes $R^{\jcheck(R)}$ and $R^{\jhat(R)}$ have degree~1.
If $a=1$, then this subgraph and spanning tree is simply a path, and we refer to it as children sequence $c(R):=(R^1,R^2,\ldots,R^d)$.
Also note that if $(R,Q)$ is an edge in~$\cF_\beta'$, then $(R^{\jcheck(R)},Q^{\jcheck(Q)})$ and~$(R^{\jhat(R)},Q^{\jhat(Q)})$ are both edges in~$\cF_\alpha'$.

\begin{figure}[t!]
\includegraphics[page=6]{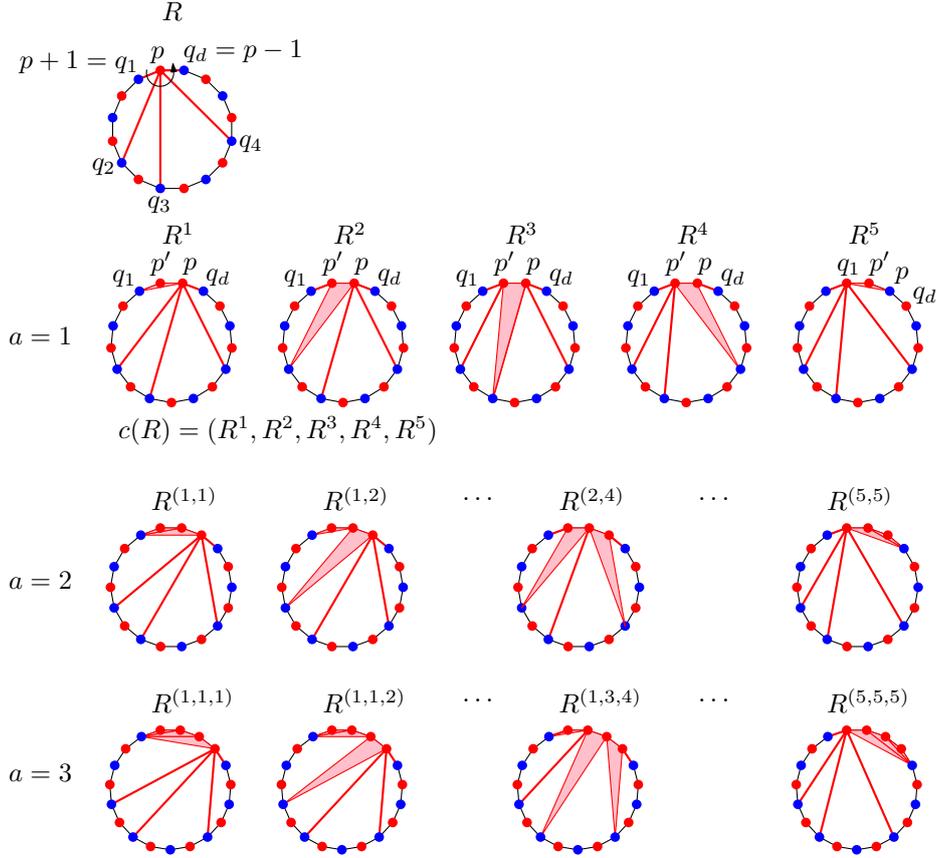}
\caption{Illustration of the proof of Lemma~\ref{lem:zigzag}.
Edges of the $\beta$-angulation~$R$ that are not incident to the point~$p$ are not shown for clarity.}
\label{fig:zigzag}
\end{figure}

\begin{figure}[h!]
\includegraphics[page=7]{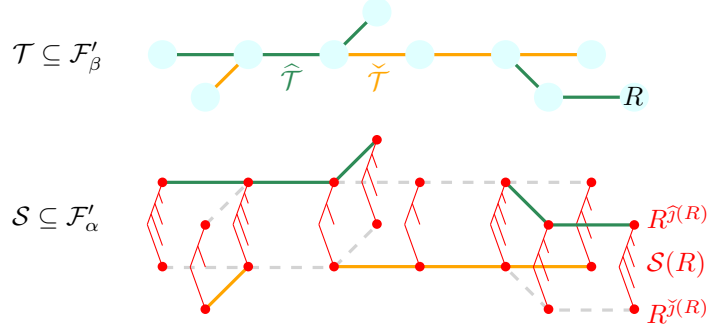}
\caption{Illustration of the proof of part~(ii) of Lemma~\ref{lem:zigzag}.
The fork-like structures are the spanning trees obtained from Lemma~\ref{lem:Gad} (cf.~Figure~\ref{fig:Gad}).}
\label{fig:zigzag2}
\end{figure}
\end{proof}

We now prove~(i), using the assumption $\alpha_i=2$, i.e., $a=1$.
Let $P=(R_1,\ldots,R_L)$ be a Hamilton path in~$\cF_\beta'$.
Then a Hamilton path in~$\cF_\alpha'$ is given by $P':=(\rev(c(R_1)),c(R_2),\rev(c(R_3)),c(R_4),\ldots)$; see Figures~\ref{fig:F4567} and~\ref{fig:F89}.

For proving~(ii), let $\cT$ be a spanning tree in~$\cF_\beta'$ with $\Delta(\cT)\leq 3$.
We partition its edges into two disjoint forests of paths~$\widecheck{\cT}$ and~$\widehat{\cT}$, i.e., we have $\Delta(\widecheck{\cT})\leq 2$ and $\Delta(\widehat{\cT})\leq 2$; see Figure~\ref{fig:zigzag2}.
We then define the spanning tree~$\cS$ as the union of the trees~$\cS(R)$ for all $\beta$-angulations $R$ plus the edges $\{(R^{\jcheck(R)},Q^{\jcheck(Q)})\mid (R,Q)\in\widecheck{\cT}\}$ and $\{(R^{\jhat(R)},Q^{\jhat(Q)})\mid (R,Q)\in\widehat{\cT}\}$.
It is easy to check that~$\cS$ is indeed a spanning tree of~$\cF_\alpha'$ with~$\Delta(\cS)\leq 3$.

\subsection{Proofs of Theorems~\ref{thm:FN-ham} and~\ref{thm:Falpha-ham}}

\begin{proof}[Proof of Theorem~\ref{thm:Falpha-ham}]
For the given coloring sequence~$\alpha=(\alpha_1,\ldots,\alpha_\ell)$ of length~$\ell\geq 10$, we consider the alternating coloring sequence~$\beta=1^\ell$ of length~$\ell$, i.e., all repetitions of colors are reduced to a single occurrence, and $\beta$ corresponds to coloring~$\ell$ points alternatingly red and blue.
The corresponding reduced flip graph~$\cF_\ell'$ is isomorphic to the rotation graph of ternary trees, i.e., we have $\cF_\beta'=\cF_\ell'\simeq \cG_{\ell,4}$.
Theorem~\ref{thm:GNk-ham} yields a Hamilton path in the graph~$\cF_\beta'=\cF_\ell'$.
Applying Lemma~\ref{lem:zigzag}~(ii) once for each~$\alpha_i$ with $\alpha_i>1$, we obtain that~$\cF_\alpha'$ has a spanning tree~$\cS$ with~$\Delta(\cS)\leq 3$.
Lastly, applying Lemma~\ref{lem:FN-ham-alpha}~(ii) yields that~$\cF_\alpha$ has a Hamilton cycle.
\end{proof}

\begin{proof}[Proof of Theorem~\ref{thm:FN-ham}]
For $N\geq 10$ the result is a special case of Theorem~\ref{thm:Falpha-ham}, so it remains to cover the cases~$N=8$ and~$N=9$.
A Hamilton path~$P$ in~$\cF_8'$ is guaranteed by Theorem~\ref{thm:GNk-ham}; see Figure~\ref{fig:F89}.
Applying Lemma~\ref{lem:FN-ham-even}~(i) to~$P$ yields that~$\cF_8$ has a Hamilton cycle.
Applying Lemma~\ref{lem:zigzag}~(i) to~$P$ proves that~$\cF_9'$ has a Hamilton path~$P'$.
Applying Lemma~\ref{lem:FN-ham-even}~(i) to~$P'$ shows that~$\cF_9$ has a Hamilton cycle.
\end{proof}

\section{Proof of Theorem~\ref{thm:a11111}}
\label{sec:a11111}

\begin{figure}[h!]
\includegraphics[page=10]{fn}
\caption{Top: Illustration of the family of graphs~$\cF_{(a,1,1,1,1,1)}$, with the subgraphs~$A$ and~$B$ and the two partition classes of~$B$ highlighted.
Bottom: The reduced graph~$\cF_{(a,1,1,1,1,1)}'$ for $a=4$, explaining the general structure.}
\label{fig:a11111}
\end{figure}

\begin{figure}[h!]
\includegraphics[page=13]{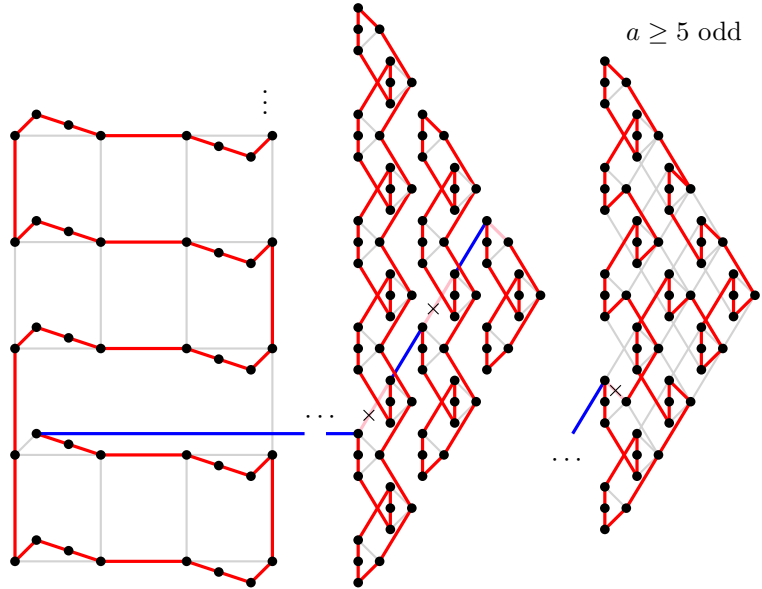}
\caption{Construction of a Hamilton path in~$\cF_{(a,1,1,1,1,1)}$ for $a\geq 4$.}
\label{fig:a11111-path}
\end{figure}

\begin{proof}[Proof of Theorem~\ref{thm:a11111}]
We first argue that~$\cF_\alpha$, $a\geq 1$, has no Hamilton cycle.
For this observe that the graph has two subgraphs~$A$ and~$B$, as shown in Figure~\ref{fig:a11111}.
Specifically, the subgraph~$A$ is induced by all triangulations~$T\in\cC_\alpha$ in which the point~$a+3$ has degree~2 in the dissection~$r(T)$ (i.e., it only has the previous point~$a+2$ and the next point~$a+4$ as neighbors).
The subgraph~$B$ is induced by all other triangulations from~$\cC_\alpha$, and the structure and interaction of both subgraphs can be deduced from the bottom part of the figure.  
The subgraph~$B$ is bipartite with two partition classes of equal sizes, and it is connected to~$A$ only via edges starting from the same partition class.
Consequently, if there were a Hamilton cycle in~$\cF_\alpha$, then removing the edges between~$A$ and~$B$ from the cycle would leave a set of spanning paths in~$B$ that all start and end in the same partition class of~$B$, a contradiction, as the partition classes of~$B$ have the same size.

For $a=1$ it is easy to see that there is no Hamilton path; see the top left part of Figure~\ref{fig:a11111} or Figure~\ref{fig:F4567} (the graph~$\cF_6$).
For $a=2$ a Hamilton path is shown in Figure~\ref{fig:F4567} (in the graph~$\cF_7$).
For $a=3$ a computer search shows that there is no Hamilton path.
For $a\geq 4$, a Hamilton path can be constructed by combining the gadgets shown in Figure~\ref{fig:a11111-path}.
\end{proof}

\section{A unified proof of Theorem~\ref{thm:GNk-ham}}
\label{sec:unified}

As a building block for our algorithmic constructions, in this section we describe a proof of Theorem~\ref{thm:GNk-ham} that is simpler than the original proof by Huemer, Hurtado, and Pfeifle~\cite{MR2474724}, and that unifies it with the Hamiltonicity proof of the associahedron given by Hurtado and Noy~\cite{MR1723053} (the case $k=2$).
Also, the new construction is an application of the `zigzag' principle developed in~\cite{MR4391718}.

\subsection{Tree rotation}

We use the following compact notation for $k$-ary trees, defined inductively as follows:
The unique empty tree is denoted by~$\varepsilon$ (it has 0 vertices).
If $T_1,\ldots,T_k$ are $k$-ary trees, then $T:=[T_1,\ldots,T_k]$ represents the $k$-ary tree that has a root vertex with $T_1,\ldots,T_k$ as its subtrees.
Furthermore, the \defi{subtrees} of~$T$ are given by $T$ itself and by the subtrees of~$T_i$ for $i=1,\ldots,k$.
If $T_i=\cdots=T_j=\varepsilon$, then we write $T=[T_1,\ldots,T_k]$ as $[T_1,\ldots,T_{i-1},\varepsilon^{j-i+1},T_{j+1},\ldots,T_k]$.
In particular, $[\varepsilon^k]$ is the unique 1-vertex $k$-ary tree.

Using this notation, tree rotation becomes the following operation; see Figures~\ref{fig:flip2} and~\ref{fig:flip3}.
Given trees $T_1,\ldots,T_{2k-1}$, we define
\begin{equation}
\label{eq:Ri}
R^i:=[T_1,\ldots,T_{i-1},[T_i,\ldots,T_{i+k-1}],T_{i+k},\ldots,T_{2k-1}]
\end{equation}
for $i=1,\ldots,k$.
We write $R\sim R'$ for two trees that differ only in subtrees~$R^i$ and~$R^j$ defined before for integers~$1\leq i<j\leq k$, and we say that $R$ and~$R'$ differ in a \defi{rotation}.
Note that for $k=2$ we have $R\sim R'$ if $R$ and~$R'$ differ only in subtrees~$R^1=[[A,B],C]$ and~$R^2=[A,[B,C]]$, respectively.
Similarly, for $k=3$ we have $R\sim R'\sim R''$ if $R$, $R'$ and~$R''$ differ only in subtrees~$R^1=[[A,B,C],D,E]$, $R^2=[A,[B,C,D],E]$, and $R^3=[A,B,[C,D,E]]$, respectively.

\subsection{Tree rotation with vertex labels}

For $k=2$ we are in the special case of binary trees.
The $n$ vertices of a binary tree~$T$ can be labeled uniquely by $1,\ldots,n$ using the search tree property:
I.e., if a vertex receives label~$a$, then all left descendants have labels strictly smaller than~$a$, and all right descendants have labels strictly larger than~$a$.
Specifically, for $T=[A,B]$ we give the labels $1,\ldots,v(A)$ to $A$, the label $v(A)+1$ to the root of~$T$, and the labels $v(A)+2,\ldots,v(A)+v(B)+1=n$ to~$B$; see Figure~\ref{fig:rot-vertex}~(a).
Then under the rotation operation $[[A,B],C]\sim [A,[B,C]]$ the subtrees $A,B,C$ are assigned the same sets of labels, and their parent vertices are assigned the labels~$a$ and $b$, respectively, where $a:=v(A)+1$ and $b:=v(A)+v(B)+2$.
We can thus think of a tree rotation as a rotation of the edge~$(a,b)$ in the vertex-labeled tree, in which the subtree~$B$ changes its parent.
Specifically, we refer to $[[A,B],C]\rightarrow [A,[B,C]]$ as \defi{down-rotation}, and to the inverse operation as \defi{up-rotation} of the larger vertex~$b$.

\begin{figure}[h!]
\includegraphics[page=2]{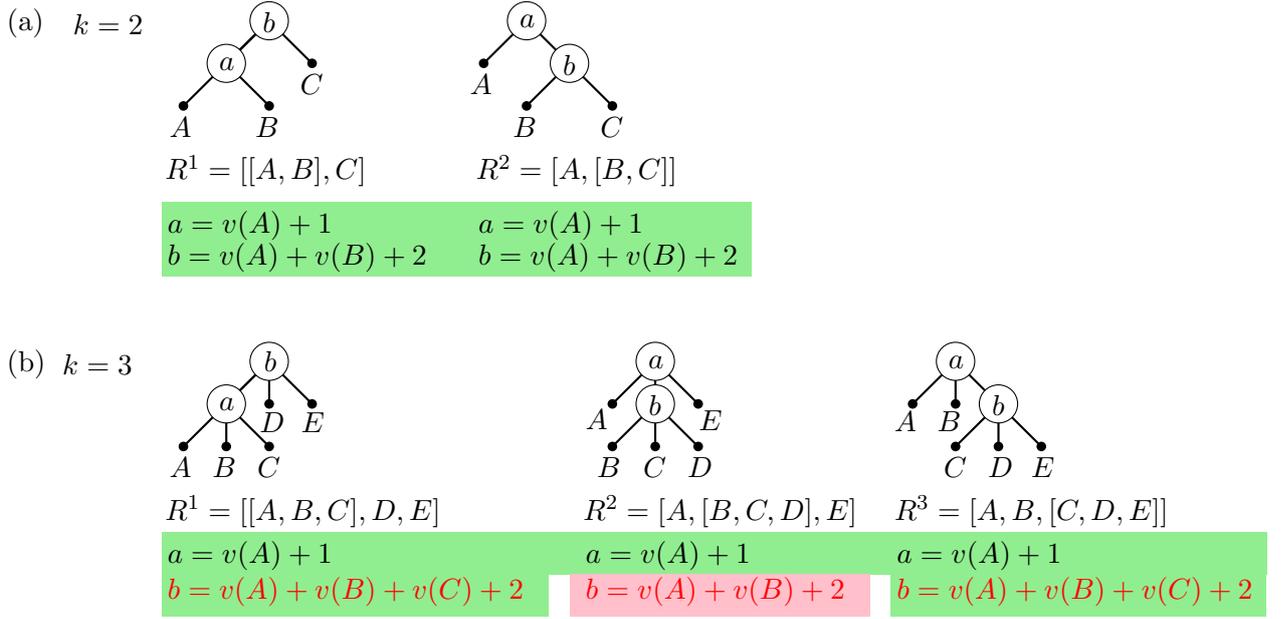}
\caption{Tree rotation of vertex-labeled trees.}
\label{fig:rot-vertex}
\end{figure}

Unfortunately, and maybe surprisingly, for $k\geq 3$ there is {\bf no} such general interpretation of $k$-ary tree rotation that would allow a consistent labeling of the vertices of the trees; see Figure~\ref{fig:rot-vertex}~(b).
Specifically, for $T=[T_1,\ldots,T_k]$ we may generalize the labeling from before as follows:
We give the labels $1,\ldots,v(T_1)$ to~$T_1$, the label $v(T_1)+1$ to the root of~$T$, and for $i=2,\ldots,k$, we give the labels $s_{i-1}+1,\ldots,s_i$ to~$T_i$, where for convenience we define $s_i:=\sum_{j=1}^i v(T_j)+1$.

Then for a tree~$R^i$ as in~\eqref{eq:Ri} the root receives the label~$s_k+1$ if $i=1$ and $s_1$ if~$i>1$.
Furthermore, the descendant of the root not belonging to any of the subtrees $T_1,\ldots,T_{2k-1}$ receives the label $s_1$ if~$i=1$ and $s_i+1$ if~$i>1$.
Note that $s_k+1=s_i+1$ if and only if $s_k=s_i$ if and only if $v(T_k)=v(T_{k-1})=\cdots=v(T_{i+1})=0$, i.e., the rightmost $k-i$ subtrees of $[T_1,\ldots,T_k]$ (this is the leftmost subtree of the root of~$R^1$) are all empty.
With this additional assumption, the aforementioned vertex labeling is preserved between pairs of trees from the set~$\{R^1,R^i,R^{i+1},\ldots,R^k\}$ under rotations of the edge~$(a,b)$, where $a:=s_1$ and $b:=s_k+1=s_i+1$.
We call such rotations \defi{clean}.
We refer to $R^1\rightarrow R^s$, $i\leq s\leq k$, as \defi{down-rotations} of the larger vertex~$b$ \defi{by $k-s+1$ steps}, and to the inverse operations as \defi{up-rotations by $k-s+1$ steps}.
Furthermore, we refer to $R^t\rightarrow R^s$ with $i\leq s<t\leq k$ as \defi{down-rotation} of~$b$ \defi{by $t-s$ steps}, and to the inverse operation as \defi{up-rotation by $t-s$ steps}.

\subsection{The children sequence of a tree}
\label{sec:cseq}

Given a tree~$T$ with $n-1$ vertices, in the following we define a sequence~$c(T)$ of trees with $n$ vertices, which are obtained by inserting an additional vertex into~$T$ at diffent positions of the rightmost branch starting from the root.
This is done in such a way that any two consecutive trees in the sequence~$c(T)$ differ in a tree rotation.
It turns out that all these rotations are clean, and the sequence~$c(T)$ is created by down-rotations of a vertex with the largest label~$n$, and the reverse sequence is created by up-rotations of the vertex~$n$; see Figure~\ref{fig:insertion} below.

The following definitions are illustrated in Figure~\ref{fig:sseq}.
Every nonempty subtree~$S$ of~$T=[T_1,\ldots,T_k]$ is associated with a \defi{subtree sequence}~$\nu_T(S)$, which is a finite sequence of integers from~$\{1,\ldots,k\}$ defined as $\nu_T(S)=\varepsilon$ if $T=S$ and $\nu_T(S)=(i,\nu_{T_i}(S))$ if $S$ is a subtree of~$T_i$.
The sequence~$\nu_T(S)$ describes the path from the root of~$T$ to the subtree~$S$, i.e., it gives the indices of children to which to descend in order to reach~$S$.
Given a subtree~$S$ of~$T$ and the sequence $\nu:=\nu_T(S)$, then we define $T_\nu:=S$.
In particular, $T_\varepsilon=T$, and $T_i$ is the $i$th subtree of the root of~$T$.

The \defi{rightmost branch} in~$T$, denoted $\rho(T)$, is the lexicographically largest subtree sequence~$\nu_T(S)$  with entries from $\{2,\ldots,k\}$ (i.e., 1 is excluded) among all subtrees~$S$ of~$T$.
It corresponds to the branch in~$T$ starting at the root and descending towards the rightmost nonempty child, unless it is the first child.

\begin{figure}[h!]
\makebox[0cm]{ 
\includegraphics[page=3]{tree}
}
\caption{Illustration of subtree sequences and the rightmost branch of a tree ($k=3$).
The sequence of all children created from~$T$ is displayed in Figure~\ref{fig:insertion}.}
\label{fig:sseq}
\end{figure}

Given the sequence $\rho(T)=(\rho_1,\ldots,\rho_\ell)$, we define a sequence of subtrees of~$T$ along its rightmost branch by $T^i:=T_{(\rho_1,\ldots,\rho_i)}$ for $i=0,\ldots,\ell$.
Furthermore, we define $T^{\ell+1}:=T_{\rho(T),1}$, which is the leftmost subtree of~$T_{\rho(T)}$.
For $i=0,\ldots,\ell$ we have $T^i=[S^{i+1},T^{i+1},\varepsilon^{\lambda_i}]$ for integers $\lambda_i$ and sequences of subtree~$S^{i+1}$ that are determined by~$T$.
Specifically, $T^{i+1}$ is the rightmost nonempty subtree of~$T^i$, $\lambda_i$ is the number of empty subtrees to the right of it, and $S^{i+1}$ is the sequence of subtrees to the left of it.
Note that $S^1,\ldots,S^\ell$ are nonempty sequences of subtrees, whereas $S^{\ell+1}=\varepsilon$, i.e., we have $0\leq \lambda_i<k-1$ if $i<\ell$ and $\lambda_\ell=k-1$.

We define $T^i_0:=[T^i,\varepsilon^{k-1}]$ for all $i=0,\ldots,\ell$, i.e., $T^i_0$ is obtained from~$T^i$ by adding a new root vertex and making~$T^i$ its leftmost child.
Furthermore, for $j=1,\ldots,\lambda_i$ we let $T^i_j$ be the tree obtained from~$T^i=[S^{i+1},T^{i+1},\varepsilon^{\lambda_i}]$ by replacing the $j$th occurrence of~$\varepsilon$ after~$T^{i+1}$ by $[\varepsilon^k]$.
In words, $T^i_j$ is obtained from~$T^i$ by inserting a new childless vertex $j$ positions to the right of the subtree~$T^{i+1}$.

For $i=0,\ldots,\ell$ and $j=0,\ldots,\lambda_i$ we let $c_{i,j}(T)$ be the tree obtained from~$T$ by replacing the subtree~$T^i$ by~$T^i_j$.
For $j=0$ this corresponds to inserting a new vertex between the subtree~$T^i$ and its parent vertex (as root if~$i=0$) and making~$T^i$ the first child of this new vertex.
For $j>0$ this corresponds to inserting a new vertex as a child of the root of the subtree~$T^i$, $j$ positions to the right of the rightmost nonempty subtree.

\begin{figure}[t!]
\includegraphics[page=4,scale=0.8]{tree}
\caption{Vertex insertion into the tree~$T$ from Figure~\ref{fig:sseq}. The newly inserted vertex is drawn bold.}
\label{fig:insertion}
\end{figure}

From these definitions we obtain
\begin{equation}
\label{eq:ci0i}
(c_{i,0}(T))^i=T^i_0=[T^i,\varepsilon^{k-1}]=[[S^{i+1},T^{i+1},\varepsilon^{\lambda_i}],\varepsilon^{k-1}]
\end{equation}
and
\begin{equation}
\label{eq:ci0im1}
(c_{i,0}(T))^{i-1}=[S^i,T^i_0,\varepsilon^{\lambda_{i-1}}]=[S^i,[T^i,\varepsilon^{k-1}],\varepsilon^{\lambda_{i-1}}];
\end{equation}
see Figure~\ref{fig:insertion}.
Furthermore, for $j=1,\ldots,\lambda_i$ we obtain
\begin{equation}
\label{eq:cij}
(c_{i,j}(T))^i=T^i_j=[S^{i+1},T^{i+1},\varepsilon^{j-1},[\varepsilon^k],\varepsilon^{\lambda_i-j}].
\end{equation}
For $i<\ell$ we obtain from \eqref{eq:ci0im1} that
\begin{equation}
\label{eq:cip10}
(c_{i+1,0}(T))^i=[S^{i+1},T^{i+1}_0,\varepsilon^{\lambda_i}]=[S^{i+1},[T^{i+1},\varepsilon^{k-1}],\varepsilon^{\lambda_i}].
\end{equation}
We define $C_i:=\{c_{i,0}(T),\ldots,c_{i,\lambda_i}(T)\}$.
Note that $|C_i|=\lambda_i+1$, in particular
\begin{equation}
\label{eq:Cell}
|C_\ell|=\lambda_\ell+1=k.
\end{equation}

Combining \eqref{eq:ci0i} and \eqref{eq:cij} shows that any two trees in $C_i$ differ in a tree rotation, and if $i<\ell$ then we see from~\eqref{eq:cip10} that any two trees in~$C_i\cup\{c_{i+1,0}(T)\}$ differ in a tree rotation; see Figure~\ref{fig:cseq}.
Observe that all these rotations are clean, i.e., if $T$ is a tree with vertex labels $1,\ldots,n-1$, then in the tree $c_{i,j}(T)$ the new largest label~$n$ is given to the root of the subtree~$T^i_0$ and to the rightmost leaf of $T^i_j$ for $j=1,\ldots,\lambda_i$.
I.e., we can think of the trees~$c_{i,j}(T)$ as the trees obtained from~$T$ by inserting the vertex~$n$ in all possible ways on the rightmost branch of~$T$ (and the other vertex labels remain unchanged).

\begin{figure}
\includegraphics[page=5]{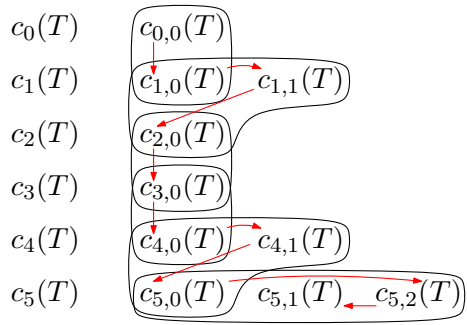}
\caption{Children sequence~$c(T)$ for the tree~$T$ from Figure~\ref{fig:sseq} (see also Figure~\ref{fig:insertion}).
Sets of trees surrounded by a bubble differ in a tree rotation.
The arrows correspond to down-rotations of the largest vertex~$n$ by 1 step.
}
\label{fig:cseq}
\end{figure}

We define the \defi{children sequence} of~$T$ as
\[ c(T):=(c_0(T),c_1(T),\ldots,c_\ell(T)) \]
with
\[ c_i(T):=c_{i,0}(T),c_{i,\lambda_i}(T),c_{i,\lambda_i-1}(T),\ldots,c_{i,1}(T); \]
see Figure~\ref{fig:cseq}.
Note that for every tree rotation along this sequence, at most two nonempty subtrees change parent.

\subsection{Rotations between first and last children}

To construct a Hamilton path in the graph~$\cG_{N,k+1}$, i.e., in the rotation graph of $n$-vertex $k$-ary trees, we will argue by induction on~$n$.
Given a Hamilton path ordering of all trees with $n-1$ vertices, we replace each tree by its children sequence obtained by inserting the new vertex~$n$.
Specifically, the children sequences will be traversed in forward or reverse order alternatingly, in a zigzag fashion; see Figure~\ref{fig:gc3}.
We now aim to understand how rotations are preserved when considering the first and last elements of children sequences $c(T)$ and~$c(T')$ for two trees~$T$ and~$T'$ that differ in a rotation.

For any sequence $x=(x_1,\ldots,x_\ell)$, we let $f(x):=x_1$ be its first element and $\ell(x):=x_\ell$ its last element.
More generally, we define $\ell^i(x):=x_{\ell+1-i}$, i.e., the $i$th-last entry of~$x$.
We now consider two trees $T$ and~$T'$ that differ in a clean tree rotation of an edge~$(a,b)$, with $a<b$, and we compare the first and last entries of the children sequences $c(T)$ and~$c(T')$.
For the rotation of the edge~$(a,b)$, we let $R^1,R^i,R^{i+1},\ldots,R^k$ denote the sequence of all trees obtained by clean rotations of the edge~$(a,b)$, such that $R^1\rightarrow R^k$ is a down-rotation of the vertex~$b$ by 1 step, and $R^k\rightarrow R^{k-1}\rightarrow\cdots\rightarrow R^i$ are all down-rotations of~$b$ by 1~step; see Figure~\ref{fig:child-rot}.
We let $T_1,\ldots,T_{2k-1}$ denote the corresponding subtrees of the edge~$(a,b)$; recall~\eqref{eq:Ri}.

It is easy to see that
\begin{equation}
\label{eq:frot}
f(c(T))\sim f(c(T')),
\end{equation}
i.e., the first trees in the sequences $c(T)$ and~$c(T')$ differ in the same rotation operation.
Furthermore, if one of $T_{k+1},\ldots,T_{2k-1}$ is nonempty then we have
\begin{subequations}
\begin{equation}
\label{eq:lrot1}
\ell(c(T))\sim \ell(c(T')),
\end{equation}
i.e., the last trees in the sequences $c(T)$ and~$c(T')$ differ in the same rotation operation.
However, if $T_{k+1}=\cdots=T_{2k-1}=\varepsilon$, then we have
\begin{equation}
\label{eq:lrot2}
\begin{split}
\ell(R^1) &\sim \ell^{k-s+1}(R^s) \text{ for } s=i,\ldots,k, \\
\ell(R^t) &\sim \ell^{t-s+1}(R^s) \text{ for } i\leq s<t\leq k.
\end{split}
\end{equation}
\end{subequations}

\begin{figure}[h!]
\makebox[0cm]{ 
\includegraphics[page=6]{tree}
}
\caption{Relation between last entries of~$c(T)$ and~$c(T')$ for two trees that differ in a clean tree rotation.
Edges of the same color lead to the same subtrees.
The arrows indicate down-rotations of the vertex~$b$ by 1 step.}
\label{fig:child-rot}
\end{figure}

\subsection{Proof of Theorem~\ref{thm:GNk-ham}}

\begin{figure}
\makebox[0cm]{ 
\includegraphics[page=7]{tree}
}
\caption{Illustration of the proof of Theorem~\ref{thm:GNk-ham}.
For each tree~$T$ with $n=1,2,3$ vertices, the trees in the children sequence $c(T)$ are drawn in the bubble below it, to be read from top to bottom (in forward order).
The red lines indicate tree rotations among first entries of the children sequences corresponding to~\eqref{eq:frot}.
The blue lines indicate tree rotations among last entries of the children sequences, where the horizontal blue lines correspond to~\eqref{eq:lrot1} and the tilted blue lines correspond to~\eqref{eq:lrot2}.
The green lines indicate tree rotations among the last $k$ entries of the children sequence; recall~\eqref{eq:Cell}.
For comparison, the tree structure shown in Figure~5 of~\cite{MR2474724} has 3 groups of children of sizes 3,3,6 on level~$n=3$, whereas our tree has 3 groups of sizes 3,4,5.}
\label{fig:gc3}
\end{figure}

\begin{figure}
\makebox[0cm]{ 
\includegraphics[page=8]{tree}
}
\caption{Illustration of the construction of a Hamilton cycle in the proof of Theorem~\ref{thm:GNk-ham}.
Specifically, the figure shows the interleaving of the children sequences $c(S_{n-1})$ and~$c(S_{n-1}')$ of two special trees~$S_{n-1}$ and~$S_{n-1}'$.
As in Figure~\ref{fig:gc3}, the children sequences are displayed below each tree from top to bottom (corresponding to forward order of the sequence).
Part (a) shows the construction for $k=2$ (binary trees), and part (b) shows the construction for $k\geq 3$, subdivided into the two cases $n\geq 4$ (top right) and $n=3$ (bottom right).}
\label{fig:hc}
\end{figure}

\begin{proof}[Proof of Theorem~\ref{thm:GNk-ham}]
For the reader's convenience, the proof is illustrated in Figures~\ref{fig:gc3} and~\ref{fig:hc}.

To construct a Hamilton cycle in~$\cG_{N,k+1}$, we view it as the rotation graph of all $n$-vertex $k$-ary trees (recall that $N=(k-1)n+2$), and we argue by induction on~$n$.
The induction basis $n=\max\{2,5-k\}$ is trivial.
Indeed, for $k=2$ we have $n=3$ and $N=5$ and the graph $\cG_{5,2}$ is a 5-cycle, and for $k\geq 3$ we have $n=2$ and $N=2k$ and the graph $\cG_{2k,k+1}$ is a complete graph on $k$ vertices.
For the induction step, let $n>\max\{2,5-k\}$.
By induction, we have a cyclic ordering $C=(T_1,\ldots,T_L)$ of the set~$\cT_{n-1,k}$ of all $k$-ary trees with $n-1$ vertices, such that any two consecutive trees (including $T_L$ and $T_1$) differ in a rotation.
We replace in the sequence~$C$ each tree~$T_i$ by its children sequence~$c(T_i)$ if $i$ is even or the reversed children sequence $\rev(c(T_i))$ if $i$ is odd, yielding a sequence of all $k$-ary trees with $n$ vertices.
Within each subsequence~$c(T_i)$ or~$\rev(c(T_i))$, any two consecutive trees differ in a rotation as argued in Section~\ref{sec:cseq}.
Furthermore, on every transition $\rev(c(T_i))\rightarrow c(T_{i+1})$ the first entries of~$c(T_i)$ and~$c(T_{i+1})$ differ in a rotation by~\eqref{eq:frot}.
Lastly, on every transition $c(T_i)\rightarrow \rev(c(T_{i+1}))$, there are two possible cases:
Either the last entries of~$c(T_i)$ and~$c(T_{i+1})$ differ in a rotation by~\eqref{eq:lrot1}, or the last entry of one of the two sequences differs in the $r$th-last entry of the other by~\eqref{eq:lrot2} for some $r<k$, in which case we reverse the last $r$ elements of the other sequence, which is possible since any two among the last $k$ trees of each sequence differ in a rotation by~\eqref{eq:Cell} (i.e., these $k$ trees form a clique in the rotation graph); see Figure~\ref{fig:gc3}.
These modifications yield an ordering of the set~$\cT_{n,k}$ of all $k$-ary trees with $n$ vertices by rotations, i.e., a Hamilton path in the graph~$\cG_{N,k+1}$.
If $L=t_{n-1,k}=|\cT_{n-1,k}|$ is even, then the last transition $c(T_L)\rightarrow \rev(c(T_1))$ can be treated in the same way as before, yielding a Hamilton cycle.
On the other hand, if $L$ is odd, then we correct for this parity issue by interleaving the children sequences~$c(S_{n-1})$ and~$c(S_{n-1}')$ of two particular trees~$S_{n-1},S_{n-1}'\in\cT_{n-1,k}$ as shown in Figure~\ref{fig:hc}, maintaining the invariant that the Hamilton cycle visits~$S_n$ and~$S_n'$ consecutively.
\end{proof}

\section{Proofs of Theorems~\ref{thm:GNk-algo} and~\ref{thm:FN-algo}}
\label{sec:algo}

In the following proofs we sketch the basic data structures and ideas used in implementing our algorithms; for details see our C++ implementation~\cite{cos_kary}.

\begin{proof}[Proof of Theorem~\ref{thm:GNk-algo}]
We represent a $k$-ary tree with vertices $1,\ldots,n$ by maintaining an array of length~$k$ for the children of each vertex.
Furthermore, each vertex has access to its parent vertex.
With these data structures, up- and down-rotations by 1 step can be performed in time~$\cO(k)$.
The algorithm to generate all trees by rotations is a faithful implementation of the inductive zigzag construction described in Section~\ref{sec:unified}.
As we are only computing a Hamilton path in the rotation graph (not a cycle), the interleaving of children sequences for the special trees~$S_n$ and~$S_n'$ discussed in the proof of Theorem~\ref{thm:GNk-ham} is omitted, which simplifies this computation.
Specifically, the selection of the vertex to be rotated next and in which direction can be achieved in time~$\cO(1)$ using simple data structures developed for the Hartung-Hoang-M\"utze-Williams permutation language framework; see e.g.~\cite{MR4598046,DBLP:journals/talg/CardinalMM25}.

The only algorithmic detail that requires careful attention is the reversal of suffixes of the children sequences that occurs on some transitions $c(T)\rightarrow \rev(c(T'))$ between $n$-vertex trees, where $T\rightarrow T'$ is a transition on the Hamilton path in the rotation graph of trees with $n-1$ vertices.
Note that this phenomenon only materializes for $k\geq 3$, as for binary trees ($k=2$) we are always in case~\eqref{eq:lrot1} but never in case~\eqref{eq:lrot2}.
To achieve these suffix reversals, whenever a vertex starts a sequence of down-rotations or up-rotations, we eagerly precompute the entire sequence of rotations necessary to obtain the children sequence, without applying the rotations yet.
Specifically, we encode an up-rotation by $s$ steps by the integer~$+s$, and a down-rotation by $s$ steps by the integer~$-s$.
Consequently, a children sequence~$c(T)$ of length~$r$ corresponds to the \defi{step sequence} $(-1)^{r-1}$, whereas the reverse sequence $\rev(c(T))$ corresponds to the step sequence $(+1)^{r-1}$.
A reversal of a suffix of~$c(T)$ of length~$s$ corresponds to the modification $(-1)^{r-1}\rightarrow (-1)^{r-1-s},-s,(+1)^{s-1}$ of the step sequence.
These precomputations allow to predict the required suffix reversals.
Importantly, suffix reversals also result in rotations by $>1$ steps (whenever entries of the step sequence are different from $\pm 1$).
Observe however that the costs for precomputing the step sequence and the rotations of $>1$ steps are proportional to the length of the children sequence and are consequently amortized to~$\cO(k)$ on average per generated tree.
\end{proof}

\begin{proof}[Proof of Theorem~\ref{thm:FN-algo}]
We first consider the case that the number~$N$ of points is even, and then describe the modifications necessary to also cover the case that $N$ is odd.
We represent a colorful triangulation~$T\in\cC_N$ by a pair~$(r(T),b(T))$, where $r(T)$ is the quadrangulation with $q=(N-2)/2$ quadrangles obtained from~$T$ by removing all monochromatic edges, represented by its dual ternary tree (recall Section~\ref{sec:alternating}), and $b(T)\in\{0,1\}^q$ is a bitstring that describes for each quadrangle of~$r(T)$ whether the monochromatic edge of~$T$ inside of it connects two red points or two blue points (by setting the corresponding bit to~0 or~1, respectively).
Recall that the reduced flip graph~$\cF_N'$ on quadrangulations is isomorphic to the rotation graph of ternary trees, specifically $\cF_N'\simeq \cG_{N,4}$, i.e., we can use our algorithm for computing a Hamilton path (not cycle) in~$\cG_{N,4}$ discussed earlier.
As we are only computing a Hamilton path in~$\cF_N$ (not a cycle), we slightly modify the construction described in the proof of Lemma~\ref{lem:FN-ham-even}~(i), namely we replace each edge of the Hamilton path in~$\cF_N'$ by a single (connector) edge in~$\cF_N$, not by two edges, using the property that the hypercube is \defi{hamilton-laceable}, i.e., it admits a Hamilton path between any two end vertices of opposite parity.
Along the resulting Hamilton path in~$\cF_N$, we alternatingly see one flip of a colorful edge (=tree rotation) followed by $2^q-1$ flips of monochromatic edges (=flipping a bit inside a hypercube).
One can check that in our construction one only encounters parity differences~$1$ or~$3$, and such Hamilton paths in the hypercube can be computed efficiently, i.e., in time~$\cO(1)$ per vertex, by the binary reflected Gray code, or by straightforward modifications of it.
The tree rotations take time~$\cO(k)=\cO(1)$ (as the arity $k=3$ is fixed) on average, so overall the running time is~$\cO(1)$ on average per generated colorful triangulation.

It remains to discuss the case that the number~$N$ of points in the triangulations is odd, i.e., the points~$1$ and~$N$ have the same color (both red), whereas the color alternates between the remaining points.
The corresponding reduced graph~$\cF_N'$ has $q=(N-3)/2$ quadrangles and one triangle~$t$ with the monochromatic edge~$(1,N)$ on the boundary (recall Section~\ref{sec:general} and Figures~\ref{fig:F4567} and~\ref{fig:F89}).
We model this in our data structures by including an artificial point~$N+1$ connected to~1 and~$N$, turning the triangle~$t$ into an artificial quadrangle containing~$t$, subject to the constraint that the monochromatic edge~$(1,N)$ inside this quadrangle must never be flipped.
Hence, we can use the aforementioned data structures (ternary tree plus bitstring), with one extra tree vertex and one extra bit, where the extra tree vertex only ever appears on the rightmost branch (this corresponds to the artificial quadrangle always containing points~$1$ and~$N$), and the extra bit never changes (as the monochromatic edge~$(1,N)$ is never flipped).
Both constraints can be handled by straightforward adjustments.
\end{proof}

\section{Proof of Theorem~\ref{thm:HN-conn}}
\label{sec:3col}

In this section, we prove Theorem~\ref{thm:HN-conn}.
Recall that we consider triangulations for which the points $1,\ldots,N$ are colored red ($\tr$), blue ($\tb$) and green ($\tg$) alternatingly, and where every triangle has points of all three colors.
As discussed in Section~\ref{sec:prelim-3col}, the corresponding binary trees with $N-2$ vertices have the property that every vertex in odd distance from the root has two children.
A twist corresponds to subtree modifications of the form $T=[[[A,B],[C,D]],E]\longleftrightarrow [A,[[B,C],[D,E]]]=T'$.
Specifically, we refer to $T\rightarrow T'$ as a \defi{right-twist} of the subtree~$T$, and to the inverse operation $T\leftarrow T'$ as a \defi{left-twist} of the subtree~$T'$; see Figure~\ref{fig:twist}.
Note that for a right-twist to be applicable to~$T$, the left child of the root of~$T$ must have two children.
Similarly, for a left-twist to~$T'$ to be applicable, the right child of the root of~$T'$ must have two children.

\begin{proof}[Proof of Theorem~\ref{thm:HN-conn}]
Let $T_0$ be the binary tree with $N-2$ vertices for which every vertex at even distance from the root has only a left child, and every vertex at odd distance from the root has two children, where the right child has no children.
We show that in the flip graph~$\cH_N$, every node~$T$ is connected to~$T_0$ by a sequence of twists, applied to the binary tree representations.
This argument is split into two parts.

Let $v\in T$ be a vertex at even distance from the root.
Then there is a sequence of twists $T\rightarrow T'$ that modifies only the subtree rooted at~$v$ such that in~$T'$ the root of this subtree has no right child.
Indeed, if $v$ has a right child~$w$, then $w$ is at odd distance from the root, which implies that $w$ has two children, and so we can apply a left-twist in~$T$ to the subtree rooted at~$v$, which reduces the number of descendants in the right subtree of its root.
Consequently, we can repeat this operation until the root of this subtree has no right child.

Let $v\in T$ be a vertex at odd distance from the root, and let $u$ and~$w$ be its left and right child, respectively.
Then there is a sequence of twists $T\rightarrow T'$ that modifies only the subtree rooted at~$v$ such that in~$T'$ the right child of the root of this subtree has no children; see Figure~\ref{fig:conn}.
Let $x$ be the parent of~$v$.
We only consider the case that~$v$ is the left child of~$x$, as the other case is analogous.
Let $S=[[[A,B],[C,D]],E]$ be the subtree of~$T$ rooted at~$x$, i.e., $[A,B]$ is the subtree rooted at~$u$ and~$[C,D]$ is the subtree rooted at~$w$.
As $w$ has even distance from the root, by our earlier arguments there is a sequence of twists $T\rightarrow T_1$ such that the subtree~$S_1$ rooted at~$x$ in~$T_1$ equals~$S_1=[[[A,B],[C',D']],E]$ with $D'=\varepsilon$, i.e., in~$T_1$ the right child of~$v$ has no right child.
We apply a single right-twist to the subtree~$S_1$ to obtain a tree~$T_2$ in which the corresponding subtree~$S_2$ equals $S_2=[A,[[B,C'],[D',E]]]$ (this temporarily modifies a subtree outside of the original subtree of~$T$ rooted at~$v$, but this is change is undone subsequently).
The subtree~$[B,C']$ of~$T_2$ has its root~$v$ at even distance from the root, so again there is a sequence of twists~$T_2\rightarrow T_3$ such that the corresponding subtree~$S_3$ in~$T_3$ becomes~$S_3=[A,[[B',C''],[D',E]]]$ with~$C''=\varepsilon$.
Applying a left-twist to the subtree~$S_3$ yields a tree~$T'$ in which the corresponding subtree (rooted at~$x$) is now~$S'=[[[A,B'],[C'',D']],E]$ with $C''=D'=\varepsilon$, as desired.

\begin{figure}
\makebox[0cm]{ 
\includegraphics[page=2]{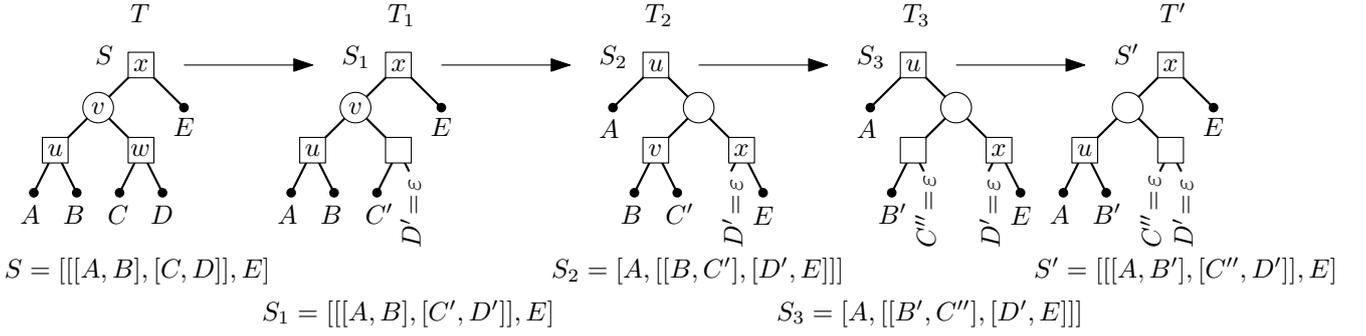}
}
\caption{Illustration of the proof of Theorem~\ref{thm:HN-conn}.
Vertices in even and odd distance from the root are drawn as squares and circles, respectively.}
\label{fig:conn}
\end{figure}

Clearly, from any tree~$T$ we can alternatingly apply the sequences of operations discussed before, starting at the root and then descending towards the left subtree so that~$T$ is successively transformed into~$T_0$.
This completes the proof.
\end{proof}

\section{Open questions}
\label{sec:open}

Tables~\ref{tab:hc} and~\ref{tab:hp} show all coloring sequences~$\alpha$ on up to $N\leq 11$ points for which the graph~$\cF_\alpha$ has no Hamilton path or cycle.
The sequences are shown up to rotational symmetry, reversal, and exchange of the two colors.
In several cases, Theorems~\ref{thm:comb}, \ref{thm:grid} and~\ref{thm:a11111} provide an explanation for the non-Hamiltonicity; see the third column in the tables.
In the other cases, we are still missing such an explanation.

\begin{table}
\begin{minipage}[t]{.5\linewidth}
\centering
\caption{Coloring sequences~$\alpha$ for $N\leq 11$ for which~$\cF_\alpha$ has no Hamilton cycle but a Hamilton path.}
\label{tab:hc}
\begin{tabular}{lll}
$N$ & $\alpha$ & Reason \\\hline
6 & $(4,2)$ & Thm.~\ref{thm:comb} \\
  & $(2,1,1,2)$ & \\\hline
7 & $(5,2)$ & Thm.~\ref{thm:comb} \\
  & $(2,1,1,1,1,1)$ & Thm.~\ref{thm:a11111} \\\hline
8 & $(6,2)$ & Thm.~\ref{thm:comb} \\
  & $(4,4)$ & Thm.~\ref{thm:comb} \\\hline
9 & $(7,2)$ & Thm.~\ref{thm:comb} \\
  & $(4,1,1,1,1,1)$ & Thm.~\ref{thm:a11111} \\\hline
10& $(8,2)$ & Thm.~\ref{thm:comb} \\
  & $(6,4)$ & Thm.~\ref{thm:comb} \\
  & $(5,1,2,2)$ & \\
  & $(5,1,1,1,1,1)$ & Thm.~\ref{thm:a11111} \\
  & $(4,1,3,2)$ & \\
  & $(4,1,2,3)$ & \\\hline
11& $(9,2)$ & Thm.~\ref{thm:comb} \\
  & $(6,1,1,1,1,1)$ & Thm.~\ref{thm:a11111} \\
  & $(5,1,2,3)$ & \\
  & $(5,1,1,4)$ & \\
\end{tabular}
\end{minipage}%
\begin{minipage}[t]{.5\linewidth}
\centering
\caption{Coloring sequences~$\alpha$ for $N\leq 11$ for which~$\cF_\alpha$ has no Hamilton path.}
\label{tab:hp}
\begin{tabular}{lll}
$N$ & $\alpha$ & Reason \\\hline
6 & $(3,1,1,1)$ & Thm.~\ref{thm:grid} \\
  & $(3,3)$ & Thm.~\ref{thm:comb} \\
  & $(1,1,1,1,1,1)$ & Thm.~\ref{thm:a11111} \\ \hline
7 & $(4,3)$ & Thm.~\ref{thm:comb} \\
  & $(3,1,2,1)$ & Thm.~\ref{thm:grid} \\
  & $(3,1,1,2)$ & \\\hline
8 & $(5,1,1,1)$ & Thm.~\ref{thm:grid} \\
  & $(5,3)$ & Thm.~\ref{thm:comb} \\
  & $(4,1,2,1)$ & Thm.~\ref{thm:grid} \\
  & $(4,1,1,2)$ & \\
  & $(3,1,3,1)$ & Thm.~\ref{thm:grid} \\
  & $(3,1,1,1,1,1)$ & Thm.~\ref{thm:a11111} \\
  & $(3,1,1,3)$ & \\
  & $(3,2,1,2)$ & \\\hline
9 & $(6,1,1,1)$ & Thm.~\ref{thm:grid} \\
  & $(6,3)$ & Thm.~\ref{thm:comb} \\
  & $(5,1,2,1)$ & Thm.~\ref{thm:grid} \\
  & $(5,1,1,2)$ & \\
  & $(5,4)$ & Thm.~\ref{thm:comb} \\
  & $(4,1,3,1)$ & Thm.~\ref{thm:grid} \\
  & $(4,1,1,3)$ & \\
  & $(3,1,3,2)$ & \\
  & $(3,1,2,3)$ & \\\hline
10& $(7,1,1,1)$ & Thm.~\ref{thm:grid} \\
  & $(7,3)$ & Thm.~\ref{thm:comb} \\
  & $(6,1,2,1)$ & Thm.~\ref{thm:grid} \\
  & $(6,1,1,2)$ & \\
  & $(5,1,3,1)$ & Thm.~\ref{thm:grid} \\
  & $(5,1,1,3)$ & \\
  & $(5,5)$ & Thm.~\ref{thm:comb} \\
  & $(4,1,4,1)$ & Thm.~\ref{thm:grid} \\
  & $(4,1,1,4)$ & \\
  & $(3,1,3,3)$ & \\\hline
11& $(8,1,1,1)$ & Thm.~\ref{thm:grid} \\
  & $(8,3)$ & Thm.~\ref{thm:comb} \\
  & $(7,1,2,1)$ & Thm.~\ref{thm:grid} \\
  & $(7,1,1,2)$ & \\
  & $(7,4)$ & Thm.~\ref{thm:comb} \\
  & $(6,1,3,1)$ & Thm.~\ref{thm:grid} \\
  & $(6,1,1,3)$ & \\
  & $(6,5)$ & Thm.~\ref{thm:comb} \\
  & $(5,1,4,1)$ & Thm.~\ref{thm:grid} \\
  & $(5,1,3,2)$ & \\
  & $(4,1,3,3)$ & \\
\end{tabular}
\end{minipage}
\end{table}

Based on this data, we feel that Theorem~\ref{thm:Falpha-ham} can be strengthened and the requirement $\ell\geq 10$ relaxed to $\ell\geq 8$.
Furthermore, it seems that for $\ell=6$ there is always a Hamilton path in~$\cF_\alpha$ unless $\alpha\in\{(1,1,1,1,1,1),(3,1,1,1,1,1)\}$.

Also, Theorem~\ref{thm:HN-conn} on twists in 3-colored triangulations invites deeper investigation.
We conjecture that the graph~$\cH_N$ has a Hamilton cycle for all $N\geq 9$ that are divisible by~3.
Furthermore, it seems that if $N=2\pmod{3}$ the graph~$\cH_N$ is not connected.
What are the properties of the flip graphs resulting from general coloring patterns with three or more colors?

Another interesting question concerns bijections between $k$-ary trees and classes of permutations.
For $k=2$ (binary trees), there is a natural bijection to 231-avoiding permutations.
Are there similar correspondences between $k$-ary trees and pattern-avoiding permutations for $k\geq 3$?
In particular, do tree rotations translate to nice operations on the permutations, specifically to so-called jumps heavily used in~\cite{MR4391718,MR4344032,MR4598046,DBLP:journals/talg/CardinalMM25}?

Going back to the uncolored setting and the associahedron~$\cG_N$, Theorem~\ref{thm:Falpha-ham} shows that~$\cG_N$ admits cycles of many different lengths.
What is the \defi{cycle spectrum} of~$\cG_N$, i.e., the set~$S(\cG_N)$ of all possible lengths of cycles in~$\cG_N$? We conjecture that almost all lengths are possible.

\begin{conjecture}
We have $|S(\cG_N)|/|\cD_{N,3}|=1-o(1)$ as $N\rightarrow\infty$.
\end{conjecture}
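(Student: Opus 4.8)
The plan is to prove the formally stronger statement that $\cG_N$ contains cycles of all but $o(|\cD_{N,3}|)$ of the lengths in $\{3,4,\dots,|\cD_{N,3}|\}$, by strong induction on~$N$, exploiting the recursive product structure of the associahedron. Fix the boundary edge $\{1,N\}$; in any triangulation it lies in a unique \emph{ear triangle} $\{1,N,j\}$ with $2\le j\le N-1$, and fixing~$j$ splits the polygon into a $j$-gon on $\{1,\dots,j\}$ and an $(N{+}1{-}j)$-gon on $\{j,\dots,N\}$, flips inside the two parts being independent. Hence the triangulations with a given apex~$j$ induce a subgraph $P_j$ of~$\cG_N$ isomorphic to the Cartesian product $\cG_j\,\square\,\cG_{N+1-j}$; the sets $P_2,\dots,P_{N-1}$ partition $V(\cG_N)$, with $|P_j|=|\cD_{j,3}|\cdot|\cD_{N+1-j,3}|$ and $\sum_j|P_j|=|\cD_{N,3}|$ by the Catalan convolution. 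Moreover, flipping either diagonal of an ear triangle moves its apex, which shows that between any two pieces $P_j,P_{j'}$ there are many edges of~$\cG_N$, and this is what will let us splice cycles living in different pieces. (Already Theorem~\ref{thm:Falpha-ham} produces cycles of $2^{O(N)}$ different lengths inside~$\cG_N$, of this product-facet type; but since $|\cD_{N,3}|$ grows like~$4^N$ we need a far richer source of cycles, supplied by all the pieces $P_j$ together.)

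The workhorse is a lemma on cycle spectra of Cartesian products: if $G$ has a Hamilton cycle, $H$ has a Hamilton path, and $G$ contains a $5$-cycle (which holds for $\cG_m$, $m\ge5$, because the associahedron has pentagonal $2$-faces), then $G\,\square\,H$ has cycles of every length in $[\,4,\,v(G)\,v(H)\,]$ with at most $\bigl(\text{number of lengths missed by }G\text{ in }[\,4,v(G)\,]\bigr)+O(v(H))$ exceptions. I would prove this by the classical ``snake''/``comb'' construction: traversing the Hamilton cycle of~$G$ level by level along a Hamilton path of~$H$ gives a Hamilton cycle of $G\,\square\,L_k$ for every $k\le v(H)$ ($L_k$ the path on~$k$ vertices), hence every multiple of $v(G)$ up to $v(G)v(H)$; intermediate lengths come from detours of a partial last level, where $2$-vertex detours along a base edge and $4$-vertex detours around a pentagon of~$G$ realize all but a bounded number of the lengths in each band of width $v(G)$, while the bottom band $[\,4,v(G)\,]$ inherits the cycle spectrum of~$G$. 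Applying this to each piece $P_p=\cG_p\,\square\,\cG_{N+1-p}$ with $G=\cG_{M(p)}$, $M(p):=\max(p,N{+}1{-}p)$, and $H=\cG_{m(p)}$, $m(p):=\min(p,N{+}1{-}p)$, the number of lengths missed inside~$P_p$ is at most $\delta_{M(p)}+O(|\cD_{m(p),3}|)$, where $\delta_M$ denotes the number of lengths missed by~$\cG_M$.

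To realize a prescribed length~$\ell$, order the pieces so that $P_2\simeq\cG_{N-1}$ comes first, let $\sigma_0=0<\sigma_1<\dots<\sigma_{N-2}=|\cD_{N,3}|$ be the partial sums of the $|P_{\pi(i)}|$, and for $\ell\in(\sigma_t,\sigma_{t+1}]$ write $\ell=\sigma_t+r$ and build the cycle by splicing Hamilton cycles of $P_{\pi(1)},\dots,P_{\pi(t)}$ together with a cycle of length~$r$ inside the frontier piece $P_{\pi(t+1)}$ (available unless~$r$ is one of its $o(|P_{\pi(t+1)}|)$ missed lengths; finitely many tiny~$r$ are handled by shifting to one fewer full piece). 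Each piece is used as a frontier piece exactly once, so the total number of missed lengths is at most $\sum_{p}\bigl(\delta_{M(p)}+O(|\cD_{m(p),3}|)\bigr)$. Here $m(p)$ and $M(p)$ each take every value at most twice; $\sum_{m}|\cD_{m,3}|=o(|\cD_{N,3}|)$ kills the second term, and since consecutive Catalan numbers have ratio tending to~$1/4$ one gets $\sum_{M\ge N/2}|\cD_{M,3}|\le\tfrac13|\cD_{N,3}|(1+o(1))$, so that $2\sum_{M}\delta_{M(p)}\le\tfrac23|\cD_{N,3}|(1+o(1))\cdot\max_{N/2\le M<N}\bigl(\delta_M/|\cD_{M,3}|\bigr)$. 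Dividing by $|\cD_{N,3}|$ yields a recursion of the form $\delta_N/|\cD_{N,3}|\le\tfrac23\max_{N/2\le M<N}\bigl(\delta_M/|\cD_{M,3}|\bigr)+o(1)$, whose contraction factor $\tfrac23<1$ forces $\delta_N/|\cD_{N,3}|\to0$, which is the conjecture.

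The main obstacle is the splicing step: merging two vertex-disjoint cycles $Z\subseteq P_i$ and $Z'\subseteq P_j$ into a single cycle of length $|Z|+|Z'|$ needs a ``ladder'', i.e.\ an edge $xy$ of~$Z$ and an edge $x'y'$ of~$Z'$ with $xx',yy'\in E(\cG_N)$, and one must choose the Hamilton cycles of the pieces (and, at the frontier, the cycle realizing~$r$) so that such ladders exist \emph{simultaneously} across all interfaces used; this requires a structural understanding of the bipartite graphs of edges between ear-pieces and of how a Hamilton cycle of a piece can be routed to meet them. A secondary difficulty is making the contraction constant provably below~$1$ (running the induction over the overlapping family of product facets $\cG_a\,\square\,\cG_b$ with $a+b=N+2$ coming from short diagonals may give cleaner constants and, because adjacent facets share vertices, also eases splicing) and verifying the uniform $O(v(H))$ error bound in the product lemma (in particular that the Hamilton cycle of~$G$ can be chosen to meet a pentagon so that odd-parity detours are available). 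The remaining ingredients — the Catalan asymptotics, finitely many base cases, and the known Hamilton cycle of~$\cG_N$ supplying the top length $|\cD_{N,3}|$ exactly — are routine.
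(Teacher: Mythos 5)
This statement is posed as a conjecture in the paper's concluding section; the paper contains no proof of it, so there is nothing to compare your argument against — it must stand on its own, and as written it does not yet. The overall architecture is sensible and the bookkeeping is correct: the decomposition of $V(\cG_N)$ by the apex of the ear triangle at $\{1,N\}$ into pieces $P_j\simeq\cG_j\,\square\,\cG_{N+1-j}$, the Catalan convolution $\sum_j|P_j|=|\cD_{N,3}|$, the observation that all large factors satisfy $M(p)\geq N/2$ so that $\sum_p|\cD_{M(p),3}|\leq\tfrac{2}{3}|\cD_{N,3}|(1+o(1))$, and the contraction argument $L\leq\tfrac23L\Rightarrow L=0$ for $L=\limsup\delta_N/|\cD_{N,3}|$ are all fine. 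But the two lemmas that carry the entire proof are asserted, not proved, and you say so yourself. (i) The product cycle-spectrum lemma needs a genuine argument: the snake construction on $G\,\square\,L_k$ only gives lengths $\equiv$ certain residues per band, and showing that pentagon detours fill each band of width $v(G)$ up to $O(1)$ exceptions requires the Hamilton cycle of $G=\cG_M$ to be routed through a pentagonal face in a controlled way; this is plausible but is a statement about Hamilton cycles of associahedra that nobody has established. (ii) The splicing step is the real obstruction. It is not enough that ladders exist between $P_i$ and $P_j$ (they do, in the form $A\times E(\cG_{N+1-j})$ for the set $A$ of left parts with the right secondary ear); you must route the Hamilton cycle of each interior piece through \emph{two} prescribed interface edges simultaneously, and — much worse — at the frontier piece you need a cycle of length \emph{exactly} $r$ that also contains a prescribed ladder edge. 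That is a far stronger property than ``$r$ is in the spectrum of $P_{\pi(t+1)}$,'' and your error accounting ($\delta_{M(p)}+O(v(H))$ missed lengths per piece) silently assumes it costs nothing.

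Two smaller points. First, the patch for tiny $r$ (``shift to one fewer full piece'') does not work as stated: $\ell=\sigma_{t-1}+(|P_{\pi(t)}|+r)$ requires a cycle of length exceeding $|P_{\pi(t)}|$ inside $P_{\pi(t)}$, which does not exist; you need instead a detour argument that deletes a few vertices from a full piece, and this again interacts with the ladder constraints. Second, the degenerate pieces $P_2,P_3,P_{N-2},P_{N-1}$ (one factor trivial or an edge) do not fit the product lemma's hypotheses and must be treated separately, though this is routine. In summary: this is a credible research programme for the conjecture, with correct asymptotic scaffolding, but the heart of the matter — prescribing edges in Hamilton cycles of product facets and in cycles of prescribed length — is exactly the part left open, so the conjecture remains unproved.
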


Baur, Bergerova, Voon and Xu~\cite{baur-et-al:24} recently introduced another family of flip graphs on triangulations in which the triangles are colored, not the vertices.
The resulting graphs are disconnected in general, and their structure is still not very well understood  (in~\cite{MR1932681} these graphs are related to the famous Four Color Theorem).

\bibliographystyle{alpha}
\bibliography{refs}

\end{document}